\setlist[enumerate,1]{label=(\arabic*), ref=(\arabic*), itemsep=0em}
\newcommand{\Hilb}{{\rm{Hilb}}}
\newcommand{\Slip}{{\rm{Slip}}}
\newcommand{\Satbar}{\overline{\rm{Sat}}}
\newcommand{\Aut}{{\rm Aut}}
\newcommand{\Spec}{{\rm{Spec}}\,}
\newcommand{\Supp}{{\rm{Supp}}\,}
\newcommand{\Hom}{{\rm{Hom}}}
\newcommand{\Ext}{{\rm{Ext}}}
\newcommand{\QQ}{{\mathbb Q}}
\newcommand{\PP}{{\bf P}}
\newcommand{\CC}{{\mathbb C}}
\newcommand{\ZZ}{{\mathbb Z}}
\newcommand{\OO}{{\mathcal{O}}}
\newcommand{\onto}{\twoheadrightarrow}
\DeclareMathOperator{\Id}{Id}
\def\Gr #1#2{{\mathbb G}\left(#1,#2\right)} %Grassmannian
\newcommand{\cI}{{\mathcal I}}
\newcommand{\Hvanilla}{\Hilb_{n}(\PP^{n-1})}
\newcommand{\Hgood}{\Hilb^{\rm{good}}_{n}(\PP^{n-1})}
\newcommand{\HgoodGor}{\Hilb^{\rm{good}, \rm{Gor}}_{n}(\PP^{n-1})}
\newcommand{\VPSsbl}{{\mathrm{VPS}}^{\mathrm{sbl}}(Q, H)}
\newcommand{\VPSuns}{{\mathrm{VPS}}^{\mathrm{uns}}(Q, H)}
\newcommand{\VPSgood}{{\mathrm{VPS}}^{\mathrm{good}}(Q, H)}
\newcommand{\VPS}{\mathrm{VPS}(Q, H)}
\newcommand{\VAPS}{\mathrm{VAPS}(Q, n)}
\newcommand{\VPSn}{\mathrm{VPS}(Q, n)}
\newcommand{\VPSf}{\mathrm{VPS}(Q, 4)}
\newcommand{\VSPn}{\mathrm{VSP}(Q, n)}
\newcommand{\Syz}{\mathrm{Syz}_n}%
\newcommand{\GL}{\mathrm{GL}}%
\newcommand{\PGL}{\mathrm{PGL}}%
\newcommand{\SL}{\mathrm{SL}}%
\newcommand{\sat}{\mathrm{sat}}%
\newtheorem{lemma}{\bf Lemma}[section]
\newtheorem{proposition}[lemma]{\bf Proposition}
\newtheorem{theorem}[lemma]{\bf Theorem}
\newenvironment{customthm}[1]
  {\innercustomthm}
  {\endinnercustomthm}
\newtheorem{corollary}[lemma]{\bf Corollary}
\newtheorem{example}[lemma]{\bf Example}
\newtheorem{remark}[lemma]{\bf Remark}
\title{The variety of polar simplices II}
\author[Joachim Jelisiejew]{Joachim Jelisiejew}
\address{Faculty of Mathematics\\ Informatics
and Mechanics\\ University of Warsaw\\
Banacha 2, Warsaw, Poland}
\email{j.jelisiejew@uw.edu.pl}
\author[Kristian Ranestad]{Kristian Ranestad}
\address{Matematisk institutt\\
         Universitetet i Oslo\\
         PO Box 1053, Blindern\\
         NO-0316 Oslo\\
         Norway}
\email{ranestad@math.uio.no}
\author[Frank-Olaf Schreyer]{Frank-Olaf Schreyer}
\address{Mathematik und Informatik\\
                    Universität des Saarlandes\\
                    D-66123 Saarbrücken\\
                    Germany}
\email{schreyer@math.uni-sb.de}
\subjclass[2020]{14J45,14M} \keywords{Apolar Ideal, Fano n-folds, Quadric, polar simplex, syzygies}
\thanks{JJ is supported by National Science Centre grant 2020/39/D/ST1/00132. This work is partially supported by  the Thematic Research Programme "Tensors: geometry, complexity and quantum entanglement", University of Warsaw, Excellence Initiative – Research University and the Simons Foundation Award No. 663281 granted to the Institute of Mathematics of the Polish Academy of Sciences for the years 2021-2023.}
\date{\today{}}
\newcommand{\kk}{k}
\begin{document}
\begin{abstract}
    We discuss the space $VPS(Q,H)$ of ideals with Hilbert function
    $H=(1,n,n, \ldots )$ that are apolar to a full rank quadric $Q$.
    We prove that its components of saturated ideals are closely related to the locus of
    Gorenstein algebras and to the Slip component in border apolarity. We also
    point out an important error in~\cite{RS} and provide the necessary
    corrections.
\end{abstract}

\maketitle%

\section{Introduction}

For a fixed nondegenerate quadric $q$ in $n$ variables, a polar simplex is a
tuple $[\ell_1], \ldots ,[\ell_n]\in \mathbb{P}^{n-1}$ of points, where
$\ell_i$ are such that $q = \ell_1^2 + \ldots + \ell_n^2$. The locus of all
polar simplices is a principal homogeneous space for the group $SO(q) \simeq SO_n$.
Informally speaking, the variety of polar simplices is a compactification of
this locus. As we discuss below, there is actually more than one possible compactification and
choosing the correct one is subtle. The variety is important for two
main reasons. First, it is a ``simplest'' example of a variety of sums of
powers~\cite{ranestad_schreyer_VSP}, which are useful as examples of special
projective varieties and for applications in tensors~\cite{Huang_Michalek_Ventura,
Gallet_Ranestad_Villamizar, Bolognesi_Massarenti, Ranestad_Voisin, Carlina_Catalisano_Oneto,
BB}. Second, as explained below, it serves as a mean to investigate the
Gorenstein locus of the Hilbert scheme of points: it has smaller dimension,
which is important for computations such as~\cite{Ilten} and at the same time
contains each abstract Gorenstein subscheme of degree $n$.

Let $S = \kk[x_1,  \ldots , x_n]$ be a polynomial ring which we view as a
homogeneous coordinate ring of the $\PP^{n-1}=\PP(S_1^{*})$. Fix a number $d$ and a nonzero  $f\in
S_d^{*}$, then $F=\{f=0\}\subset \check{\PP}^{n-1}$. A finite subscheme $\Gamma\subseteq \PP^{n-1}$ is \emph{apolar}
to $F$ if $[f]$ lies in the span of the $d$-uple reembedding of $\Gamma$ in $\PP(S_d^{*})$. Apolarity
may also be formulated algebraically, see~\S\ref{sec:apolarity}, by requiring
that $I_{\Gamma} \subseteq f^{\perp}$, the ideal of forms in $S$ that annihilates $f$ by differentiation.
This condition may be formulated for ideals in general; an ideal $I\subset S$ is {\em apolar} to $F$ if $I \subseteq f^{\perp}$.

Consider the special case $d = 2$ and $F = Q=\{q=0\}$ being a full rank quadric.
Let $\VPSn\subseteq \Hvanilla$ denote the closure of the locus of
degree $n$ zero-dimensional schemes $\Gamma \subseteq \PP^{n-1}$
satisfying $I_{\Gamma} \subseteq q^{\perp}$. The
result~\cite[Corollary~2.2]{RS} and \cite[Proposition~6.5]{Skjelnes_Stahl}
both state that every $\Gamma\in \VPSn$ is apolar to $Q$.
It is not so, as the following example shows.
\begin{example}\label{ex:nonsaturatedlimit,n=4}
Let $n=4$, let $q = y_1y_4 +
y_2y_3$, where $y_i$ is the dual to $x_i$, and consider the ideal
\[
    I = (x_3x_1 - x_2^2,\, x_2x_3 - x_1x_4,\, x_2x_4,\, x_3x_4,\, x_4^2,\, x_3^2).
\]
This ideal is saturated and has Hilbert function $H_{S/I}(i) = 4$ for every
$i\geq 1$, hence $I = I_{\Gamma}$ for a finite, degree $4$ subscheme $\Gamma
\subseteq \PP^3$; in particular $[\Gamma]\in \VPSf$. Consider now a
$\kk^*$-action on $\PP^3$ corresponding to the grading by $(0, 1, 0, 1)$. The
quadric $Q$ is a semi-invariant for this action, so for every $\lambda\in
\kk^*$, the subscheme $\Gamma_{\lambda} := \lambda\cdot \Gamma$ corresponds to
a point $[\Gamma_{\lambda}]$ {in} $\VPSf$ {with} ideal given by
\[
    I(\Gamma_\lambda) = (x_3x_1 - \lambda^2x_2^2,\, \lambda(x_2x_3 - x_1x_4),\,
    \lambda^2x_2x_4,\, \lambda x_3x_4,\,
    \lambda^2x_4^2,\, x_3^2).
\]
The limit of those ideals at $\lambda\to 0$, taken degree by degree, is the ideal
\[
    I_{\lim} := (x_{2}^{4},\,x_{4}^{2 },\,x_{2}x_{4},\,x_{3}x_{4},\,x_{1}^{2}x_{4
    },\,x_{2}x_{3}-x_{1}x_{4},\,x_{3}^{2},\,x_{1 }x_{3}),
\]
whose saturation is $I_{\lim}^{\sat} = (x_3, x_4, x_2^4)$. Let $\Gamma_0 =
V(I_{\lim}^{\sat})$, then $\Gamma_0$ is a limit of $\Gamma_{\lambda}$ for $\lambda\to
0$, so $[\Gamma_0]\in \VPSn$ while $I_{\Gamma_0}=I_{\lim}^{\sat}$ is not contained in $q^\perp$, so $\Gamma_0$ is not apolar to
$Q$.
\end{example}
The example implies in particular that several of the main results in
\cite{RS} are wrong as stated.
The purpose of this article is to both discuss what can salvaged and how and,
which is of independent interest, show unexpected connections between
$\VPSn$ and the moduli of Gorenstein algebras.

Weronika Buczy\'{n}ska and Jaros{\l}aw Buczy\'{n}ski introduced the multigraded
Hilbert scheme into the subject of apolarity and
observed that it is better than the usual Hilbert scheme also when one
considers VPS~\cite[\S7.6]{BB}.

The multigraded Hilbert scheme $\Hilb^H$, defined
in~\cite{Haiman_Sturmfels__multigraded}, parametrizes homogeneous ideals with a given Hilbert function $H$.
For a fixed form $f$, the condition   $I\subset f^\perp$ for an
ideal $I$ is a \emph{closed} condition in the multigraded Hilbert scheme.
Example~\ref{ex:nonsaturatedlimit,n=4} above shows that the apolarity condition $I_{\Gamma}\subset
f^{\perp}$ is \emph{not closed} in the usual Hilbert scheme. Hence, it is more
natural to work in the multigraded Hilbert scheme.

We consider the Hilbert function $H:=(1,n,n,\ldots )$ and, for the full rank
quadric $Q$, define
$$\VPS=\{I\in \Hilb^H| I\subset q^\perp\}\subset \Hilb^H.$$
This is a closed subscheme of $\Hilb^H$.
Let $\VPSgood\subseteq \VPS$ be the locus of saturated ideals. This
locus is
open by~\cite[Theorem~2.6]{JM}. Let $\VPSsbl$ be its closure, so $\VPSsbl$ is
a union of irreducible components of $\VPS$. Let $\VPSuns = \VPS
\setminus \VPSgood$ with its reduced scheme structure.
In analogy with the locus of smoothable schemes in the Hilbert scheme, we say that $\VPSsbl$  is the locus of ideals that are \emph{{\bf S}atura{\bf BL}e}, this is consistent with the notation of~\cite{JM} thanks to
Corollary~\ref{ref:saturableAndApolarImpliesSaturableInApolar:cor}.
The intersection $\VPSsbl\cap\VPSuns$ is the locus of {\em unsaturated limit
ideals}: apolar unsaturated ideals that are limits of saturated apolar ideals.

Associating to each ideal $I\in \VPS$ the space $I_2$ of quadrics in the ideal defines a forgetful map
$$\pi_G:\VPS\to \Gr{\binom{n}{2}}{q^\perp_2},$$
into the Grassmannian of $\binom{n}{2}$-dimensional subspaces in $q^\perp_2$.
We denote
\begin{align*}
    \VPS_G:=& \pi_G(\VPS)\\
    \VPSgood_G:=& \pi_G(\VPSgood)\\
    \VPSsbl_G:=& \pi_G(\VPSsbl)\\
    \VPSuns_G:=& \pi_G(\VPSuns).
\end{align*}
In this paper we will be concerned with these loci.
The map $\pi_G$ restricted to $\VPSgood$ is
an isomorphism (see Proposition~\ref{prop:goodlocusInGrassmannian}),
while the restriction to the other loci, in general, is not.

The scheme $\VPS$ is different from $\VPSsbl$ in general, and similarly for $\VPS_G$ and $\VPSsbl_G$. For $n\leq 3$ the schemes
$\VPS$ and $\VPSgood$ coincide
and are isomorphic to $\VPS_G$, see Proposition~\ref{prop:regularity}. As soon as $n\geq 4$, both the
complement of $\VPSgood$ in $\VPSsbl$, i.e. the intersection $\VPSsbl\cap\VPSuns$, and the complement of $\VPSsbl$ in $\VPS$ {are} nonempty.
For $n=4$ the ideals of the intersection $\VPSsbl\cap\VPSuns$ have an unexpected beautiful connection to
the geometry of the inverse quadric $Q^{-1}=\{q^{-1}=0\}$. The latter defines the collineation $q^{-1}:S^*_1\to S_1$ inverse to the symmetric collineation defined by $q:S_1\to S^*_1$.

\begin{example}{\rm
    For $n=4$, the loci are summarized in Figure~\ref{figure:nFour}.
    In this case, $\VPSuns$ is given by ideals $I_\Gamma\cap q^\perp$,
    where $\Gamma \subseteq L$ is finite of length $4$ and $L$ is a line in $\PP^3$.
    The intersection $\VPSsbl\cap \VPSuns$ is $5$-dimensional and consists of ideals $I_\Gamma\cap q^\perp$ where
    $\Gamma \subseteq L$ and $L$ is a line in the quadric
    $Q^{-1}$ inverse to $Q$, see
    Section~\ref{section:unsat}.}
\end{example}

For $n\geq 6$ the locus $\VPSgood \subseteq \VPSsbl$ is singular (because
there are obstructed
Gorenstein schemes of degree $n$, see Theorem~\ref{ref:introSmoothMap:thm}
below). For $n\leq 3$, the two loci coincide, are isomorphic to $\VPS_G$, and are smooth by Proposition~\ref{prop:regularity} and \cite[Proposition 10]{Mukai}. We prove:
%that, for $n\leq 5$, its two compactifications above are
%smooth.
\begin{theorem}[{Corollary~\ref{ref:VSPborderSmooth:cor},
    Corollary~\ref{ref:Fano:cor}}]\label{ref:introSmoothness:thm}
    For $n=4,5$ both $\VPSsbl$ and $\VPSsbl_G$ are smooth.
\end{theorem}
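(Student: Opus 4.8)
The plan is to prove smoothness of $\VPSsbl$ first, and then deduce smoothness of $\VPSsbl_G$ via the map $\pi_G$, using Remark~\ref{GoodIsomorphism} to the effect that $\pi_G$ is an isomorphism on $\VPSgood$ when $n=4,5$. For $\VPSsbl$ itself, the natural strategy is to exhibit $\VPSsbl$ — or rather its smooth open dense part $\VPSgood$ together with the boundary $\VPSsbl\cap\VPSuns$ — as (an open subset of, or a fibration over) a manifestly smooth variety. Since $\VPSgood$ is, up to $\pi_G$, the variety of sums of powers $\VSPn$ for the generic quadric, and for $n=4$ this is classically known to be smooth (indeed $\VPSf$ is, by Mukai's work and by explicit analysis in part~I, a smooth Fano variety — this is what Corollary~\ref{ref:Fano:cor} presumably supplies), the real content is that \emph{adjoining the unsaturated limit locus does not create singularities}. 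So I would:

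First, handle $\VPSgood$: invoke \cite[Proposition~10]{Mukai} together with the results of part~I to record that $\VPSgood$ is smooth of the expected dimension for $n=4,5$, and that $\pi_G|_{\VPSgood}$ is an isomorphism onto its image (Remark~\ref{GoodIsomorphism}). Second, identify the boundary $\VPSsbl\cap\VPSuns$ explicitly: by the Example following the introduction (for $n=4$ it is the $5$-fold of ideals $I_\Gamma\cap q^\perp$ with $\Gamma$ a length-$4$ scheme on a line $L\subseteq Q^{-1}$), and by the analogous description in Section~\ref{section:unsat} for $n=5$. This locus is itself smooth, being fibered over a smooth parameter space of pairs (line in $Q^{-1}$, degree-$n$ divisor on it). Third — the crux — show that $\VPSsbl$ is smooth \emph{along} this boundary. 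Here I would compute the tangent space $T_{[I]}\VPS = \Hom_S(I, S/I)_0 \cap (\text{apolarity constraint})$ at a boundary point $[I]$ and check that its dimension equals $\dim \VPSsbl$; equivalently, show the obstruction space (a suitable $\Ext^1$ or the quotient by the image of tangent vectors coming from deformations) vanishes. Concretely, for $n=4$ at a point $[I_\Gamma\cap q^\perp]$ with $\Gamma\subseteq L\subseteq Q^{-1}$, one writes down generators of $I$ (the line $L$ gives two linear forms $\ell_1,\ell_2\in I_1$... except $H_1=n$ forces $I_1=0$, so rather $I_2$ contains the $\binom{n-1}{2}$ quadrics through $L$ modulo the quartic condition, plus one more quadric), and counts the homomorphisms to $S/I$ respecting the grading, verifying that the count matches $\dim \VPSsbl = \dim \VPSgood$.

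The main obstacle I anticipate is precisely the tangent-space computation at the unsaturated limit points: unlike the saturated (scheme-theoretic) points, where deformation theory of finite subschemes in $\PP^{n-1}$ is classical and $\Hom(I_\Gamma, S/I_\Gamma)$ is transparent, at an unsaturated ideal $I = I_\Gamma \cap q^\perp$ the module $S/I$ has an extra embedded component, and one must carefully separate the tangent directions that preserve apolarity (landing in $\VPS$) from those that do not, and among the former, separate those that stay in $\VPSsbl$ (smoothing the embedded component back into a genuine scheme on a \emph{moving} line) from those obstructed toward $\VPSuns$. I expect that the condition ``$L\subseteq Q^{-1}$'' is exactly what makes the relevant first-order deformation unobstructed — geometrically, a line on $Q^{-1}$ can be deformed within $Q^{-1}$ to a line meeting $Q^{-1}$ in the expected way, allowing $\Gamma$ to spread off the line — and the bulk of the work is a Macaulay2-assisted verification (for generic $q$, hence for all full-rank $q$ by the transitive $SO(q)$- and $\GL$-actions) that the tangent space at such points has the right dimension, with a semicontinuity argument to pass from the explicitly checked examples to all of $\VPSsbl\cap\VPSuns$. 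Once smoothness of $\VPSsbl$ is established, smoothness of $\VPSsbl_G$ follows since $\pi_G$ restricted to the dense open $\VPSgood$ is an isomorphism and $\VPSsbl_G = \overline{\pi_G(\VPSgood)}$; one checks separately that no singularities are introduced on $\VPSuns_G = \pi_G(\VPSsbl\cap\VPSuns)$, which for $n=4,5$ reduces to the same boundary analysis since $\pi_G$ is finite (indeed birational) there.
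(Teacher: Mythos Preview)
Your proposal has a genuine gap at the crux step. You plan to compute $T_{[I]}\VPS$ at a boundary point and verify it has dimension $\dim\VPSsbl$. But for $n=4$ this cannot succeed: the boundary $\VPSsbl\cap\VPSuns$ is $5$-dimensional, while $\VPSuns$ itself is $8$-dimensional (see Figure~1) and $\VPSsbl$ is $6$-dimensional, so at any boundary point $[I]$ we have $\dim T_{[I]}\VPS\geq 8+6-5=9>6$. The whole difficulty is that $\VPSsbl$ is defined as a \emph{closure} and has no functorial tangent space; you acknowledge the need to ``separate those that stay in $\VPSsbl$\ldots from those obstructed toward $\VPSuns$'' but give no mechanism for doing so.

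The paper's mechanism is the orthogonalization theorem (Theorem~\ref{thm:orthogonalization}): it yields a \emph{smooth} morphism $\GL_n\times\VPSsbl\to\Satbar^H=\Slip^H$ (Proposition~\ref{ref:smoothness} with $\mathcal{Z}=\Slip^H$), so smoothness of $\VPSsbl$ at $[I]$ is equivalent to smoothness of $\Slip^H$ at $[I]$. This removes the apolarity constraint entirely. Now $\Slip^H$ is also a closure, and indeed $T_{[I]}\Hilb^H$ is too big; the paper bounds $T_{[I]}\Slip^H$ by intersecting $T_{[I]}\Hilb^H$ with the tangent space to the fibre of $\Hilb^H\to\Hvanilla$, whose intersection with $\Slip^H$ is an explicit small Grassmannian by Proposition~\ref{prop:necessaryForLimitOfSat} and the results of~\cite{JM}. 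The excess then drops out exactly for $n=4,5$ (Propositions~\ref{prop:SlipSmooth}--\ref{prop:SlipSmoothSecondCase}).

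Your deduction of smoothness of $\VPSsbl_G$ from that of $\VPSsbl$ is also flawed. First, you invoke Remark~\ref{GoodIsomorphism} circularly: in the paper that remark is a \emph{consequence} of the smoothness of $\VPSsbl_G$. Second, $\pi_G$ is far from finite on the boundary: for $n=4$ it collapses the $5$-dimensional locus $\VPSsbl\cap\VPSuns$ to the two curves $C_1\cup C_2$, so smoothness of the source says nothing about the target. The paper instead reuses orthogonalization to get a smooth map $\GL_n\times\VPSsbl_G\to\pi_G(\Slip^H)$ and then checks smoothness of $\pi_G(\Slip^H)$ at boundary points by reducing via torus degenerations to finitely many explicit ideals and computing with \texttt{VersalDeformations} (Propositions~\ref{prop:GrassSlipSmooth}--\ref{prop:GrassSlipSmoothFive}).
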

There is a map from the multigraded Hilbert scheme to the Hilbert scheme. For
$n\leq 13$ it sends the scheme $\VPSsbl$ to the variety of sums of powers $\VSPn$. 
For $n\geq 4$ we do not know whether $\VSPn$ is smooth; as noted above, the Hilbert scheme
compactification seems to be not as interesting as the other two: being apolar is not a
closed property in the Hilbert scheme so $\VSPn$ has no functorial interpretation, see
Remark~\ref{ref:VSPinHilb:remark} for a bit more discussion.

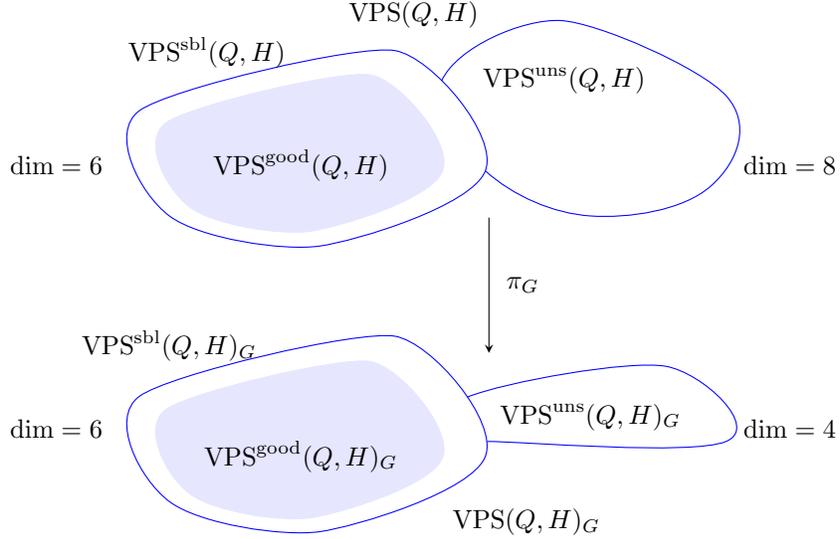
\begin{figure}\label{figure:nFour}
\noindent\begin{tikzpicture}[x=0.5cm,y=0.5cm]
    \draw[blue](0.703982,4.444432) .. controls (1.510387,5.0357957) and
    (6.51877,6.215773) .. (7.503982,6.044431950460819) .. controls (8.489194,5.8730907)
    and (10.262961,3.7777777) .. (9.903982,2.8444319) .. controls (9.545003,1.9110863)
    and (6.4939313,0.9858533) .. (5.503982,0.84443194) .. controls (4.5140324,0.7030106)
    and (2.303982,1.0444319) .. (1.503982,1.644432) .. controls
    (0.703982,2.244432) and
    (-0.102423,3.8530684) .. (0.703982,4.444432);
    \draw[fill, blue!10,xscale=0.8, yscale=0.8,xshift=15,yshift=10](0.703982,4.444432) .. controls (1.510387,5.0357957) and
    (6.51877,6.215773) .. (7.503982,6.044431950460819) .. controls (8.489194,5.8730907)
    and (10.262961,3.7777777) .. (9.903982,2.8444319) .. controls (9.545003,1.9110863)
    and (6.4939313,0.9858533) .. (5.503982,0.84443194) .. controls (4.5140324,0.7030106)
    and (2.303982,1.0444319) .. (1.503982,1.644432) .. controls
    (0.703982,2.244432) and
    (-0.102423,3.8530684) .. (0.703982,4.444432);
    \draw[blue] (8.743982,5.244432) .. controls (9.151196,6.1388593) and
    (10.905364,6.896991) .. (11.903982,6.844431950460819) .. controls
    (12.9026,6.791873) and (15.6313095,5.584372) .. (16.303982,4.844432) ..
    controls (16.976654,4.1044917) and (16.651392,3.108796) ..
    (15.903982,2.444432).. controls (15.156572,1.7800682) and
    (13.701767,1.5779129) .. (12.703982,1.644432) .. controls
    (11.706197,1.710951) and (10.647276,2.1754673) .. (9.903982,2.8444319);
    \draw[-stealth] (10, 1.6) -- (10, -2);
    % bottom
    \draw[blue] (0.703982,-3.1555681) .. controls (1.510387,-2.5642045) and
    (6.51877,-1.3842269) .. (7.503982,-1.5555680495391813) .. controls
    (8.489194,-1.7269093) and (10.262961,-3.8222225) .. (9.903982,-4.755568)
    .. controls (9.545003,-5.688914) and (6.4939313,-6.6141467) ..
    (5.503982,-6.755568) .. controls (4.5140324,-6.8969893) and
    (2.303982,-6.555568) .. (1.503982,-5.955568) .. controls
    (0.703982,-5.355568) and (-0.102423,-3.7469318) .. (0.703982,-3.1555681);
    \draw[fill, blue!10,xscale=0.8, yscale=0.8,xshift=15,yshift=-17.5] (0.703982,-3.1555681) .. controls (1.510387,-2.5642045) and
    (6.51877,-1.3842269) .. (7.503982,-1.5555680495391813) .. controls
    (8.489194,-1.7269093) and (10.262961,-3.8222225) .. (9.903982,-4.755568)
    .. controls (9.545003,-5.688914) and (6.4939313,-6.6141467) ..
    (5.503982,-6.755568) .. controls (4.5140324,-6.8969893) and
    (2.303982,-6.555568) .. (1.503982,-5.955568) .. controls
    (0.703982,-5.355568) and (-0.102423,-3.7469318) .. (0.703982,-3.1555681);
    \draw[blue] (9.423982,-3.1755681) .. controls (10.427059,-2.7709527) and
    (13.719198,-2.1817827) .. (14.703982,-2.355568049539181) .. controls
    (15.688766,-2.5293534) and (17.227058,-3.9709527) .. (16.303982,-4.355568)
    .. controls (15.380905,-4.7401834) and (10.903982,-4.355568) ..
    (9.95,-4.355568);
    \node at (10.9, -0.2) {$\pi_G$};
    \node at (8, 7) {$\VPS$};
    \node at (11, -6.5) {$\VPS_G$};
    \node at (5, -4.75) {$\VPSgood_G$};
    \node at (5, 3) {$\VPSgood$};
    \node at (2.5, 6) {$\VPSsbl$};
    \node at (-1.5, 3) {$\dim = 6$};
    \node at (18, 3) {$\dim = 8$};
    \node at (-1.5, -4) {$\dim = 6$};
    \node at (18, -4) {$\dim = 4$};
    \node at (1.5, -1.8) {$\VPSsbl_G$};
    \node at (12, 5.25) {$\VPSuns$};
    \node at (12.7, -3.7) {$\VPSuns_G$};
\end{tikzpicture}
\caption{Moduli spaces for $n=4$.}
\end{figure}

\subsection{Connections to the Gorenstein locus}
A saturated apolar ideal of a finite scheme is locally Gorenstein \cite[Proof of Proposition 2.2]{BB10}.
{Let $\Hgood\subset \Hvanilla$ be the locus of  $\Gamma\subseteq \PP^{n-1}$ which satisfy
$h^1(\cI_{\Gamma}(1)) = 0$.
The locus $\VPSgood\subset \VPSsbl$ of saturated ideals is closely connected to the locus $\HgoodGor$
of locally Gorenstein $\Gamma$ in $\Hgood$.}

\begin{theorem}[Proposition~{\ref{smoothMap}}]\label{ref:introSmoothMap:thm}
    The natural map
    \[
        \GL_{n}\times \VPSgood\to \Hgood;\quad (g,I)\mapsto g\cdot I
    \]
    is smooth and its image
    is $\HgoodGor$. In particular, the singularity types encountered on
    $\VPSgood$ and on $\HgoodGor$ coincide.
\end{theorem}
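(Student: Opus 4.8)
The plan is to prove the two assertions separately: first describe the image as a set, and then use that description to realise the natural map as a composition of a torsor and a smooth morphism. Throughout, let $q$ be a full rank quadratic form with $Q=\{q=0\}$, so that $\VPSgood$ consists of the saturated ideals $I\subseteq q^{\perp}$ with $I\in\Hilb^H$, $H=(1,n,n,\ldots)$, and such an $I$ is recovered from the finite subscheme $V(I)=\Proj(S/I)$, which is zero-dimensional of degree $n$.

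\textbf{The image.} If $I\in\VPSgood$, then $I$ is saturated and apolar to $Q$, so $S/I$ is locally Gorenstein by \cite[Proof of Proposition~2.2]{BB10}; moreover saturation together with $\dim(S/I)_1=n$ forces $S_1\xrightarrow{\ \sim\ }H^0(\cO_{V(I)}(1))$, i.e.\ $V(I)$ is linearly normal. Since ``locally Gorenstein'' and ``linearly normal'' are $\GL_n$-invariant conditions, the image is contained in $\HgoodGor$. Conversely, let $\Gamma\in\HgoodGor$. Linear normality propagates to all degrees, the section ring $\bigoplus_{d}H^{0}(\cO_{\Gamma}(d))$ being generated over $H^{0}(\cO_{\Gamma})$ by $H^{0}(\cO_{\Gamma}(1))$, so $I_{\Gamma}\in\Hilb^H$; then $(I_{\Gamma})_{2}^{\perp}\subseteq S_{2}^{*}$ is the $n$-dimensional span of the second Veronese image of $\Gamma$, and $I_{\Gamma}$ is apolar to a quadratic form $\psi$ exactly when $\psi$ lies in this span. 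Writing $\Gamma=\coprod_{i}\Spec A_{i}$ with each $A_{i}$ local Gorenstein and choosing linear functionals $\phi_{i}\colon A_{i}\to\kk$ not vanishing on the socle, the symmetric form $(a,b)\mapsto\sum_{i}\phi_{i}(a_{i}b_{i})$ on $H^{0}(\cO_{\Gamma})\cong S_{1}$ is nondegenerate, hence is a full rank element of $(I_{\Gamma})_{2}^{\perp}$; so $I_{\Gamma}$ is apolar to a full rank quadric, and transitivity of $\GL_n$ on full rank quadrics puts $\Gamma$ in the image. Hence the image equals $\HgoodGor$.

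\textbf{Smoothness.} I would not argue by a tangent-space computation, since neither $\VPSgood$ nor $\HgoodGor$ is smooth for $n\ge 6$; instead I would factor the map through the incidence variety
\[
    \mathcal Z=\{(\Gamma,Q')\in\HgoodGor\times\mathcal Q^{\circ}\ :\ I_{\Gamma}\text{ is apolar to }Q'\},
\]
where $\mathcal Q^{\circ}\subseteq S_2^{*}$ is the space of full rank quadratic forms, with projections $p\colon\mathcal Z\to\HgoodGor$ and $r\colon\mathcal Z\to\mathcal Q^{\circ}$. The universal family over $\HgoodGor$ is finite flat, so the pushforward of $\cO(2)$ is a rank $n$ vector bundle $\mathcal E$; the evaluation $S_{2}\otimes\cO\to\mathcal E$ is surjective because each $\Gamma$ is $2$-normal, and dualizing gives a rank $n$ subbundle $\mathcal E^{*}\subseteq S_{2}^{*}\otimes\cO$ whose fibre over $\Gamma$ is the span of its second Veronese image. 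Then $\mathcal Z$ is the complement, inside the total space of $\mathcal E^{*}$, of the vanishing locus of the determinant; by the first part each fibre of $\mathcal E^{*}$ contains a full rank quadric, so the determinant vanishes along a relative hypersurface, whence $\mathcal Z$ is open in a vector bundle over $\HgoodGor$ and $p$ is smooth and surjective. On the other hand $\mathcal Q^{\circ}\cong\GL_n/\mathrm{O}(Q)$ and $r$ is $\GL_n$-equivariant, so $\mathcal Z\cong\GL_n\times^{\mathrm{O}(Q)}r^{-1}(Q)$, and $r^{-1}(Q)\cong\VPSgood$ via $I\mapsto V(I)$ — here one uses that on the saturated locus the forgetful morphism $\VPSgood\to\Hgood$ is a locally closed immersion. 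The natural map therefore factors as
\[
    \GL_n\times\VPSgood\ \longrightarrow\ \GL_n\times^{\mathrm{O}(Q)}\VPSgood\ \cong\ \mathcal Z\ \xrightarrow{\ p\ }\ \HgoodGor,
\]
where the first arrow is an $\mathrm{O}(Q)$-torsor (hence faithfully flat and smooth) and the last is smooth, so the composite is smooth onto $\HgoodGor$. The final sentence of the theorem then follows, since a smooth surjection identifies complete local rings after adjoining power-series variables, and the smooth factor $\GL_n$ does not affect the singularity type of $\VPSgood$.

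\textbf{Main obstacle.} The delicate half is flatness, and the vector-bundle presentation of $\mathcal Z$ is exactly what supplies it; this rests on the two geometric inputs that every $\Gamma\in\HgoodGor$ is $2$-normal (a consequence of linear normality) and that the span of its second Veronese image genuinely contains a full rank quadric (the socle-pairing construction) — the latter is also what makes the image surject onto $\HgoodGor$. A secondary point that must be pinned down is the isomorphism $r^{-1}(Q)\cong\VPSgood$, i.e.\ that sending a saturated apolar ideal to its zero scheme is an isomorphism onto a locally closed subscheme of the Hilbert scheme; this uses saturation and the shape of $H$, and should follow from the openness statement \cite[Theorem~2.6]{JM} together with Gotzmann regularity for ideals with Hilbert function $(1,n,n,\ldots)$.
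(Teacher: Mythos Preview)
Your proof is correct. The image computation is essentially the paper's Proposition~\ref{ref:charOfPossiblyApolar:prop}, argued the same way via the socle pairing. For smoothness you take a genuinely different route from the paper. The paper proceeds via the infinitesimal lifting criterion: its Theorem~\ref{thm:orthogonalization} (``orthogonalization'') shows that over any square-zero extension $A\twoheadrightarrow A'$ a deformation of an apolar ideal can be brought back into $q^{\perp}$ by an element of $\GL_n(A)$ reducing to the identity, and Proposition~\ref{smoothMap} is then a one-line application of this together with local freeness of $(S/I)_2$ in families. Your argument is the global counterpart: you realize the incidence locus $\mathcal{Z}$ as an open subset of the total space of the rank-$n$ subbundle $\mathcal{E}^{*}\subseteq S_2^{*}\otimes\cO$ over $\HgoodGor$, so that the projection is visibly smooth, and then identify $\GL_n\times\VPSgood$ with the pullback of $\mathcal{Z}$ along the $\mathrm{O}(q)$-torsor $\GL_n\to\mathcal{Q}^{\circ}$. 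Both arguments rest on the same two inputs --- that apolarity to a quadric is a linear condition in degree~$2$, and that $\GL_n$ acts transitively on full-rank quadrics --- but yours packages them globally whereas the paper works infinitesimally. The payoff of the paper's formulation is that it is stated once, purely in degree~$2$ and for any $I$ with $(S_A/I)_2$ free, and then reused verbatim for arbitrary $\GL_n$-stable loci in $\Hilb^H$ (Proposition~\ref{ref:smoothness}) and for the Grassmannian compactification (Proposition~\ref{G-smooth}); your vector-bundle construction would adapt to those settings as well, but one would rebuild the incidence variety each time rather than cite a single lemma.
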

The bridge given by the theorem allows us to apply results from both sides.
First, we improve and make sharp the question about when $\VPSgood$ is
reducible, raised in~\cite[Theorem~1.3]{RS}.
\begin{corollary}[{Theorem~\ref{ref:reducibility:thm}}]
    The locus $\VPSgood$ is irreducible exactly when $n\leq 13$.
\end{corollary}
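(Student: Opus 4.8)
The key is the bridge of Theorem~\ref{ref:introSmoothMap:thm}: since the smooth map $\GL_n\times\VPSgood\to\Hgood$ has image $\HgoodGor$ and is in particular flat with irreducible fibres (the fibres are torsors under the stabilizer, hence connected), irreducibility of $\VPSgood$ is equivalent to irreducibility of $\HgoodGor$. So I would reduce everything to a statement about the locally Gorenstein, linearly normal locus in $\Hgood$. First I would spell out the equivalence: $\GL_n$ is irreducible, a smooth morphism is open, and an open map from an irreducible space has irreducible image; conversely, if $\HgoodGor$ were irreducible, then because $\GL_n\times\VPSgood\to\HgoodGor$ is smooth and surjective, every irreducible component of the source dominates $\HgoodGor$, and then a dimension/fibre-dimension count (the fibre dimension is constant, equal to $\dim\GL_n-\dim\HgoodGor+\dim\VPSgood$ along each component) forces the source — hence $\VPSgood$ — to be irreducible. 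Care is needed that $\VPSgood$ is known to be equidimensional or at least that no spurious lower-dimensional component can hide; but since the map is smooth of relative dimension equal to $\dim\GL_n$ minus the stabilizer dimension which is locally constant, each component of $\VPSgood$ has the same dimension, so this is fine.

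Next I would treat the two directions of the biconditional. For $n\le 13$: I need $\HgoodGor$ irreducible. Here I would invoke the known irreducibility of the Gorenstein locus of the Hilbert scheme of points in degree $n\le 13$ — this is exactly the regime where $\HGor$ is known to be irreducible (the punctual Gorenstein locus, and hence its smoothable closure, is irreducible for $n\le 13$ by the classification of short Gorenstein algebras / work of Casnati–Notari and the degree-$14$ obstruction of Iarrobino). The linearly normal locus $\HgoodGor\subseteq \HGor$ is open and dominated by the locus of general (e.g. reduced, in general position) Gorenstein schemes which is contained in the smoothable component, so irreducibility of $\HGor$ passes to $\HgoodGor$. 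For $n\ge 14$: the standard counterexample is a Gorenstein Artinian algebra of socle degree $3$ (a ``general cubic'' point) with Hilbert function $(1,6,6,1)$, which is obstructed / lies on a component of the Gorenstein locus different from the smoothable one; realizing it as a linearly normal scheme in $\PP^{n-1}$ for $n\ge 14$ (and producing, more generally for every $n\ge 14$, a second component by taking such a local Gorenstein scheme plus reduced points in general position) shows $\HgoodGor$ is reducible, hence $\VPSgood$ is reducible.

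The main obstacle is the second (reducibility) direction: I must check that the classical non-smoothable / extra-component Gorenstein schemes actually appear in $\Hgood$, i.e. can be made \emph{linearly normal} in $\PP^{n-1}$ for the relevant $n$, and that the resulting locus is genuinely a separate irreducible component rather than being absorbed into the smoothable one after the linear-normality constraint. Concretely: a local Gorenstein scheme $R$ with Hilbert function $(1,6,6,1)$ has degree $14$ and embeds linearly normally in $\PP^{13}$, giving a point of $\Hilb^{\mathrm{good},\mathrm{Gor}}_{14}(\PP^{13})$ on a component of dimension strictly larger (or at least different) from the smoothable one, by Iarrobino's dimension count $\dim\Ext^1 > $ expected; for $n>14$ one unions this scheme with $n-14$ reduced points in linearly general position. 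The bookkeeping that this union is linearly normal, that the parameter count beats (or differs from) the smoothable component's dimension, and that these loci are closed and irreducible of the claimed dimension, is the part requiring actual work; everything else is formal consequence of the smooth-map bridge together with cited results. I would organize the reducibility half as: (i) produce the local model and its $\Ext^1$ computation; (ii) produce the global family by adding general reduced points and check linear normality; (iii) compare dimensions with $\dim\Hgood$ along the smoothable locus to conclude the family lies on a distinct component; (iv) transport back via Theorem~\ref{ref:introSmoothMap:thm}.
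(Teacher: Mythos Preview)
Your overall strategy---reduce via the smooth map of Theorem~\ref{ref:introSmoothMap:thm} to a statement about $\HgoodGor$, then invoke known (ir)reducibility of the Gorenstein locus---matches the paper's. But your argument for the implication ``$\HgoodGor$ irreducible $\Rightarrow$ $\VPSgood$ irreducible'' has a genuine gap. The fibres of $\GL_n\times\VPSgood\to\HgoodGor$ are \emph{not} torsors under a stabilizer: over $[\Gamma]$ the fibre identifies with $\{g\in\GL_n : \Gamma\text{ apolar to }g\cdot q\}$, on which neither $\mathrm{Stab}(\Gamma)$ nor $O(q)$ acts transitively, and in any case the groups that do appear (orthogonal groups) are disconnected. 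Your fallback dimension count also fails: two equidimensional components can each dominate an irreducible target (two disjoint copies of $\mathbb{A}^1$ over $\mathbb{A}^1$). The missing ingredient is the \emph{connectedness} of $\VPSgood$, which the paper establishes separately (Proposition~\ref{connectednessOfVPSgood}). With both $\VPSgood$ and $\HgoodGor$ known to be connected, the paper argues that a connected scheme is irreducible iff no point lies on two irreducible components; this is a local property (a condition on the number of minimal primes of the local ring) and is preserved in both directions by smooth maps. Hence one space has such a point iff the other does.

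For the dichotomy $n\le 13$ versus $n\ge 14$, your plan is correct but reproves material already in~\cite{cjn13}. The paper inserts a cleaner intermediate step: it shows that $\HgoodGor$ is irreducible iff the full Gorenstein locus $\HGor$ is, again via connectedness together with the fact that the deformation theory of a finite scheme is independent of its projective embedding (so local irreducibility at $[\Gamma]$ depends only on the abstract $\Gamma$, and every $\Gamma$ admits a linearly normal reembedding). Then it simply cites \cite[Theorem~A]{cjn13} for the equivalence with $n\le 13$, avoiding your explicit construction of the $(1,6,6,1)$ example, the union with reduced points, and the dimension comparison altogether.
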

Second, we use the result~\cite[Corollary~5.16]{RS} to obtain properties of
the Gorenstein locus itself.
\begin{corollary}
    The Gorenstein locus of every $\Hilb_n(\PP^k)$ is reduced for every $n\leq
    6$.
\end{corollary}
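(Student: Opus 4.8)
The plan is to descend, by a chain of smooth equivalences, from the Gorenstein locus of an arbitrary $\Hilb_n(\PP^k)$ to $\HgoodGor$, and from there, via Theorem~\ref{ref:introSmoothMap:thm}, to $\VPSgood$, whose reducedness for $n\le 6$ is supplied by~\cite[Corollary~5.16]{RS}. First, the Gorenstein locus is open in $\Hilb_n(\PP^k)$, since in a flat family of finite schemes the socle dimension is upper semicontinuous, so ``socle dimension $\le 1$'' is an open condition. As reducedness is local and $\Hilb_n(\PP^k)$ is of finite type over the ground field, hence excellent, it suffices to show that $\widehat{\OO}_{\Hilb_n(\PP^k),[Z]}$ is reduced for every Gorenstein closed point $[Z]$.

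For this I would invoke the standard principle that, up to a formal power series ring, the completed local ring of the Hilbert scheme of points at $[Z]$ depends only on the abstract finite scheme $Z$. Indeed, $Z$ lies in an affine chart and splits into its local components, and for a component of embedding dimension $e$ one chooses coordinates so that $k-e$ of them restrict to the square of the maximal ideal; this exhibits the ideal as the ideal of the minimal embedding together with ``graph'' equations $x_j-f_j$, and the deformations in these extra directions are unobstructed because $N_{\AAA^{e}/\AAA^{k}}|_{Z}$ is a free $\OO_{Z}$-module, hence has vanishing $H^1$. Thus $\widehat{\OO}_{\Hilb_n(\PP^k),[Z]}$ is a formal power series ring over the completed local ring of the minimal model of $Z$. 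Since a Gorenstein Artinian algebra of length $n$ has embedding dimension at most $n-1$, the abstract scheme $Z$ also admits a linearly normal embedding $Z'\subseteq\PP^{n-1}$, with $[Z']\in\HgoodGor$; applying the principle to $Z'$ as well, $\widehat{\OO}_{\Hilb_n(\PP^k),[Z]}$ and $\widehat{\OO}_{\HgoodGor,[Z']}$ become isomorphic after adjoining formal variables. As adjoining formal variables preserves reducedness, and for the (excellent) local rings occurring here reducedness is equivalent to reducedness of the completion, $\Hilb_n^{\mathrm{Gor}}(\PP^k)$ is reduced at $[Z]$ iff $\HgoodGor$ is reduced at $[Z']$. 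Hence $\Hilb_n^{\mathrm{Gor}}(\PP^k)$ is reduced for every $k$ once $\HgoodGor$ is reduced (and conversely, every point of $\HgoodGor$ arising this way for $k=n-1$).

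To finish, Theorem~\ref{ref:introSmoothMap:thm} says $\GL_n\times\VPSgood\to\Hgood$ is smooth with image $\HgoodGor$. A smooth morphism onto its image is faithfully flat, and $\GL_n\times\VPSgood$ is reduced whenever $\VPSgood$ is (a product of a reduced scheme with a smooth variety); hence $\HgoodGor$ is reduced whenever $\VPSgood$ is. Finally $\VPSgood$ is indeed reduced for $n\le 6$: it is smooth for $n\le 3$ by Proposition~\ref{prop:regularity} and~\cite[Proposition~10]{Mukai}, smooth for $n=4,5$ by Theorem~\ref{ref:introSmoothness:thm}, and reduced for $n=6$ by~\cite[Corollary~5.16]{RS}. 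Chaining these implications proves the corollary.

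The main obstacle is the reduction in the second paragraph: one must set up the deformation theory carefully, checking not only that the extra embedding directions enlarge the tangent space by a free module with no $H^1$, but that they leave the obstruction maps unaffected, so that the splitting holds at the level of completed local rings and not merely of tangent spaces — and then that this correctly matches an arbitrary embedding of a Gorenstein $Z$ with its linearly normal embedding in $\PP^{n-1}$ through their common minimal model. The remaining ingredients — openness of the Gorenstein locus, faithful flatness of smooth surjections, and stability of reducedness under completion of excellent local rings and under formal power series extensions — are routine.
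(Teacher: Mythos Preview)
Your proof is correct and follows essentially the same route as the paper's: reduce from the Gorenstein locus of $\Hilb_n(\PP^k)$ to $\HgoodGor$ via the standard fact that the local deformation theory of a finite subscheme depends only on the abstract scheme (the paper simply cites \cite[p.4]{artin_deform_of_sings} for this), then pass to $\VPSgood$ via the smooth map of Theorem~\ref{ref:introSmoothMap:thm}, and finally invoke the reducedness of $\VPSgood$ for $n\le 6$. The only cosmetic difference is that you split the last step into smoothness for $n\le 5$ (via Theorem~\ref{ref:introSmoothness:thm}) plus \cite[Corollary~5.16]{RS} for $n=6$, whereas the paper cites \cite[Corollaries~5.12 and 5.16]{RS} uniformly; and you spell out the deformation-theoretic reduction and the descent of reducedness along the smooth map in more detail than the paper's three-sentence proof.
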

\begin{proof}
    Reducedness of a point $[Z\subseteq \PP^k]\in \Hilb_n(\PP^k)$ depends only
    on the underlying $Z$, so $\Hilb_n(\PP^k)$ is reduced if and only if
    $\Hgood$ is reduced. By Theorem~\ref{ref:introSmoothMap:thm} this holds if
    and only if $\VPSgood$ is reduced. The reducedness of the latter is proven
    in~\cite[Corollaries~5.12 and 5.16]{RS}.
\end{proof}
Theorem~\ref{ref:introSmoothMap:thm} extends also to the whole ${\VPSsbl}$. The
locus of saturated ideals in $\Hilb^H$ is open and its closure is called
$\Satbar^H$. This closure is a union of components of $\Hilb^H$, for $n\leq 13$ it is a single
component, called $\Slip^H$. This locus has recently gained much attention thanks
to applications in border apolarity~\cite{BB, JM}. The theorem extends as
follows.
\begin{theorem}[Proposition~\ref{ref:smoothness}, Corollary~\ref{ref:saturableAndApolarImpliesSaturableInApolar:cor}, Corollary~\ref{ref:VSPborderSmooth:cor}]\label{ref:introSmoothMapWhole:thm}
    The natural map
    \[
        \GL_{n}\times {\VPSsbl}\to \Satbar^H;\quad (g,I)\mapsto g\cdot I
    \]
    is smooth.
    {For $n\leq 5$}, {$\VPSsbl$ is smooth and } the component $\Slip^H$ is smooth along the image of this map.
   
\end{theorem}
The component $\Slip^H$ is \emph{not} smooth in all points, not even for
$n=4$: the saturated ideal of a second-order neighbourhood of a point of $\PP^3$ yields a singular point of $\Slip^H$.

\subsection{Salvaging \cite{RS}}
In this section we report on the state of the results in the paper \cite{RS}.

Our notation differs from that of~\cite{RS}, we follow rather the notation
of~\cite{ranestad_schreyer_VSP}.
The paper~\cite{RS} uses ${\rm VPS}(Q,n)$ for the \emph{\textbf{V}ariety of
\textbf{P}olar \textbf{S}implices} as a subscheme of the Hilbert scheme;
what we call $\VSPn$ the \emph{\textbf{V}ariety of \textbf{S}ums of
\textbf{P}owers}. The
paper~\cite{RS} uses $\VAPS$ for the image in the Hilbert scheme of the scheme $\VPSsbl$. 

The crucial error is found in the proof of \cite[Corollary 2.2]{RS} that wrongly
claims that $${\rm VPS}^{\rm good}(Q,n) = {\rm VPS}(Q,n),$$ while we only have an open immersion $${\rm VPS}^{\rm good}(Q,n) \subseteq {\rm VPS}(Q,n).$$
We therefore discuss the statements that depend on this error.

\subsubsection{Introduction.} The second part of \cite[Theorem~1.1]{RS} asserts
that for $n\geq 6$ the variety $\VSPn$ is
singular, rational and $\binom{n}{2}$-dimensional. This is true (after
possibly reducing) with the same proof. The first part of Theorem~1.1 claims
that for $2\leq n\leq 5$ the variety $\VSPn$ is additionally smooth of Picard
rank $1$ and is Fano of index $2$. For $n=2,3$ we have $\VPS=\VPSsbl$ and $\pi_G$ is an isomorphism onto the image $\VPS_G$.  In particular both are isomorphic to $\VSPn$ and
the argument of~\cite{RS} is correct.
For $n=4,5$, we could ask the same question for $\VPS$ and $\VPSsbl$ and their images under $\pi_G$.
In both cases, by Theorem~\ref{ref:introSmoothness:thm}, $\VPSsbl$ 
is smooth and admits a birational morphism onto the smooth $\VPSsbl_G$, that contracts the boundary divisor $\VPSsbl\setminus \VPSgood$ (cf. Remark \ref{contraction}),  hence the
Picard rank of $\VPSsbl$ is at least two. For $\VSPn$ we do not know whether it is smooth,
but if it were,
see
Remark~\ref{ref:VSPinHilb:remark}, its Picard rank would also be at least two.
When replacing $\VPSsbl$ by the Grassmannian subscheme $\VPSsbl_G$, however, we salvage also the first part of \cite[Theorem~1.1]{RS}.
\begin{customthm}{1.1}[{Corollary~\ref{ref:Fano:cor}}]
    For $2\leq n\leq 5$ the Grassmannian subscheme $\VPSsbl_G$ is a smooth rational
    $\binom{n}{2}$-dimensional Fano variety of index $2$ and Picard number
    $1$.
\end{customthm}

The theorem \cite[Theorem~1.2]{RS} concerns $\VPSsbl_G$, the Grassmannian subscheme. We
were able to obtain the result of this theorem for $n=4$, with a correct degree, using a more nuanced machinery
of excess intersections. The case $n=5$ remains open. We refer to~\cite{RS}
for the notation regarding the Gauss map; we will not use it in the present
article.
\begin{customthm}{1.2}[{Proposition~\ref{ref:linearSection:prop}}]
    The variety $\VPSsbl_G$ contains the image $TQ^{-1}$ of the Gauss map. When
    $n=4$ the restriction of the Pl\"ucker line bundle generates the Picard
    group of $\VPSsbl_G$ and the degree is $362$.
\end{customthm}
The theorem \cite[Theorem~1.3]{RS} concerns the linear span of $\VPSsbl_G$, and is wrong.  The image $\VPSuns_G\cap \VPSsbl_G$ of unsaturated limit ideals does not lie in the span of $TQ^{-1}$ (loc.cit.).  Whether $\VPSsbl_G$ is a linear section of the 
Grassmannian therefore remains an open problem.  It is true for $n=3$, and we give a computational proof for $n=4$.

Finally, \cite[Proposition~1.4]{RS} remains correct with the same proof.
\subsubsection{Sections~2, 3, 4,5.}
The results of these sections are local and the proofs are not effected by
mistakes concerning the compactifications up to restricting to the locus of
linearly normal schemes. That is, up to replacing $\VAPS$ and ${\rm VPS}(Q,n)$ by 
$\VPSgood$ and its smoothable component, the statements are correct according to our
knowledge, except the final part of Corollary~2.2.

\subsubsection{Section 6}  The theorem \cite[Theorem~6.3]{RS} concerns the degree of the Grassmannian subscheme $\VPSsbl_G$.  The presence of unsaturated limit ideals in $\VPSsbl$ means that the claim of the theorem is wrong, instead the degree formula is a contribution in a computation of the degree of $\VPSsbl_G$ using excess intersection.
We show this in the case $n=4$, see Proposition~\ref{ref:linearSection:prop} and Remark \ref{excess}.

{ \subsection{Legend} In section \ref{sec:apolarity} we show that the locus $\VPSgood$ of saturated apolar ideals is irreducible if and only if the quadric $Q$ has rank at most $13$.  In the following section \ref{section:unsat}, we discuss the unsaturated ideals in the boundary $\VPSsbl\setminus \VPSgood$ and give a precise characterization when $n=4,5$. In section \ref{globalproperties} we give our results on global properties both of the multigraded Hilbert scheme compactification 
$\VPSsbl$ and of the Grassmannian compactification $\VPSsbl_G$ of $\VPSgood$.  The appendices \ref{sec:computerPart} and \ref{vsp4package} contain computer code \cite{Mac2S} in Macaulay2 \cite{M2},  used in our computational arguments and a result on $\VPSsbl_G$ {for} $n=4$, for which we only give a computational proof.
}
\subsection{Notation}
We let $k$ be a field of characteristic zero. 
Our computations, see Appendices \ref{sec:computerPart} an \ref{vsp4package}, are performed over $\QQ$. 
Let $$S = \kk[x_1, \ldots, x_n]\quad {\rm and} \quad T = \kk[y_1, \ldots ,y_n]$$ be polynomial
rings. We view $x_1, \ldots ,x_n$ and $y_1, \ldots ,y_n$ as dual bases of dual
spaces $S_1$ and $T_1$.  Differentiation defines bilinear pairings $S_e\times T_d\to T_{d-e}, d\geq e$ which induces isomorphisms $S_d\to T_d^*$ for any $d{\geq}0$.  In this sense we say that $T$ is dual to $S$.
We let $S$ be the coordinate ring of $\PP^{n-1}$, and $T$ be the coordinate ring of the dual space $\check{\PP}^{n-1}$.  Thus we may set $\PP^{n-1}=\PP(T_1)$ the projective space of $1$-dimensional subspaces in $T_1$.

For a homogeneous polynomial $f\in T_d\cong S_d^*$ let $f^{\perp} \subset S$ denote its apolar ideal.
For a subscheme $\Gamma \subset \PP^{n-1}$ let $I_{\Gamma}\subset S$ denote is homogeneous ideal; note that this ideal is saturated.
We say that an ideal $I\subseteq S$ is \emph{apolar} to $F(=\{f=0\}$ if $I\subseteq
f^{\perp}$. We say that a zero-dimensional $\Gamma \subset \PP^{n-1}$ is
\emph{apolar} to $F$ if its homogeneous ideal is apolar, that is, if $I_{\Gamma} \subset f^{\perp}$. 
We usually denote quadratic forms in $T$ by $q$; it defines a quadric $Q = \{q=0\} \subset \check\PP^{n-1}$ and a collineation: $q:S_1\to T_1$. When $q$ is nondegenerate, then denote by $q^{-1}\in S_2$ the quadratic form that defines the inverse collineation,
$q^{-1}:T_1\to S_1,$ and by $Q^{-1}=\{q^{-1}=0\}\subset \PP^{n-1}$ the corresponding quadric.

We also identify $q\in T_2$ with the
associated map $q\colon S_2\to k$.
If $q\in T_2$ is a full rank quadric, then $q^\perp\subset S$ is generated by degree two
elements and $q^\perp_2 = \ker(q\colon S_2\to k)$, so an ideal $I$ is apolar to $Q$  if and only if $I_1=0$ and
$I_2\subset \ker(q\colon S_2\to k)$. A subscheme $\Gamma$ as above is apolar to $Q$ if and only if its homogeneous ideal $I_{\Gamma}$ is apolar.  

{
\subsection*{Acknowledgements}
We thank Tomasz Ma{\'n}dziuk and Emanuele Ventura for pointing out an issue in an earlier version of the paper, and anonymous referees for helpful comments on presentation.
}

\section{Good points of VPS}\label{sec:apolarity}

The following result essentially appears in~\cite{RS}, but due to its
importance we provide a full proof.
\begin{proposition}[{\cite[Lemma~2.6]{RS}}]\label{ref:charOfPossiblyApolar:prop}
    Let $\Gamma \subset \PP^{n-1}$ have length $n$. The following are
    equivalent
    \begin{enumerate}
        \item\label{it:locGorenstein} $\Gamma\subset \PP^{n-1}$ is (locally) Gorenstein and linearly normal,
        \item\label{it:quadricExists} there exists a full rank quadric $Q$ such that $\Gamma$ is
            apolar to $Q$.
    \end{enumerate}
\end{proposition}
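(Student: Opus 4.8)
The plan is to prove the two implications separately, using the standard dictionary between finite Gorenstein schemes and apolar forms (Macaulay duality).

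\textbf{From \eqref{it:quadricExists} to \eqref{it:locGorenstein}.} Suppose $\Gamma$ is apolar to a full rank quadric $Q = \{q = 0\}$, so $I_\Gamma \subseteq q^\perp$. Since $q$ has full rank, $S/q^\perp$ is a Gorenstein Artinian algebra with Hilbert function $(1,n,1)$, hence of length $n+2$. The inclusion $I_\Gamma \subseteq q^\perp$ gives a surjection $S/I_\Gamma \twoheadrightarrow S/q^\perp$. Here $S/I_\Gamma$ is the homogeneous coordinate ring of $\Gamma$, which is one-dimensional. The key point is a degree count: since $\Gamma$ has length $n$, its Hilbert function $H_{S/I_\Gamma}$ is bounded by $(1,n,n,\ldots)$ and stabilizes at $n$; comparing with $H_{S/q^\perp} = (1,n,1,0,\ldots)$ forces, in degree $1$, that $(I_\Gamma)_1 = 0$, i.e.\ $\Gamma$ spans $\PP^{n-1}$, and that $(I_\Gamma)_1 = (q^\perp)_1 = 0$ means $h^1(\cI_\Gamma(1)) = h^0(\OO_{\PP^{n-1}}(1)) - h^0(\OO_\Gamma(1)) + \dim(I_\Gamma)_1 \cdot(\ldots)$; more cleanly, $h^0(\cI_\Gamma(1)) = \dim (I_\Gamma)_1 = 0$ and $h^0(\OO_\Gamma(1)) = H_{S/I_\Gamma}(1) = n = h^0(\OO_{\PP^{n-1}}(1))$, so from the sequence $0 \to \cI_\Gamma(1) \to \OO(1) \to \OO_\Gamma(1) \to 0$ we get $h^1(\cI_\Gamma(1)) = 0$; thus $\Gamma$ is linearly normal. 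For local Gorensteinness: this is where I invoke \cite[Proof of Proposition 2.2]{BB10} as recalled in the excerpt — a finite scheme admitting a saturated apolar ideal to a form is locally Gorenstein. Concretely, apolarity $I_\Gamma \subseteq q^\perp$ together with linear normality lets one, at each point $p \in \Gamma$, localize $q$ to a dual socle generator of the local Artinian ring $\OO_{\Gamma, p}$ (after passing to the affine chart and using that $q$ restricted near $p$ generates the relevant inverse system), so each local ring has one-dimensional socle, i.e.\ is Gorenstein.

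\textbf{From \eqref{it:locGorenstein} to \eqref{it:quadricExists}.} Suppose $\Gamma$ is locally Gorenstein and linearly normal. Working locally at each point $p_i$ of $\Gamma$, the Artinian Gorenstein local ring $A_i = \OO_{\Gamma, p_i}$ has, by Macaulay duality, a dual socle generator; concretely, choosing affine coordinates centered at $p_i$, the ideal of $A_i$ equals $g_i^\perp$ for some polynomial $g_i$ in the dual variables. I want to assemble these local data into a single quadric $q \in T_2$ (homogeneous of degree $2$, full rank) with $I_\Gamma \subseteq q^\perp$. The natural candidate: linear normality means the length-$n$ scheme $\Gamma$ imposes independent conditions on linear forms, so $h^0(\OO_\Gamma(2)) = ?$ — actually we need the stronger input that $\Gamma$ lies on no quadric might fail, but we only need \emph{some} quadric; the precise statement is that the restriction map $S_2 \to H^0(\OO_\Gamma(2))$ together with the trace/socle pairing produces $q$. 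The cleanest route: consider $H^0(\OO_\Gamma(2))^* $; each local factor being Gorenstein gives a socle functional $\lambda_i \colon A_i \to k$, and the sum of these (suitably normalized, using a local trivialization of $\OO_\Gamma$ near each point) defines a linear functional $q \colon S_2 \to k$, i.e.\ $q \in T_2$, whose apolar ideal in degrees $\geq 1$ contains $I_\Gamma$ precisely because killing $(I_\Gamma)_d$ under each $\lambda_i$ is exactly the Gorenstein condition. Full rank of $q$ (i.e.\ $(q^\perp)_1 = 0$) is forced by linear normality: if some linear form $\ell$ annihilated $q$, then $\ell$ would vanish on all of $\Gamma$'s dual socle generators, contradicting that $\Gamma$ spans.

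\textbf{Main obstacle.} The hard part is the construction of the \emph{global homogeneous} quadric $q$ of full rank in the $(\ref{it:locGorenstein}) \Rightarrow (\ref{it:quadricExists})$ direction: one has local Gorenstein socle generators at each point, but gluing them into a single degree-$2$ form requires (i) choosing compatible affine charts / trivializations so that the local inverse systems sit inside one homogeneous inverse system of degree $2$, and (ii) verifying that the resulting $q$ is genuinely full rank, not just nonzero. The linear normality hypothesis is exactly what is needed for (ii), and for (i) the point is that a Gorenstein local algebra of embedding dimension $\leq 1$ in each "vertical" direction — more precisely, that the Hilbert function of $S/I_\Gamma$ being $(1,n,n,\ldots)$ combined with local Gorensteinness forces each local socle degree to be $\leq 1$ in the appropriate sense — I would need to argue carefully, likely by reducing to the statement that $\dim_k (S/(I_\Gamma + \mathfrak{m}_{p_i}^{\text{suitable}}))$ behaves as expected. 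I expect this to mirror \cite[Lemma~2.6]{RS} closely, with the linear-normality bookkeeping via the exact sequence $0 \to \cI_\Gamma(j) \to \OO(j) \to \OO_\Gamma(j) \to 0$ being the technical heart.
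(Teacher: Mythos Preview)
Your overall strategy matches the paper's, but you have overcomplicated the direction \ref{it:locGorenstein} $\Rightarrow$ \ref{it:quadricExists} and left it incomplete. The paper avoids all local-to-global gluing by working directly with the \emph{global} algebra $A = H^0(\OO_\Gamma)$: locally Gorenstein is equivalent to $\omega_A = \Hom_k(A,k) \simeq A$ as $A$-modules, so a single generator $\varphi \in \omega_A$ gives a perfect symmetric pairing $(a_1,a_2) \mapsto \varphi(a_1a_2)$ on $A$. Linear normality says $S_1 \to A$ is an isomorphism, so transporting this pairing to $S_1$ produces a full-rank $q \in T_2$. Apolarity is then automatic: $q$ is the composite $S_2 \to A \xrightarrow{\varphi} k$, and $(I_\Gamma)_2$ lies in the kernel of $S_2 \to A$, so $(I_\Gamma)_2 \subseteq q^\perp_2$; since $(I_\Gamma)_1 = 0$ and $q^\perp$ is generated in degree $2$, this gives $I_\Gamma \subseteq q^\perp$.

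Your ``main obstacle'' --- fitting local socle generators into a single homogeneous degree-$2$ inverse system --- is a red herring. The local dual socle generators $g_i$ can have arbitrary degree and need not be compatible with the grading; the paper never touches them. The quadric does not arise by homogenizing local data but simply as $\varphi$ precomposed with multiplication $S_1 \times S_1 \to A$. Your ``sum of local socle functionals $\lambda_i$'' is, after normalization, exactly a global generator $\varphi$ of $\omega_A$ (since $\omega_A = \prod_i \omega_{A_i}$), so you were close --- but recognizing this as a single global object eliminates the gluing problem entirely. For the converse direction, your linear-normality argument is fine, but your Gorenstein argument is only a citation; the paper runs the same construction backwards: $q$ induces $\varphi\colon A \to k$ via $S_2 \to A$, full rank forces $S_1 \hookrightarrow A$ hence $S_1 \simeq A$, so the pairing $\varphi(a_1a_2)$ is perfect, i.e.\ $\varphi$ generates $\omega_A$, i.e.\ $A$ is Gorenstein.
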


\begin{proof}
    Let $A = H^0(\OO_{\Gamma})$.
    Suppose~\ref{it:locGorenstein} holds, so the dualizing $A$-module $\omega_A
    = \Hom_A(A, k)$ is isomorphic to $A$. Let $\varphi\in \omega_A$ be its generator,
    then the pairing $A\times A\to k$ given by $(a_1, a_2)\mapsto
    \varphi(a_1a_2)$ is symmetric and perfect. Since $\Gamma\subset \PP^{n-1}$ is linearly
    normal, the restriction of linear forms $S_1\to H^0(\OO_{\Gamma}(1))
    \simeq A$ is an isomorphism and we obtain a symmetric perfect pairing $S_1\times
    S_1\to k$. Let $q\in T_2$ be the corresponding quadric. It has full rank.
    Moreover, it arises from the following commutative diagram
    \[
        \begin{tikzcd}
            & S_1 \times S_1 \ar[d]\ar[r] & H^0(\OO_{\Gamma}(1)) \times
            H^0(\OO_{\Gamma}(1)) \ar[d] \ar[r, "\simeq"] & A\times A\ar[d] \\
            (I_{\Gamma})_2 \ar[r, hook] & S_2 \ar[r] & H^0(\OO_{\Gamma}(2)) \ar[r, "\simeq"] & A
            \ar[r, "\varphi"]& k
        \end{tikzcd}
    \]
    so $q^\perp \supset (I_{\Gamma})_2$, hence $\Gamma$ is apolar to
    $Q$.

    Assume~\ref{it:quadricExists}. Consider the diagram as above, where
    $\varphi\colon A\to k$ is the functional induced by $q\colon S_2\to k$.
    Since $q$ has full rank, the natural map $S_1\to A$ has to be injective,
    so an isomorphism by comparing dimensions. This shows that $A \times A\to
    k$ from the diagram is a perfect pairing, hence $\omega_A$ is isomorphic
    to $A$, hence $\Gamma$ is locally Gorenstein.
\end{proof}

Recall that $\Hilb^H$ is the multigraded Hilbert scheme parameterizing ideals with Hilbert function $H = (1,n,n,n, \ldots )$.
Let
$\VPSgood\subseteq \Hilb^H$ denote the locus of ideals which are saturated and
apolar to $Q$. Recall that we have a natural map $\Hilb^H\to
\Hilb_n(\PP^{n-1})$.
Let $\Hgood\subseteq \Hilb_n(\PP^{n-1})$ denote the open
locus of $\Gamma$ with $H^1(I_{\Gamma}(1)) = 0$; equivalently those are the
subschemes which span $\PP^{n-1}$, or, in yet other words, the subschemes
with $H_{S/I_{\Gamma}} = H$.
\begin{proposition}\label{ref:twoEmbeddings:prop}
    The locus $\VPSgood$ is locally closed in $\Hilb^H$. Under the map
    $\Hilb^H\to \Hilb_{n}(\PP^{n-1})$ it maps isomorphically onto the
    locus of apolar subschemes in $\Hgood$.
\end{proposition}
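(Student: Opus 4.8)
The plan is to separately establish the two assertions: local closedness of $\VPSgood$ in $\Hilb^H$, and the claim that under $\Hilb^H \to \Hilb_n(\PP^{n-1})$ it maps isomorphically onto the locus of apolar subschemes inside $\Hgood$. For local closedness I would argue that $\VPSgood = \VPS \cap \{\text{saturated ideals}\}$; the subscheme $\VPS$ is closed in $\Hilb^H$ because $I \subseteq q^\perp$ is a closed condition (as recalled in the introduction, this amounts to $I_2 \subseteq \ker(q\colon S_2 \to k)$, a linear condition on the tautological family, and the higher-degree conditions are automatic once $I$ has Hilbert function $H$), while the locus of saturated ideals in $\Hilb^H$ is open by~\cite[Theorem~2.6]{JM}. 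Intersecting a closed with an open subscheme yields a locally closed subscheme, giving the first assertion.

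For the second assertion, recall that $\Hilb^H \to \Hilb_n(\PP^{n-1})$ sends an ideal $I$ to the subscheme $V(I^{\sat})$, or more precisely to the point of the Hilbert scheme determined by the saturation. First I would show the map sends $\VPSgood$ into $\Hgood$: if $I \in \VPSgood$ then $I$ is saturated with Hilbert function $H = (1,n,n,\ldots)$, so $H^1(\cI_{\Gamma}(1)) = 0$ for $\Gamma = V(I)$, i.e.\ $\Gamma \in \Hgood$; moreover $\Gamma$ is apolar to $Q$ since $I = I_\Gamma \subseteq q^\perp$. Conversely, if $\Gamma \in \Hgood$ is apolar to $Q$, then $I_\Gamma$ is saturated, contained in $q^\perp$, and has Hilbert function $H$ (linear normality forces $H_{S/I_\Gamma}(1) = n$, and degree $n$ together with $\Gamma$ zero-dimensional forces $H_{S/I_\Gamma}(i) = n$ for all $i \geq 1$), so $I_\Gamma$ defines a point of $\VPSgood$. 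Thus set-theoretically the image is exactly the apolar locus in $\Hgood$, and these two constructions are mutually inverse on points.

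To upgrade this bijection to an isomorphism of schemes I would exhibit a morphism in the other direction and check it is inverse. The key point is that over $\Hgood$ the universal family $\Gamma \subseteq \Hgood \times \PP^{n-1}$ is flat with $H^1$ of its twisted ideal sheaf vanishing fibrewise, so by cohomology and base change the pushforward of $\cI_\Gamma(i)$ is locally free of the expected rank for each $i$, and these assemble into a family of saturated homogeneous ideals with Hilbert function $H$ — this gives a morphism $\Hgood \to \Hilb^H$ by the universal property of the multigraded Hilbert scheme~\cite{Haiman_Sturmfels__multigraded}. Restricting to the apolar locus lands in $\VPSgood$ (the apolarity condition $I_{\Gamma,2} \subseteq \ker q$ is inherited), and one checks on points, or better by a functorial argument, that this is a two-sided inverse to the original map. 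The main obstacle I expect is the base-change/flatness bookkeeping needed to produce the morphism $\Hgood \supseteq (\text{apolar locus}) \to \VPSgood$ cleanly — i.e.\ verifying that the family of ideals one builds is genuinely a flat family over the base with the Hilbert function $H$ in every fibre, so that the multigraded Hilbert functor is represented; once that is in place, injectivity and surjectivity on points (already done above) plus the formal inverse make the isomorphism statement routine.
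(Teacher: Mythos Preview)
Your argument is correct. The first part (locally closed) matches the paper exactly: $\VPSgood$ is the intersection of the closed apolar locus $\VPS$ with the open saturated locus from~\cite[Theorem~2.6]{JM}.

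For the isomorphism, the paper takes a shorter route than you do. Rather than constructing the inverse morphism by hand via cohomology and base change, the paper invokes~\cite[Theorem~3.9]{JM}, which already states that the entire saturated locus $\Hilb^{H,\sat}\subseteq \Hilb^H$ maps isomorphically onto $\Hgood$ under $\Hilb^H\to\Hvanilla$. The proposition then follows immediately by restricting this isomorphism to the closed subscheme $\VPSgood\subseteq \Hilb^{H,\sat}$ and observing that its image is precisely the apolar locus in $\Hgood$. Your approach is essentially a sketch of how one would prove that cited theorem: the cohomology-and-base-change construction of a family of saturated ideals over $\Hgood$, followed by the universal property of $\Hilb^H$, is exactly the content of the result in~\cite{JM}. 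So your proof is more self-contained, while the paper's is a two-line restriction argument once the black box is in hand. The only place you should be slightly more careful is the claim that $H^1(\cI_\Gamma(i))=0$ for \emph{all} $i\geq 1$ (not just $i=1$); this follows because a general linear form is a nonzerodivisor on $S/I_\Gamma$, forcing $H_{S/I_\Gamma}$ to be nondecreasing and hence constant equal to $n$ from degree $1$ on.
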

\begin{proof}
    Let $\Hilb^{H, \sat}$ inside $\Hilb^H$ denote the subset of saturated ideals,
    this is an open subset~\cite[Theorem~2.6]{JM} and so comes with a natural scheme structure.
    The condition $I\subseteq q^\perp$ is closed in $\Hilb^H$ and so
    $\VPSgood$ is closed in $\Hilb^{H,\sat}$.
    By~\cite[Theorem~3.9]{JM} the subset
    $\Hilb^{H, \sat}$ maps isomorphically to $\Hgood$ under the natural map
    $\Hilb^H\to \Hilb_n(\PP^{n-1})$. Restricting this isomorphism to
    $\VPSgood$, we get the final claim.
\end{proof}

We now describe the topology of $\VPSgood$.
\begin{proposition}\label{connectednessOfVPSgood}
    The space $\VPSgood$ is connected.
\end{proposition}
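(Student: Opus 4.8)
The plan is to exhibit $\VPSgood$ as a quotient or a fiber-space-type object over a connected base, using Proposition~\ref{ref:twoEmbeddings:prop} to move the question to $\Hgood$, and then using the fact that any locally Gorenstein, linearly normal $\Gamma$ of length $n$ carries (after choice of generator of $\omega_A$) a full-rank quadric apolar to it. Concretely, I would first pass through the isomorphism of Proposition~\ref{ref:twoEmbeddings:prop}: $\VPSgood$ is the locus of apolar subschemes inside $\Hgood$, which by Proposition~\ref{ref:charOfPossiblyApolar:prop} is exactly the locus of $\Gamma\in\Hgood$ that are locally Gorenstein. But $\VPSgood$ by definition remembers the quadric $q$; so what I really want is: the locus $\{(\Gamma, q) : \Gamma\in\Hgood \text{ locally Gorenstein},\ q \text{ full rank},\ I_\Gamma\subseteq q^\perp\}$ is connected. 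The key observation is that for fixed such a $\Gamma$, the set of valid quadrics $q$ is a torsor-like set: by the proof of Proposition~\ref{ref:charOfPossiblyApolar:prop} it corresponds to the choice of a generator $\varphi$ of the free rank-one $A$-module $\omega_A$, i.e. to a point of $(\omega_A)_{\text{gen}}$, which is the complement of a hyperplane arrangement in the affine space $\omega_A\cong k^n$ (the non-generators are exactly those $\varphi$ killing some idempotent, a proper linear subspace), hence is connected (over a field of characteristic zero, $\AAA^n$ minus finitely many hyperplanes is irreducible, in particular connected).

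The second step is to connect the $\Gamma$'s among themselves. Here I would use that $\Hgood$ contains the open dense locus of reduced $\Gamma$ (distinct points spanning $\PP^{n-1}$), and more importantly that the locally Gorenstein locus of $\Hgood$ is connected. One clean way: every locally Gorenstein $\Gamma\in\Hgood$ lies in $\HgoodGor$, and by Theorem~\ref{ref:introSmoothMap:thm} the map $\GL_n\times\VPSgood\to\Hgood$ is smooth with image $\HgoodGor$; since $\GL_n$ is connected, $\HgoodGor$ is connected if and only if $\VPSgood$ is — so this route is circular and must be avoided. Instead I would argue directly on $\VPSgood$: it carries an action of $\PGL_n$ (or rather the subgroup stabilizing $q$, but better: let $q$ vary too). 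The cleanest packaging is to note that the \emph{total} space $\widetilde{\VPS}^{\mathrm{good}} := \{(\Gamma,q)\}$ as above maps to $\Hgood^{\mathrm{Gor}}$ (forgetting $q$) with connected fibers (the hyperplane-complement just described) and surjectively; and it maps to the space of full-rank quadrics (forgetting $\Gamma$), which is a single $\GL_n$-orbit, hence irreducible. Then $\VPSgood$ is a fiber of the second map, and all fibers are isomorphic (transitive $\GL_n$-action on the base), so $\widetilde{\VPS}^{\mathrm{good}}$ connected $\iff$ $\VPSgood$ connected. So it suffices to show $\widetilde{\VPS}^{\mathrm{good}}$ is connected; and since it fibers over $\Hgood^{\mathrm{Gor}}$ with connected fibers, it is enough to show $\Hgood^{\mathrm{Gor}}$ — the locally Gorenstein, linearly normal locus of the Hilbert scheme of $n$ points — is connected.

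The remaining and genuinely substantive step is the connectedness of $\Hgood^{\mathrm{Gor}}$. For this I would use that the generic point of every irreducible component of $\Hgood^{\mathrm{Gor}}$ can be degenerated, within $\Hgood^{\mathrm{Gor}}$, to a \emph{monomial} Gorenstein scheme supported at one point (an Artinian Gorenstein local algebra of length $n$, embedded spanning $\PP^{n-1}$): indeed a general point of any component has an Artinian reduction which is a limit of its own general form, and one can run a $\GL_n$-degeneration (a one-parameter subgroup acting on the apolar form $F\in T$) bringing $F$ to a monomial; throughout the degeneration the algebra stays Gorenstein (Gorenstein-ness of the apolar algebra $T/F^\perp$ only depends on $F\neq 0$) and, after composing with a generic coordinate change to keep it nondegenerate, stays in $\Hgood$. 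All such one-point monomial Gorenstein schemes are then connected to each other by connectedness of the space of monomials of a fixed socle degree together with the standard staircase moves, and all connect to the main (smoothable) component which contains reduced points, itself irreducible. Thus every component of $\Hgood^{\mathrm{Gor}}$ meets the smoothable component, giving connectedness. \textbf{The main obstacle} is making this last degeneration argument rigorous in families: one must produce an actual flat family inside $\Hgood^{\mathrm{Gor}}$ (not just inside $\Hilb_n$) connecting an arbitrary locally Gorenstein linearly normal $\Gamma$ to a distinguished one — the apolarity/Macaulay correspondence $F\leftrightarrow T/F^\perp$ handles the local Artinian case cleanly, but one must also handle $\Gamma$ with several non-reduced points and check the linear-normality ($H^1(\mathcal I_\Gamma(1))=0$) is preserved along the chosen path, which is where a careful choice of the torus/coordinate degeneration is needed. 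If one is willing to cite it, this last point is exactly the kind of statement for which the connectedness results on Gorenstein loci (e.g. via the apolarity action of $\GL_n$ on forms) in the literature on small Gorenstein schemes can be invoked directly.
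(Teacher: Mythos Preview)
Your reduction via the incidence space $\widetilde{\VPS}^{\mathrm{good}}=\{(\Gamma,q)\}$ is a genuinely different route from the paper's. The paper does not pass through $\HgoodGor$ at all: it argues directly that every $k$-point of $\VPSgood$ degenerates \emph{inside} $\VPSgood$ to a fixed one, citing~\cite[Section~6]{RS} and~\cite[Proposition~4.1]{hoyois2021hermitian} (the key being that such degenerations preserve the ``orientation'', i.e.\ the generator of $\omega_A$). Your approach is more structural but has two gaps.

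The first is small but real: the asserted equivalence ``$\widetilde{\VPS}^{\mathrm{good}}$ connected $\iff$ $\VPSgood$ connected'' does not follow from transitivity of $\GL_n$ on full-rank quadrics. The stabilizer of $q$ is $O(q)\cong O_n$, which has two components, so $\widetilde{\VPS}^{\mathrm{good}}\cong \GL_n\times^{O_n}\VPSgood$ and one only gets $\pi_0(\widetilde{\VPS}^{\mathrm{good}})=\pi_0(\VPSgood)/(\ZZ/2)$. A priori $\VPSgood$ could have two components swapped by a reflection. This is easy to repair (e.g.\ a reflection in $O(q)$ fixes a polar simplex, hence fixes its component), but it must be said.

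The second gap is more serious. Your proposed degeneration of the Macaulay dual form $F$ to a monomial does \emph{not} produce a flat family: the length of the apolar algebra $T/F^{\perp}$ is not constant as $F$ varies in a fixed degree (for example $y_1^2+y_2^2\rightsquigarrow y_1^2$ drops the length from $4$ to $3$), so the limit does not lie in $\Hilb_n$, let alone in $\HgoodGor$. Your fallback of citing the literature for the connectedness of $\HgoodGor$ is fine---indeed the paper itself invokes~\cite[Proposition~4.1]{hoyois2021hermitian} and~\cite{casnati_notari_irreducibility_Gorenstein_degree_9} for exactly this in the proof of Theorem~\ref{ref:reducibility:thm}---but then the degeneration in~\cite{hoyois2021hermitian} already takes place inside $\VPSgood$, so the detour through $\widetilde{\VPS}^{\mathrm{good}}$ and $\HgoodGor$ becomes unnecessary.
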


\begin{proof}
    This follows from the argument in~\cite[Section~6]{RS} which is correct
    if one restricts from $\VPS$ to $\VPSgood$.
    Essentially the same argument is given independently
    in~\cite[Proposition~4.1]{hoyois2021hermitian}: any $k$-point $\Gamma$ of
    $\VPSgood$ degenerates to a fixed one and this degeneration is possible
    inside $\VPSgood$ because it preserves \emph{orientation} of $\Gamma$, as
    defined in \cite[2.1]{hoyois2021hermitian}.
\end{proof}

Before we give more refined information about the topology, we need a
technical idea of orthogonalization, which links $\VPSgood$ and $\Hgood$.
\subsection{Orthogonalization}

The following key technical theorem shows that any
infinitesimal deformation of an apolar ideal can be ``orthogonalized'' to a
deformation in $q^\perp$. In essence, it says the following: suppose that
$I'\subseteq q^\perp$ is an apolar ideal and $I$ is its
deformation (in the multigraded sense) over a base with a nilpotent $\varepsilon$ so $I|_{\varepsilon=0} = I'$.
It may happen that $I$ does not lie in $q^\perp$, for
example when $I$ comes from a infinitesimal change of coordinates that moves
$q$. The theorem says that this is essentially the only possibility: there exists a
coordinate change $g = g(\varepsilon)$ so that $g\circ I$ is contained in the
ideal $q^\perp$.
It follows from the fact that the $\GL_n$-action on full
rank quadrics is transitive in the infinitesimal neighbourhood of $q$.

Let us introduce the setup. For a $k$-algebra $A$ let $S_A :=
S\otimes_{k} A \simeq A[x_1, \ldots ,x_n]$. For a surjection of $k$-algebras
$A\onto A'$ we get a surjection $S_{A}\onto S_{A'}$ which in down-to-earth
terms reduces the
coefficients of the polynomials modulo $J = \ker(A\to A')$.
Let $T$ be a polynomial ring dual to $S$ and let $q\in T_2 \subseteq (T_A)_2$
be a full rank quadric, as in the setup at the beginning of~\S\ref{sec:apolarity}. Let $q^\perp\subset S_A$ be its apolar ideal. We denote
by $q^\perp$ also its image in $S_{A'}$.
\begin{theorem}[orthogonalization]\label{thm:orthogonalization}
    Let $A\onto A'$ be a surjection of finite local $k$-algebras with kernel $J$ satisfying $J^2
    = 0$. Let $I' \subseteq S_{A'}$, $I\subseteq S_A$ be graded vector
    subspaces.
    Assume that
    \begin{enumerate}
        \item $I'_2 \subseteq q^\perp$,
        \item $(S_A/I)_2$ is a free $A$-module,
        \item the image of $I_2$ in $(S_{A'})_2$ is $I'_2$.
    \end{enumerate}
    Then there exists an invertible matrix $g\in \GL_n(A)$ such that $g\mod J =
    \Id_n$ and $(g\circ I)_2 \subseteq q^\perp$.
\end{theorem}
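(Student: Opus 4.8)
The plan is to reduce the statement to a transitivity property of the $\GL_n$-action on full-rank quadrics, in the first-order (square-zero) regime. Write $J = \ker(A \onto A')$, so $J^2 = 0$ and $J$ is a finite-dimensional $k$-vector space (being a finitely generated module over the finite local $k$-algebra $A$, killed by the maximal ideal up to the $A'$-structure — in fact $J$ is an $A'$-module). The key observation is that because $(S_A/I)_2$ is free over $A$ and reduces to $(S_{A'}/I')_2$ mod $J$, the subspace $I_2 \subseteq (S_A)_2$ is itself a free $A$-summand, and lifting a basis of $I'_2$ gives generators of $I_2$; so $I_2$ is determined by $I'_2$ together with a ``first-order correction'' living in $\Hom_k(I'_2, (S_{A'})_2/I'_2) \otimes_k J$. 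Dually, the failure of $I_2 \subseteq q^\perp$ is measured by the composite $I_2 \hookrightarrow (S_A)_2 \xrightarrow{q} A$, which vanishes mod $J$ by hypothesis (1), hence factors through a $k$-linear map $\delta \colon I'_2 \to J$, i.e. an element of $\Hom_k(I'_2, k)\otimes_k J$. The goal is to kill $\delta$ by a change of coordinates $g \equiv \Id_n \pmod J$.

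**The infinitesimal orbit computation.**

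An element $g = \Id_n + \varepsilon$ with $\varepsilon \in \mathfrak{gl}_n(k)\otimes_k J$ acts on $S$ and hence on $q \in T_2$; since $g \equiv \Id_n \pmod J$ and $J^2 = 0$, the action on $q$ is $q \mapsto q + \varepsilon \cdot q$ where $\varepsilon \cdot q \in T_2 \otimes_k J$ is the Lie-algebra action. The crucial linear-algebra fact is that the map $\mathfrak{gl}_n(k) \to T_2$, $\varepsilon \mapsto \varepsilon \cdot q$, is surjective onto $T_2$ when $q$ has full rank: indeed the symmetric part of $\mathfrak{gl}_n$ already surjects onto symmetric bilinear forms once $q$ is nondegenerate, since $q$ identifies $S_1 \cong T_1^{*}$ and the symmetrized action $\varepsilon \mapsto \varepsilon^{T} q + q \varepsilon$ hits everything. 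Equivalently, modifying $q$ to any nearby full-rank quadric is achieved by a coordinate change infinitesimally close to the identity. I would phrase this as: the $\GL_n$-action on the open subscheme of full-rank quadrics in $\PP(T_2)$ (or on $T_2 \setminus \{\det = 0\}$) is smooth and transitive, so over the square-zero extension $A \onto A'$ any deformation of $q$ that is trivial mod $J$ is realized by some $g \in \GL_n(A)$ with $g \equiv \Id_n \pmod J$.

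**Matching the correction terms.**

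Now I combine the two. Applying $g$ to $I$ changes both $q^{\perp}$ (equivalently, changes the pairing) and leaves the reduction mod $J$ untouched. Concretely, $(g\circ I)_2 \subseteq q^{\perp}$ iff the composite $(g \circ I)_2 \to (S_A)_2 \xrightarrow{q} A$ vanishes, and this composite equals $\delta + (\text{term coming from }\varepsilon\cdot q$ evaluated on $I'_2)$, a $k$-linear map $I'_2 \to J$. Using the surjectivity from the previous paragraph — applied after restricting the functional $q(\varepsilon \cdot q^{-1}(-))$ or more directly observing that the map $\mathfrak{gl}_n(k) \to \Hom_k((S_A)_2, A) \supseteq \Hom_k(I'_2, J)$ induced by $\varepsilon \mapsto (v \mapsto (\varepsilon\cdot q)(v))$ already hits all of $\Hom_k(I'_2, J)$ because $\varepsilon \mapsto \varepsilon\cdot q$ is onto $T_2 = (S_2)^{*}$ and restriction $(S_2)^{*}\otimes J \to (I'_2)^{*}\otimes J$ is onto — I can choose $\varepsilon$ so that the correction exactly cancels $\delta$. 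Setting $g = \Id_n + \varepsilon$ finishes the proof.

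**Main obstacle.**

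The routine parts are the bookkeeping of freeness and square-zero reductions (step 1) and the final cancellation (step 3). The heart of the argument, and the step I would be most careful with, is the surjectivity of $\varepsilon \mapsto \varepsilon \cdot q$ onto $T_2$ for full-rank $q$ and, relatedly, making precise that this is the \emph{only} obstruction — i.e. that every first-order deformation of $I$ away from $q^{\perp}$ is ``explained'' by moving $q$. This is exactly the smoothness/transitivity of the $\GL_n$-orbit of a full-rank quadric; I would state it cleanly (perhaps as: the stabilizer is $\mathrm{O}(q)$ and $\dim \GL_n - \dim \mathrm{O}(q) = \binom{n+1}{2} = \dim T_2$, together with the fact that the orbit is open, hence smooth), and then the functor-of-points argument over $A \onto A'$ is formal. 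One subtlety to watch: $J$ need not be principal, so one works with $\mathfrak{gl}_n(k)\otimes_k J$ throughout rather than a single $\varepsilon$; the surjectivity statement tensors up over $k$ without trouble since everything is $k$-linear.
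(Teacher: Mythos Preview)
Your proof is correct and rests on the same key input as the paper's: the smoothness of the $\GL_n$-orbit map at a full-rank quadric, equivalently the surjectivity of $\mathfrak{gl}_n \to T_2$, $\varepsilon \mapsto \varepsilon\cdot q$. The paper organizes the argument a bit differently: rather than measuring the defect $\delta\colon I'_2 \to J$ and cancelling it directly, the paper first constructs a quadric $\tilde q\in (T_A)_2$ with $\tilde q \equiv q \pmod J$ and $I_2 \subseteq \tilde q^\perp$ (by lifting the surjection $(S_{A'}/I')_2 \twoheadrightarrow (S_{A'}/q^\perp)_2$ to a map of free $A$-modules $(S_A/I)_2 \to (S_A/q^\perp)_2$ and dualizing), and then applies the infinitesimal lifting criterion to the smooth map $\GL_n \to T_2$ to produce $g \equiv \Id_n \pmod J$ with $g\cdot \tilde q = q$. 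Your explicit cancellation via $\mathfrak{gl}_n(k)\otimes_k J \twoheadrightarrow T_2\otimes_k J$ is exactly the content of that lifting step, unpackaged. The paper's route has the minor advantage of sidestepping the $k$-linear versus $A'$-linear bookkeeping where your write-up gets loose (the line ``$\Hom_k((S_A)_2, A) \supseteq \Hom_k(I'_2, J)$'' is not quite right as stated; what you actually need and use is that $\Hom_{A'}((S_{A'})_2, J) \to \Hom_{A'}(I'_2, J)$ is onto, which holds because $I'_2$ is a free $A'$-summand of $(S_{A'})_2$). In content the two arguments are the same.
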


\newcommand{\qtilde}{\widetilde{q}}
\begin{proof}
    The duality of $S$ and $T$ induces a duality of free $A$-modules $(S_A)_2$
    and $(T_A)_2$ and, as a consequence, the free $A'$-modules $(S_{A'})_2$
    and $(T_{A'})_2$.
    The space perpendicular to $q\in (T_A)_2$ is
   $q^\perp\subset (S_A)_2$.
    Since $S_{A'}/q^\perp$ is a free $A'$-module of rank one, the surjections 
    $$(S_{A'})_2\onto (S_{A'}/I)_2 \onto S_{A'}/q^\perp$$ dualize
    to a composition $A'\to (S_{A'}/I)_2^{\vee} \to (T_{A'})_2$ which sends
    $1$ to $q$.

    The $A$-modules $(S_A/I)_2$, $(S_A/q^\perp)_2$ are free, hence
    \[
        \Hom_{A}\left(\left(\frac{S_A}{I}\right)_2,
        \left(\frac{S_A}{q^\perp}\right)_2\right)\otimes A' \simeq
        \Hom_{A'}\left(\left(\frac{S_{A'}}{I'}\right)_2,
        \left(\frac{S_{A'}}{q^\perp}\right)_2\right).
    \]
    Since $I'_2 \subseteq q^\perp$,
    the right-hand side contains the natural surjection $(S_{A'}/I')_2\onto
    (S_{A'}/q^\perp)_2$. Let us lift it to a surjection $\varphi\colon
    (S_A/I)_2\to (S_A/q^\perp)_2$. The surjections
    \[
        \begin{tikzcd}
            (S_A)_2 \ar[r] & \left(\frac{S_A}{I}\right)_2 \ar[r, "\varphi"] &
            \left(\frac{S_A}{q^\perp}\right)_2
        \end{tikzcd}
    \]
    dualize to
    \begin{equation}\label{eq:twistedQuadric}
        A \to \left(\frac{S_A}{I}\right)^{\vee}_2 \to (T_A)_2.
    \end{equation}
    Let $\qtilde\in (T_A)_2$ denote the image of $1\in A$. Since the sequences
    for $I$ reduce to those for $I'$ modulo $J$, the quadric $\qtilde$
    satisfies $\qtilde \equiv q\mod J$. Let us identify quadrics with symmetric matrices.
    For an element %$g = \Id_n + F\in \GL_n(A)$,
    $g\in \GL_n(A)$,
    the action of $g$ on~\eqref{eq:twistedQuadric} yields
    \begin{equation}\label{eq:liftedQuadric}
        A \to \left(\frac{S_A}{g\circ I}\right)^{\vee}_2 \to (T_A)_2
    \end{equation}
    which maps $1\in A$ to the quadric $g\circ \qtilde$.
    We claim that there exists a $g$ such that $g\mod J = \Id_n$ and $g\circ \qtilde = q$.
    We view the vector space $T_2$ as an algebraic variety.
    Since $q$ has full rank, 
    the orbit map $\GL_n\to T_2\colon (h\mapsto h\circ q)$ is smooth. The quadric $\qtilde$ can be interpreted as a map
    $\qtilde\colon \Spec(A)\to T_2$. Let $\Spec(A')\to \GL_n$ be the constant map to the
    identity matrix. Since $\qtilde \equiv q\mod J$, we have a commutative diagram
    \[
        \begin{tikzcd}
            \Spec(A') \ar[d, hook]\ar[r] & \GL_n\ar[d, "\mathrm{smooth}"]\\
            \Spec(A) \ar[r, "\qtilde"] & T_2
        \end{tikzcd}
    \]
    From the infinitesimal lifting criterion~
\cite[{\href{https://stacks.math.columbia.edu/tag/02H6}{Tag 02H6}}]{stacks-project}, there is a lift $$
    \Spec(A)\to \GL_n\colon 1\mapsto h, \;{\rm with}\; h\circ q = \qtilde.$$ Taking $g = h^{-1}$ yields
    the desired element such that $g\circ \qtilde = q$.
    Dualizing~\eqref{eq:liftedQuadric} we get that $(g\circ I)_2 \subseteq
    q^\perp$.
\end{proof}

For a moment, we change focus from $\Hilb^H$ to $\Hgood\subseteq \Hvanilla$. By
Proposition~\ref{ref:twoEmbeddings:prop} the reader can choose one of two
perspectives: either think about $\Hgood$ or about the saturated locus of
$\Hilb^H$.
\begin{proposition}\label{smoothMap}
    The action map $\GL_{n} \times \VPSgood\to \Hgood$ is smooth and its
    image is the Gorenstein locus in $\Hgood$.
\end{proposition}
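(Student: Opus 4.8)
The plan is to deduce smoothness from the orthogonalization theorem (Theorem~\ref{thm:orthogonalization}), by checking the infinitesimal lifting criterion. Concretely, I would fix a square-zero extension $A \onto A'$ of finite local $k$-algebras with kernel $J$, a commutative square consisting of a map $\Spec(A') \to \GL_n \times \VPSgood$ and a map $\Spec(A) \to \Hgood$ agreeing on $\Spec(A')$ after composing with the action map, and produce a lift $\Spec(A) \to \GL_n \times \VPSgood$. Since $\GL_n$ is smooth, the $\GL_n$-factor of the $A'$-point lifts freely; using such a lift to act, one reduces to the case where the $A'$-point of $\GL_n$ is the identity and the whole problem is to lift a family $\Gamma_A$ over $\Spec(A)$ in $\Hgood$, whose special fiber over $A'$ is apolar (i.e.\ lies in $\VPSgood$), to a family in $\GL_n \times \VPSgood$. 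Equivalently, via Proposition~\ref{ref:twoEmbeddings:prop}, I identify $\VPSgood$ with the saturated apolar locus in $\Hilb^H$ and $\Hgood$ with $\Hilb^{H,\sat}$, so the problem becomes: given a flat family of saturated ideals $I \subseteq S_A$ with Hilbert function $H$ whose reduction $I' \subseteq S_{A'}$ satisfies $I'_2 \subseteq q^\perp$, find $g \in \GL_n(A)$ with $g \bmod J = \Id_n$ such that $(g \circ I)_2 \subseteq q^\perp$.

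The key point is that this is exactly the conclusion of Theorem~\ref{thm:orthogonalization}. I would verify its three hypotheses: (1) $I'_2 \subseteq q^\perp$ holds because the $A'$-point lies in $\VPSgood$; (2) $(S_A/I)_2$ is a free $A$-module because $I$ is a flat family with constant Hilbert function, so $(S_A/I)_2$ is locally free of rank $H(2) = n$ over the local ring $A$, hence free; (3) the reduction of $I_2$ is $I'_2$ by construction of the square. Applying the theorem produces the desired $g$, and then $g \circ I$ is a family of ideals over $\Spec(A)$ which is saturated (as $g$ is a coordinate change, it preserves saturatedness) and satisfies $(g \circ I)_2 \subseteq q^\perp$; since $q^\perp$ is generated in degree $2$ and $S_A/q^\perp$ is also flat with the right Hilbert function in each degree, one checks $(g \circ I)_d \subseteq q^\perp$ for all $d$, so $g \circ I$ defines a point of $\VPSgood(A)$. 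Combining $g^{-1}$ with this point gives the sought lift $\Spec(A) \to \GL_n \times \VPSgood$, and one checks it reduces correctly over $A'$ and maps to the given $A$-point of $\Hgood$ under the action map.

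For the identification of the image, I would argue both inclusions. Any $\Gamma$ in the image is $g \cdot \Gamma'$ for $\Gamma'$ apolar to $Q$; by Proposition~\ref{ref:charOfPossiblyApolar:prop}, $\Gamma'$ is locally Gorenstein and linearly normal, and both properties are preserved by the $\GL_n$-action, so $\Gamma$ is locally Gorenstein (and lies in $\Hgood$). Conversely, if $\Gamma \in \Hgood$ is locally Gorenstein then Proposition~\ref{ref:charOfPossiblyApolar:prop} furnishes a full rank quadric $Q_\Gamma$ to which $\Gamma$ is apolar; since $\GL_n$ acts transitively on full rank quadrics, there is $g$ with $g \cdot Q_\Gamma = Q$, and then $g \cdot \Gamma$ is apolar to $Q$, i.e.\ lies in $\VPSgood$, so $\Gamma = g^{-1} \cdot (g\cdot\Gamma)$ lies in the image.

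The main obstacle I anticipate is the bookkeeping in the reduction to the identity in the $\GL_n$-factor and in verifying that the lift produced is genuinely compatible with the original commutative square (not merely that it reduces correctly degree by degree) — i.e.\ checking functoriality of all the identifications, especially the passage between $\Hilb^H$ and $\Hilb_n(\PP^{n-1})$ via Proposition~\ref{ref:twoEmbeddings:prop}. The geometric input — transitivity of $\GL_n$ on full rank quadrics and its infinitesimal counterpart — is already packaged inside Theorem~\ref{thm:orthogonalization}, so the remaining work is organizational rather than deep.
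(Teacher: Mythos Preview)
Your proposal is correct and follows essentially the same approach as the paper: both reduce smoothness to the infinitesimal lifting criterion and supply the lift via Theorem~\ref{thm:orthogonalization}, with the image statement coming from Proposition~\ref{ref:charOfPossiblyApolar:prop} together with transitivity of $\GL_n$ on full rank quadrics. Your write-up is more explicit about the reduction to the identity in the $\GL_n$-factor and about the image identification (which the paper leaves implicit); note also that your worry about degrees $d\geq 3$ is unnecessary, since $q^{\perp}_d = S_d$ for $d\geq 3$.
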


\begin{proof}
    We view $\VPSgood$ as a locally closed subset of $\Hvanilla$, using
    Proposition~\ref{ref:twoEmbeddings:prop}.
    Let $\mathcal{U}\to\VPSgood$ be the restriction of the universal
    family. The points of $\VPSgood$ are linearly normal
    by Proposition~\ref{ref:charOfPossiblyApolar:prop}.
    By Cohomology and Base Change, the $\OO_{\VPSgood}$-module
    \[
        \left(\frac{S\otimes_{\CC}
        \OO_{\VPSgood}}{I_{\mathcal{U}}}\right)_2
    \]
    is locally free and its fiber over a point $[\Gamma]$ is $(S/I_{\Gamma})_2$.
    The claim now follows from Theorem~\ref{thm:orthogonalization} and the infinitesimal lifting criterion~\cite[\href{https://stacks.math.columbia.edu/tag/02H6}{Tag 02H6}]{stacks-project}.
\end{proof}

\begin{remark}
    As explained in the introduction,
    the result of~\cite[Corollary~5.16]{RS} yields interesting news about the
    point $\Spec(S/(y_1^2 + y_2^2 + y_3^2 + y_4^2)^{\bot})$, the so-called G-fat
    point, in $\Hilb_{6}^{\mathrm{Gor}}(\PP^{5})$.
\end{remark}

\begin{theorem}\label{ref:reducibility:thm}
    The scheme $\VPSgood$ is irreducible,
  when $n\leq 13$.
\end{theorem}

\begin{proof}
    First we show that $\Hgood$ is irreducible.  For this, note that 
    the deformation theory of a zero-dimensional
    subscheme is independent of its projective embedding, see for
    example~\cite[p.4]{artin_deform_of_sings}. The
    Gorenstein locus is connected,
    see~\cite[Proposition~4.1]{hoyois2021hermitian}
    or~\cite{casnati_notari_irreducibility_Gorenstein_degree_9}, and linear normality condition on the Gorenstein locus in $\Hgood$ is open in the Gorenstein locus $\Hvanilla$. 
    So when the latter is irreducible, so is the former.  But the latter is irreducible when $n\leq 13$ by \cite[Theorem~A]{cjn13}. 

    Finally we prove that $\VPSgood$ is irreducible if and only if $\Hgood$ is irreducible. 
     We observe
    that both spaces are connected (see
    Proposition~\ref{connectednessOfVPSgood}) and related by smooth maps
    \[
        \VPSgood \leftarrow \GL_n \times \VPSgood \to \HgoodGor,
    \]
    see Proposition~\ref{smoothMap}, hence one of them contains a point which
    lies on two irreducible components if and only if the other does.
\end{proof}

\section{Unsaturated limit ideals}\label{section:unsat}

Having discussed $\VPSgood$,
in the following we analyse the boundary $\VPSsbl \setminus \VPSgood$. We will
see that the boundary is nonempty exactly
when $n\geq 4$ and give a necessary condition for a point of $\VPS$ to lie in
$\VPSsbl$. We will classify the points of the boundary for $n=4$ and for $n=5$. Two key inputs for the classifications are the results of~\cite{JM} and the orthogonalization, see Proposition~\ref{ref:smoothness}. An important geometric consequence of these, which is new, is the connection with the inverse quadric $Q^{-1}$, see Corollary~\ref{kri}.

We begin with a
general observation.
\begin{proposition}\label{boundarydivisors}
    Each component of the boundary $\VPSsbl \setminus \VPSgood$ is a divisor
    in $\VPSsbl$.
\end{proposition}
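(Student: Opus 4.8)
The plan is to exhibit the boundary $\VPSsbl\setminus\VPSgood$ as the zero locus of a section of a line bundle on $\VPSsbl$ that does not vanish on $\VPSgood$, and then to invoke Krull's principal ideal theorem.

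First I would record when an ideal with our Hilbert function is saturated. For $I\in \Hilb^H$ the subscheme $\Gamma:=\Proj(S/I)\subseteq\PP^{n-1}$ has degree $n$ and $I^{\sat}=I_\Gamma$. The Hilbert function of $S/I_\Gamma$ is nondecreasing --- $\Gamma$ is zero-dimensional and $I_\Gamma$ is saturated, so a general linear form lies outside every associated prime of $S/I_\Gamma$ and is therefore a nonzerodivisor --- and it is bounded above by $\deg\Gamma=n$; hence it equals $(1,n,n,\ldots)$ as soon as it attains the value $n$ in degree one, and in that case $I=I^{\sat}$ by comparing Hilbert functions along $S/I\onto S/I^{\sat}$. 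Conversely, if $I$ is saturated then $(I_\Gamma)_1=I_1$ has dimension $\dim S_1-n=0$. Thus $I$ is saturated if and only if $\Gamma$ is linearly normal, that is $[\Gamma]\in\Hgood$. Writing $\rho\colon\VPSsbl\to\Hvanilla$ for the natural map, it follows, also using Proposition~\ref{ref:twoEmbeddings:prop}, that $\VPSsbl\setminus\VPSgood=\rho^{-1}(\Hvanilla\setminus\Hgood)$ and that $\rho(\VPSgood)\subseteq\Hgood$.

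Next I would build the line bundle on $\Hvanilla$. Let $p\colon\mathcal{Z}\to\Hvanilla$ be the universal family; it is finite and flat of degree $n$. Since $H^1(\OO_\Gamma(1))=0$ for every finite $\Gamma$, Cohomology and Base Change shows that $p_*\OO_{\mathcal{Z}}(1)$ is locally free of rank $n$, so the restriction-of-linear-forms map
\[
    \varphi\colon H^0(\PP^{n-1},\OO(1))\otimes\OO_{\Hvanilla}\longrightarrow p_*\OO_{\mathcal{Z}}(1)
\]
is a morphism between locally free sheaves of rank $n$. Its determinant is a global section $\det\varphi$ of $\mathcal{L}:=\det p_*\OO_{\mathcal{Z}}(1)$, and $\det\varphi$ is nonzero at $[\Gamma]$ exactly when the fibre $S_1\to H^0(\OO_\Gamma(1))$ of $\varphi$ is an isomorphism, which happens exactly when $\Gamma$ is linearly normal. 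Hence the zero locus of $\det\varphi$ is $\Hvanilla\setminus\Hgood$, and $s:=\rho^*\det\varphi$ is a section of the line bundle $\rho^*\mathcal{L}$ on $\VPSsbl$ whose zero locus is precisely $\VPSsbl\setminus\VPSgood$.

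Finally I would conclude. The locus $\VPSgood$ is open and dense in $\VPSsbl=\overline{\VPSgood}$, and $s$ is nowhere zero on $\VPSgood$, so $s$ does not vanish identically on any irreducible component of $\VPSsbl$; on each such component it is therefore a nonzerodivisor section of a line bundle, and by Krull's principal ideal theorem every irreducible component of $\{s=0\}$ has codimension one. This is exactly the assertion that every irreducible component of $\VPSsbl\setminus\VPSgood$ is a divisor in $\VPSsbl$. I expect the points needing care to be the bookkeeping in the second paragraph (nondecreasingness of the Hilbert function of $S/I_\Gamma$, which rests on $I_\Gamma$ being saturated and $\Gamma$ finite) and, should $\VPSsbl$ not be reduced a priori, passing to its reduction before applying the principal ideal theorem; all the geometric content is in the determinantal section, which is visibly nonzero on $\VPSgood$.
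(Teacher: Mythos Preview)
Your proof is correct and follows essentially the same approach as the paper's. Both arguments identify $\VPSsbl\setminus\VPSgood$ with the pullback of the non-linearly-normal locus $\mathcal{D}\subseteq\Hvanilla$ under the natural map to the Hilbert scheme, observe that $\mathcal{D}$ is the degeneracy locus of the rank-$n$ bundle map $S_1\otimes\OO\to p_*\OO_{\mathcal{Z}}(1)$ (the paper phrases this as ``$h^0(\OO_\Gamma(1))=h^0(\OO_{\PP^{n-1}}(1))$, so the locus $\mathcal{D}$ is a divisor'', while you write out the determinant section explicitly), and conclude via the principal ideal theorem; the only cosmetic difference is that the paper passes to the image $X=\varphi(\VPSsbl)\subseteq\Hvanilla$ whereas you pull the section back to $\VPSsbl$.
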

\begin{proof}
    Consider the natural projective map $\varphi\colon \Hilb^H\to \Hvanilla$. By
    Proposition~\ref{ref:twoEmbeddings:prop} the locus $\VPSgood$ maps
    isomorphically onto its image. Let
    \[
        X := \overline{\varphi(\VPSgood)}\subseteq
        \Hvanilla
    \]
    be the closure of the image, so that $X = \varphi(\VPSsbl)$.
    Let $\mathcal{D}\subseteq \Hvanilla$ be the locus of $\Gamma$ which are
    not linearly normal. Since $h^0(\OO_{\Gamma}(1)) =
    h^0(\OO_{\PP^{n-1}}(1))$, the locus $\mathcal{D}$ is a divisor. In fact, on the universal scheme $\tilde\Gamma\subset \Hvanilla\times \PP^{n-1}$, the map $H^0(\OO_{\PP^{n-1}}(1))|_{\tilde\Gamma}\to H^0(\OO_{\tilde\Gamma}(1))$ between vector bundles of rank $n$ drops rank on the divisor $\mathcal{D}$.
    By
    Proposition~\ref{ref:charOfPossiblyApolar:prop}, the intersection
    $X \cap \mathcal{D}$ lies in the complement of 
    $\varphi(\VPSgood)$ in $X$. Thus the
    boundary $X\setminus\varphi(\VPSgood)$ is a
    union of divisors of $X$, so also
    $\VPSsbl\setminus \VPSgood$ is a union of divisors.
\end{proof}

Now we pass to the more detailed description of the boundary.
\begin{proposition}\label{prop:regularity}
    Let $n\leq 5$ and let $[I]\in {\VPSsbl}$ be an unsaturated limit ideal. Then $\Gamma=V(I)$
    contains a length $4$ subscheme contained in a line. In particular, for
    $n\leq 3$ there are no unsaturated limit ideals. Moreover, under the same assumptions, the Hilbert function of $S/{I_\Gamma}$ is $(1,n-2,n-1,n,n,\ldots)$ or, for $n=5$, $(1,2,3,4,5,5,\ldots)$.
\end{proposition}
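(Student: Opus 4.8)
The plan is to run a deformation/semicontinuity argument on the Hilbert function, combined with the structural results of \cite{JM} that are already invoked in the paper. Let $[I]\in\VPSsbl\cap\VPSuns$ be an unsaturated limit ideal, so $I$ is the limit of saturated apolar ideals $I_t\in\VPSgood$, and let $\Gamma=V(I)=\operatorname{Proj}(S/I^{\sat})$. Because $I$ is unsaturated but has Hilbert function $H=(1,n,n,\dots)$, the saturation $I^{\sat}$ must strictly contain $I$ in some degree, and since $H_{S/I^{\sat}}(i)\le H_{S/I}(i)=n$ for $i\gg 0$ with equality for large $i$ (the length is preserved), the Hilbert function $H_{S/I^{\sat}}$ is of the form $(1,h_1,h_2,\dots,n,n,\dots)$ with $h_j\le n$ and at least one strict inequality. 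First I would bound how far below $H$ the function $H_{S/I^{\sat}}$ can drop: the engine for this is that $I$ lies in $\Satbar^H$, i.e. it is a flat limit of saturated ideals, so by the results of \cite{JM} on the saturated locus of $\Hilb^H$ — in particular the statement that $\Hilb^{H,\sat}\cong \Hgood$ via the map to $\Hilb_n(\PP^{n-1})$, and the openness of that locus — the limit scheme $\Gamma$ is itself a genuine limit of linearly normal length-$n$ subschemes inside $\Hilb_n(\PP^{n-1})$.

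Next I would use the apolarity constraint, which survives in the limit at the level of the multigraded Hilbert scheme: $I_2\subseteq q^\perp_2$, hence $\dim I_2 = \binom{n}{2}$ and $\dim(S/I)_2 = n$, and more importantly $I^{\sat}_2\supseteq I_2$ still has a quadric space of dimension $\ge\binom{n}{2}$ inside $q^\perp$. Knowing that $\Gamma$ has $\binom{n}{2}$ independent quadrics in its ideal and degree $n$ and spans at most $\PP^{n-1}$, I would combine this with the hypothesis $n\le 5$ to pin down $H_{S/I^{\sat}}$ by a short case analysis. The claim that $\Gamma$ contains a length-$4$ subscheme on a line is exactly the statement that the first difference of $H_{S/I^{\sat}}$ fails to be that of a reduced-in-degree-one ideal; concretely, I expect the only possibilities are $H_{S/I^{\sat}}=(1,n-2,n-1,n,n,\dots)$ — which forces $h_1=n-2$, meaning $\Gamma$ spans only a $\PP^{n-3}$, and in fact the "missing" directions concentrate the degree so that a length-$4$ chunk sits on a line — or, when $n=5$, the boundary value $(1,2,3,4,5,5,\dots)$. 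To extract the line I would argue that a drop of the Hilbert function to $(1,n-2,\dots)$ in degree $1$, together with the Macaulay growth bounds and the fact that $H$ eventually stabilizes at $n$, forces $S/I^{\sat}$ to have a one-dimensional piece growing linearly, i.e. a length-$\ge 4$ subscheme supported scheme-theoretically on a $\PP^1$; this is a standard Macaulay/Gotzmann-type argument once the Hilbert function is known.

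The main obstacle I anticipate is the case analysis producing the exact list of Hilbert functions: a priori many numerical Hilbert functions $(1,h_1,h_2,n,n,\dots)$ are Macaulay-admissible, and cutting the list down to the two claimed ones genuinely uses that $[I]$ is a \emph{limit of apolar saturated} ideals, not just any ideal with Hilbert function $H$. I would handle this by comparing with Proposition~\ref{ref:charOfPossiblyApolar:prop}: along the degeneration the nearby $\Gamma_t$ are locally Gorenstein and linearly normal, and linear normality ($h^1(\cI_{\Gamma_t}(1))=0$) is an open condition that is \emph{lost} in the limit precisely on the divisor $\mathcal D$, so the generic behaviour of the degeneration controls which strata $\Gamma$ can land in. For $n\le 3$ there are simply no length-$4$ subschemes available inside a length-$n$ scheme, which immediately gives the "in particular" clause; for $n=4,5$ the bound on $h_1$ coming from $\dim I_2=\binom{n}{2}$ together with the eventual value $n$ leaves only the listed functions, and I would close by exhibiting the length-$4$ collinear subscheme directly from the minimal free resolution / initial ideal in each surviving case, the $n=5$ border case $(1,2,3,4,5,5,\dots)$ being the one where $\Gamma$ itself is a length-$5$ curvilinear scheme on a conic or line and the length-$4$ sub-line is then immediate.
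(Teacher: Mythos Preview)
Your proposal misses the actual argument, which is much shorter and more elementary than what you outline. The paper's proof is essentially three lines: since $I$ is unsaturated in degree $2$, one has $H_{S/I_\Gamma}(2)\le n-1$, and then Macaulay's Growth Theorem applied to this single inequality forces the entire Hilbert function of $S/I_\Gamma$ into the listed shapes and identifies $V((I_\Gamma)_2)$ as (containing) a line. No deformation theory, no semicontinuity, no results from \cite{JM}, and no use of the Gorenstein/linear-normality of nearby fibres is needed.

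There are two concrete gaps in your plan. First, you never isolate the key numerical input $H_{S/I^{\sat}}(2)\le n-1$. You write that ``the bound on $h_1$ comes from $\dim I_2=\binom{n}{2}$'', but $\dim I_2=\binom{n}{2}$ only says $H_{S/I}(2)=n$; the strict inequality for $I^{\sat}$ requires knowing that the unsaturation occurs in degree $2$, which is what actually drives the argument. (This uses that $q$ has full rank: if the unsaturation were only in degree $1$, one would have $\ell\cdot S_1\subseteq I_2\subseteq q^\perp$ for some linear $\ell$, forcing $\ell$ into the radical of $q$, a contradiction.) Second, your assertion that ``cutting the list down to the two claimed ones genuinely uses that $[I]$ is a limit of apolar saturated ideals'' is false: the limit hypothesis plays no role whatsoever in the classification of Hilbert functions here. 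Once $H_{S/I_\Gamma}(2)\le n-1$ is known, Macaulay's bound alone gives $H(m)\le m+1$ (when $H(2)\le 3$) or $H(m)\le m+2$ (when $H(2)=4$, possible only for $n=5$), and comparing with the eventual value $n$ pins down the function and exhibits the line. The heavy machinery you propose would at best reprove this, and the parts of your sketch that rely on the limit structure or on \cite{JM} are circular, since later results in the paper (e.g.\ Proposition~\ref{prop:necessaryForLimitOfSat}) take the present proposition as input.
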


\begin{proof}
    Consider the Hilbert function $H$ of $S/(I_\Gamma)_2$. Since $I$ is
    unsaturated (in degree 2), $h^1(\mathcal{I}_{\Gamma}(2)) \neq 0$, so $H(2) \leq n-1$.
    From Macaulay's Growth Theorem, see~\cite[\S2E and Lemma~2.9]{cjn13}, it
    follows that
    \begin{enumerate}
        \item when $n\leq 3$, we get $H(m)\leq 2$ for all $m\geq 2$, hence a
            contradiction,
        \item when $n=4$, we get $H(m) = m+1$ for all $m\geq 2$, so
            $(I_{\Gamma})_2$ defines a line $L$ with $\Gamma \subset L$,
        \item when $n=5$, we get $H(m) = m+1$ or $H(m) = m+2$ for all $m\geq 2$, so
            $V((I_{\Gamma})_2)$ is a line or a subscheme with Hilbert polynomial $m+2$;
            in the latter case it is either a line $L$ with embedded point or a line $L$ with a
            disjoint point. In either case $\Gamma\cap L$ has length at least
            $n-1 = 4$.\qedhere
    \end{enumerate}
\end{proof}

The following is our main tool to discern $\VPSsbl$ from $\VPS$.
\begin{proposition}[{\cite[Example~4.2]{JM}}]\label{prop:necessaryForLimitOfSat}
Let $I$ be an unsaturated homogeneous ideal such that the Hilbert function of
$S/I$ is $(1,n,n,n, \ldots ,)$ while the Hilbert function of $S/I^{\sat}$ is
$(1,n-2, n-1, n, n,  \ldots )$.  Assume furthermore that $V(I_2)$ is a line
$L$ and some, possibly embedded, points (this is automatic when $n\leq 5$,
see Proposition~\ref{prop:regularity}).

If $I$ is a limit of saturated ideals, then $I^{\sat}\cdot I_L$, where $I_L$ is the ideal of $L$, is contained in $I$.
\end{proposition}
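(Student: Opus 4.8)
The plan is to realize $I$ as a limit of saturated ideals $I_t = I_{\Gamma_t}$ over a DVR and then track what happens to the products $I^{\sat}_{\Gamma_t}\cdot (I_L)_{\Gamma_t}$. More precisely, I would work over $R = k[[t]]$ with fraction field $K$, choose a flat family $\mathcal{I}\subseteq S_R$ of graded ideals with $\mathcal{I}\otimes_R K$ saturated (corresponding to $\Gamma_K \subseteq \PP^{n-1}_K$ of length $n$, spanning) and $\mathcal{I}\otimes_R k = I$, which exists because $I$ is a limit of saturated ideals in $\Hilb^H$. The key point is that the flat limit degree-by-degree is computed as $\mathcal{I}\cap S_R$ intersected with residue; since $H_{S/I} = (1,n,n,\ldots)$ is the generic Hilbert function, flatness is automatic and $I_d = (\mathcal{I}\otimes_R k)_d$ is the honest limit of $(I_{\Gamma_t})_d$ for each $d$.

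Next I would analyze the saturation. Since $V(I_2) = L$ is a line (with some embedded/residual points) and $S/I^{\sat}$ has Hilbert function $(1, n-2, n-1, n, \ldots)$, the saturated ideal $I^{\sat}$ cuts out a scheme $\Gamma_0$ supported on $L$ plus possibly points, with $I_L \supseteq I^{\sat}$ and $I_L \supseteq I_2^{\sat}$ in the relevant degrees. The containment $I^{\sat}\cdot I_L \subseteq I$ is a statement in each degree $d$: I want $(I^{\sat})_a \cdot (I_L)_b \subseteq I_{a+b}$ whenever $a+b = d$. The strategy is to show each such product is a limit, inside $S_d \otimes R$, of a subspace of $(I_{\Gamma_t})_d$. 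The natural candidates: $(I^{\sat})_a$ should be the flat limit of $(I_{\Gamma_t})_a$ intersected appropriately — actually $I^{\sat} = I$ in all large degrees, and in the low degrees ($a \leq 1$ or $2$) $(I^{\sat})_a$ is where the jump happens. Meanwhile $(I_L)_b$ is a fixed space (independent of $t$), and over $K$ we have $I_L \supseteq I_{\Gamma_K}$ is false in general — rather one uses that $\Gamma_K$ is not on a line, so $I_{\Gamma_K}$ and $I_L$ are unrelated for finite $t$; the product $I^{\sat}\cdot I_L$ only becomes forced in the limit.

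The cleanest route is to use the elementary fact (this is essentially the content of \cite[Example~4.2]{JM}) that if $J_t \to J_0$ is a degeneration and $\mathfrak{a}$ is any fixed homogeneous ideal with $\mathfrak{a}\subseteq J_0^{\sat}$ in the sense that $J_0^{\sat} = \mathfrak{a} + (\text{something in high degree})$, then multiplying the family $J_t$ by $\mathfrak{a}$ and taking limits stays inside $J_0$ — because $\mathfrak{a}\cdot J_t^{\sat} \subseteq \mathfrak{a}\cdot S$ degenerates compatibly. Concretely: for each $t \neq 0$, $I_{\Gamma_t}$ is saturated, so $I_{\Gamma_t} \supseteq (I_{\Gamma_t})_{\geq 2}^{\sat}$, but more to the point $S/I_{\Gamma_t}$ has no $\mathfrak m$-torsion; passing to the limit the module $S/I$ acquires torsion exactly supported on $L$, and the annihilator of that torsion is detected by $I_L$. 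I would make this precise by the semicontinuity: $\dim (I^{\sat}\cdot I_L)_d$ is constant (computable from the two known Hilbert functions), $\dim (I_{\Gamma_t}\text{-analogue})_d \geq$ that value, and the flat limit of the latter lands in $I_d$; identifying the limit subspace with $(I^{\sat}\cdot I_L)_d$ via its $\GL_n$-equivariance and the structure of $\Gamma_0$ finishes it.

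The main obstacle will be the identification step: showing that the flat limit of the product subspaces $(I_{\Gamma_t})_a\cdot(I_L)_b$ (or the right analogue — one must be careful that $I_L$ does \emph{not} contain $I_{\Gamma_t}$ for $t\neq 0$, so the product being formed for $t\neq 0$ needs a correct definition) is exactly $(I^{\sat}\cdot I_L)_{a+b}$ and not something larger. This requires a dimension count matching $\dim(I^{\sat}\cdot I_L)_d$ against what the family produces, using the two prescribed Hilbert functions plus the line-plus-points structure of $V(I_2)$; the hypothesis that $V(I_2)$ is a line with (embedded) points is precisely what makes $I_L$ and $I^{\sat}$ interact in a controlled, $d$-graded way, so I expect the bookkeeping to go through degree-by-degree once the limit is set up over the DVR.
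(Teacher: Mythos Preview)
The paper does not supply a proof of this proposition: it is quoted directly from \cite[Example~4.2]{JM}, so there is no in-paper argument to compare against. I will therefore assess your plan on its own terms.

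There is a genuine gap, and you have in fact put your finger on it yourself. Your strategy is to form, for each $t\neq 0$, a product subspace inside $(I_{\Gamma_t})_d$ and then take its flat limit. But for $t\neq 0$ the scheme $\Gamma_t$ spans $\PP^{n-1}$ and lies on no line, so there is no candidate for ``$I_L$'' on the generic fibre and the product $(I_{\Gamma_t})_a\cdot (I_L)_b$ has no intrinsic meaning there; nor does $(I_{\Gamma_t}^{\sat})_1$, which is zero for $t\neq 0$. The phrase ``the right analogue'' is doing all the work, and nothing in the plan indicates what that analogue is. Semicontinuity alone cannot manufacture the subspace $(I^{\sat})_1\cdot (I_L)_1$ out of a family whose generic member has $(I^{\sat}_t)_1=0$.

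A second, more elementary, omission makes the plan look harder than it is: since $H_{S/I}$ and $H_{S/I^{\sat}}$ agree from degree~$3$ on, one has $I_d = I^{\sat}_d$ for $d\geq 3$, and hence $(I^{\sat}\cdot I_L)_d\subseteq (I^{\sat})_d=I_d$ automatically for $d\geq 3$. The entire content of the proposition is the single degree-$2$ inclusion
\[
    (I^{\sat})_1\cdot (I_L)_1 \ \subseteq\ I_2,
\]
i.e.\ that the hyperplane $I_2\subseteq I^{\sat}_2$ contains this fixed subspace. This reframes the problem as one about the fibre $\pi^{-1}([\Gamma_0])$ of $\pi\colon \Hilb^H\to \Hvanilla$, which is the projective space of hyperplanes in $I^{\sat}_2$, and about which hyperplanes lie in $\Slip^H$; that is the setting in which \cite{JM} works (and which the present paper exploits in the proof of Proposition~\ref{prop:SlipSmooth}). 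Your DVR set-up can be made to feed into this picture, but not via the undefined products you propose; one has instead to track how the two linear forms in $(I^{\sat})_1$ arise as limits of (rescaled) elements of $(S_R)_1$ under the degenerating restriction map $S_1\to H^0(\mathcal{O}_{\Gamma_t}(1))$, and then multiply into $\mathcal{I}_2$. As written, the plan does not do this.
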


For a linear space $L \subseteq T_1$ we denote by $L^{\perp} \subseteq S_1$
its perpendicular (this notion has nothing to do with $q$!). Recall also that
the linear maps $q:S_1\to T_1$ and
$q^{-1}:T_1\to S_1$ are nondegenerate and inverses of each other.

\begin{lemma}\label{quadricPolarityLemma}
Let $L, N \subset T_1$ be linear subspaces with $\dim L + \dim N = \dim T_1$. Then the following are equivalent
\begin{enumerate}
    \item $L^\perp\cdot N^\perp \subseteq q^\perp$,
    \item $q(L^\perp\cdot N^\perp) = 0$,
    \item $q(L^\perp) = N$,
    \item $q^{-1}(N) = L^\perp$,
    \item $q^{-1}(N\cdot L) = 0$,
    \item $N \cdot L \subseteq (q^{-1})^{\perp}$.
\end{enumerate}
\end{lemma}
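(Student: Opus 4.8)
The plan is to prove the six conditions equivalent by exhibiting them as a cycle, using the fact that $q\colon S_1\to T_1$ and $q^{-1}\colon T_1\to S_1$ are mutually inverse isomorphisms and the basic dictionary between the multiplication pairing $S_1\times S_1\to S_2$ (resp.\ $T_1\times T_1 \to T_2$) and the apolarity pairing. The key observation I would isolate first is the following reformulation of membership in $q^\perp$: for linear forms $a, b\in S_1$, the product $ab$ lies in $q^\perp_2 = \ker(q\colon S_2\to k)$ if and only if the bilinear form on $S_1$ associated to $q$ pairs $a$ with $b$ to zero; and the bilinear form associated to $q$ is exactly the one whose associated linear map is $q\colon S_1\to T_1$ composed with the natural evaluation $T_1\times S_1\to k$. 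In other words, $ab\in q^\perp$ iff $\langle q(a), b\rangle = 0$, where $\langle\,,\,\rangle$ is the duality pairing between $T_1$ and $S_1$. Dually, $cd\in (q^{-1})^\perp$ for $c,d\in T_1$ iff $\langle c, q^{-1}(d)\rangle = 0$.

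From this reformulation the equivalences are almost mechanical. The equivalence $(1)\Leftrightarrow(2)$ is immediate since $q^\perp$ is generated in degree two and $q^\perp_2 = \ker(q\colon S_2\to k)$, so $L^\perp\cdot N^\perp\subseteq q^\perp$ is literally the vanishing of $q$ on all products of a form in $L^\perp$ with a form in $N^\perp$; similarly $(5)\Leftrightarrow(6)$. For $(2)\Leftrightarrow(3)$: $q(L^\perp\cdot N^\perp) = 0$ means $\langle q(a), b\rangle = 0$ for all $a\in L^\perp$, $b\in N^\perp$, i.e.\ $q(L^\perp)\subseteq (N^\perp)^\perp = N$; since $q$ is an isomorphism and $\dim L^\perp = \dim T_1 - \dim L = \dim N$ (using $\dim L + \dim N = \dim T_1$), the inclusion $q(L^\perp)\subseteq N$ is an equality, giving $(3)$. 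For $(3)\Leftrightarrow(4)$: apply the isomorphism $q^{-1}$ to both sides of $q(L^\perp) = N$ to get $L^\perp = q^{-1}(N)$, and conversely. Finally $(4)\Leftrightarrow(5)$: from $q^{-1}(N) = L^\perp$ we get $\langle v, q^{-1}(w)\rangle = 0$ for all $v\in L$, $w\in N$ (since $L^\perp$ annihilates $L$), which is exactly $q^{-1}(N\cdot L) = 0$ by the dual form of the reformulation above; conversely $q^{-1}(N\cdot L) = 0$ says $q^{-1}(N)\subseteq L^\perp$, and a dimension count ($\dim q^{-1}(N) = \dim N = \dim L^\perp$) upgrades this to equality. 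Chaining $(1)\Leftrightarrow(2)\Leftrightarrow(3)\Leftrightarrow(4)\Leftrightarrow(5)\Leftrightarrow(6)$ closes the loop.

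I do not expect a genuine obstacle here; the statement is a clean piece of linear algebra once the apolarity pairing is unwound. The one place to be careful is bookkeeping with the two perpendicularity operations: the symbol $L^\perp$ is perpendicular with respect to the \emph{tautological} $T_1$–$S_1$ duality, not with respect to $q$, and one must consistently track which space ($S_1$ or $T_1$) each object lives in, since $q$ and $q^{-1}$ swap them. The dimension hypothesis $\dim L + \dim N = \dim T_1$ is used precisely to turn the inclusions coming from the vanishing conditions into equalities, so I would flag each such use explicitly. Apart from that, the proof is a short cycle of implications and I would present it as such, perhaps even just verifying the three ``core'' equivalences $(2)\Leftrightarrow(3)$, $(3)\Leftrightarrow(4)$, $(4)\Leftrightarrow(5)$ and noting that $(1)\Leftrightarrow(2)$ and $(5)\Leftrightarrow(6)$ hold by definition of $q^\perp$ and $(q^{-1})^\perp$.
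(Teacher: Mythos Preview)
Your proof is correct and follows essentially the same approach as the paper. The paper's proof is a one-line remark that the equivalences follow from the identity $q(L^\perp)(N^\perp)=q(L^\perp\cdot N^\perp)$, which is precisely your key observation that $ab\in q^\perp$ iff $\langle q(a),b\rangle=0$; you simply spell out the chain of implications and the dimension counts that the paper leaves implicit.
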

\begin{proof}
The equivalences follows immediately from the definitions when noticing that $q(L^\perp)(N^\perp)=q(L^\perp\cdot N^\perp)$.
\end{proof}

\begin{corollary}[a geometric condition for being in the $\VPSsbl$]\label{kri}
In the notation of Proposition~\ref{prop:necessaryForLimitOfSat} assume that $I\subseteq q^\perp\subset S$. Let $L\subset T_1$ be the line in $V(I_2)$  and let  $N\subset T_1$ be the linear span of $V(I^{\sat})$. Then $q^{-1}(L\cdot N) = 0$.
\end{corollary}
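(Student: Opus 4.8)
The plan is to translate everything into the language of Lemma~\ref{quadricPolarityLemma} and then invoke the equivalence of its conditions (1) and (5). First I would identify the relevant subspaces of $S_1$ and $T_1$. Since $L\subset \PP^{n-1}=\PP(T_1)$ is a line, it is a $2$-dimensional subspace of $T_1$, and the degree-one part of its homogeneous ideal is exactly $(I_L)_1 = L^\perp\subseteq S_1$, of dimension $n-2$. On the other side, $I^{\sat}$ is saturated with $(S/I^{\sat})_1$ of dimension $n-2$, so $(I^{\sat})_1$ is the $2$-dimensional space of linear forms vanishing on the linear span $N$ of $V(I^{\sat})$; thus $(I^{\sat})_1 = N^\perp$ and $\dim N = n-2$. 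In particular $\dim L + \dim N = n = \dim T_1$, so Lemma~\ref{quadricPolarityLemma} applies to the pair $(L, N)$.

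Next I would run the inclusion chain. We are in the setting of Proposition~\ref{prop:necessaryForLimitOfSat} — in particular $I$ is a limit of saturated ideals — so that proposition gives $I^{\sat}\cdot I_L\subseteq I$. Comparing degree-two parts and using the identifications above,
\[
    L^\perp\cdot N^\perp = (I_L)_1\cdot (I^{\sat})_1\subseteq (I_L\cdot I^{\sat})_2\subseteq I_2.
\]
By the standing assumption $I\subseteq q^\perp$ we get $I_2\subseteq q^\perp_2$, hence $L^\perp\cdot N^\perp\subseteq q^\perp$. This is condition (1) of Lemma~\ref{quadricPolarityLemma}, so its condition (5) holds as well: $q^{-1}(N\cdot L) = 0$, i.e.\ $q^{-1}(L\cdot N) = 0$, as claimed.

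I do not anticipate a genuine obstacle: the corollary is essentially a repackaging of Proposition~\ref{prop:necessaryForLimitOfSat} through the polarity dictionary of Lemma~\ref{quadricPolarityLemma}. The points needing care are bookkeeping: that the two meanings of $\perp$ here (the ideal of a linear space, and the pairing $S_1\times T_1\to k$) coincide, that the dimension count $\dim L+\dim N=\dim T_1$ comes out exactly right so that Lemma~\ref{quadricPolarityLemma} is applicable, and that one is indeed entitled to invoke Proposition~\ref{prop:necessaryForLimitOfSat} — i.e.\ that $V(I_2)$ is a line together with (possibly embedded) points and the two Hilbert functions are as prescribed, which for $n\leq 5$ is supplied by Proposition~\ref{prop:regularity}.
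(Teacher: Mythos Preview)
Your proof is correct and follows essentially the same approach as the paper: identify $(I_L)_1=L^\perp$ and $(I^{\sat})_1=N^\perp$, check $\dim L+\dim N=n$, use the conclusion $I^{\sat}\cdot I_L\subseteq I\subseteq q^\perp$ of Proposition~\ref{prop:necessaryForLimitOfSat}, and then apply the equivalence $(1)\Leftrightarrow(5)$ of Lemma~\ref{quadricPolarityLemma}. Your write-up is in fact more explicit about the bookkeeping (the two meanings of $\perp$, the degree-two restriction, and the hypotheses needed to invoke Proposition~\ref{prop:necessaryForLimitOfSat}) than the paper's terse version.
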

\begin{proof}
We have $I^{\sat}\cdot I_L \subseteq I \subseteq q^\perp$. The span $N$ is
equal to $(I^{\sat})_1^{\perp}$ and $L$ is equal to $(I_{L})_1^{\perp}$. By the assumption of Proposition \ref{prop:necessaryForLimitOfSat}, we have $\dim N + \dim L = \dim T_1$, so the claim follows from Lemma~\ref{quadricPolarityLemma}.
\end{proof}

\begin{corollary}\label{linesinquadric} Let $[I]\in \VPSsbl, n\leq 5$ be an unsaturated limit ideal. Then $\Gamma=V(I)$ contains a subscheme of length  $4$ in a line $L$, and the line $L$ is contained in the inverse quadric $Q^{-1}.$  In particular, when $n=4$,  $I=I_\Gamma\cap q^\perp$.
\end{corollary}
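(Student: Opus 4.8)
The plan is to combine Proposition~\ref{prop:regularity}, Proposition~\ref{prop:necessaryForLimitOfSat}, and Corollary~\ref{kri}, after first arranging that the ideal is actually apolar to $q$. So the first step is the reduction to the apolar case: given $[I]\in \VPSsbl$ with $n\le 5$ an unsaturated limit ideal, by Proposition~\ref{prop:regularity} the scheme $\Gamma = V(I)$ contains a length $4$ subscheme in a line, and the Hilbert function of $S/I_\Gamma$ is $(1,n-2,n-1,n,n,\ldots)$ (the case $n=5$, $(1,2,3,4,5,5,\ldots)$, would force $h^1(\cI_\Gamma(2))=0$, contradicting unsaturatedness in degree $2$, so it does not occur). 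Thus the hypotheses of Proposition~\ref{prop:necessaryForLimitOfSat} are met, and since $[I]\in\VPSsbl$ the ideal $I$ is by definition a limit of saturated apolar ideals; in particular $I_2\subseteq q^\perp_2$, so $I\subseteq q^\perp$ because $q^\perp$ is generated in degree $2$.

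Next I would apply Corollary~\ref{kri} directly: with $L\subset T_1$ the line in $V(I)$ and $N\subset T_1$ the linear span of $V(I^{\sat})$, we have $\dim L + \dim N = \dim S_1 = n$ (a $2$-dimensional subspace $L$ of $T_1$ and an $(n-2)$-dimensional span $N$, matching the Hilbert function $H(S/I^{\sat})$), and Corollary~\ref{kri} gives $q^{-1}(L\cdot N) = 0$. By Lemma~\ref{quadricPolarityLemma} (equivalence of (5) and (6), or directly of (5) and (2) applied to $q^{-1}$) this is exactly the statement $L\cdot L \subseteq (q^{-1})^\perp$ once we observe $L\subseteq N$: indeed, $L$ is a line in $V(I)$, hence $V(I^{\sat})\supseteq V(I)$ spans at least $L$, so $L\subseteq N$, and therefore $L\cdot L\subseteq L\cdot N$, giving $q^{-1}(L\cdot L) = 0$, i.e. $L\subseteq Q^{-1}$.

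Finally, for the case $n=4$: here the Hilbert function of $S/I^{\sat}$ is $(1,2,3,4,4,\ldots)$, so $V(I^{\sat}) = \Gamma$ has length $4$ and is contained in the line $L$ (since $V(I_2)=L$ and $\Gamma\subseteq V(I_2)$, while $\Gamma$ already has length $4$). Thus $I^{\sat} = I_\Gamma$ with $\Gamma\subset L$, and $I_L\cdot I_\Gamma \subseteq I \subseteq q^\perp$ together with the reverse containment $I\subseteq I_\Gamma\cap q^\perp$ (which holds because $I\subseteq I^{\sat} = I_\Gamma$ and $I\subseteq q^\perp$) forces $I = I_\Gamma\cap q^\perp$; one checks the opposite inclusion $I_\Gamma\cap q^\perp\subseteq I$ by a dimension count in each degree, using $H(S/I) = (1,4,4,\ldots)$ and that $(I_\Gamma\cap q^\perp)_2 = (I_\Gamma)_2\cap q^\perp_2$ has codimension $4$ in $S_2$ (as $(I_\Gamma)_2 = (I_L)_2$ has codimension $5$ and is not contained in the hyperplane $q^\perp_2$, since $\Gamma$ — hence $L$ — is not apolar to $Q$ as a subscheme). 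The main obstacle is this last dimension bookkeeping in the $n=4$ case, i.e. verifying $(I_\Gamma)_2\not\subseteq q^\perp_2$ so that the intersection has the expected codimension $4$ and hence agrees with $I_2$; everything else is a direct appeal to the already-established Lemma and Corollary.
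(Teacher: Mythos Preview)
Your overall strategy---invoke Proposition~\ref{prop:regularity}, then Corollary~\ref{kri}, then use $L\subseteq N$ to get $q^{-1}(L^2)=0$---is exactly the paper's. The $n=4$ conclusion via a Hilbert-function/dimension count is also the same idea as in the paper (though note a slip: $(I_\Gamma)_2=(I_L)_2$ has codimension~$3$ in $S_2$, not~$5$; your final codimension~$4$ for the intersection with $q^\perp_2$ is still correct once you argue $(I_L)_2\not\subseteq q^\perp_2$, which follows from nondegeneracy of $q$ rather than from ``$\Gamma$ not apolar'').

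There is, however, a genuine error in your reduction step. You assert that for $n=5$ the Hilbert function $H_{S/I^{\sat}}=(1,2,3,4,5,5,\ldots)$ ``would force $h^1(\cI_\Gamma(2))=0$'' and hence cannot occur. This is false: in that case $H_{S/I_\Gamma}(2)=3$, so $h^1(\cI_\Gamma(2))=5-3=2\neq 0$, and $I$ is unsaturated in degrees $1,2,3$. Such ideals do occur as unsaturated limits (they form the family $\mathcal{F}_2$ discussed just after this corollary, with $\mathcal{F}_2\cap\VPSsbl$ of dimension~$9$). Moreover, in this case the span $N$ of $\Gamma$ equals $L$, so $\dim L+\dim N=4\neq 5$ and Corollary~\ref{kri} (which sits ``in the notation of Proposition~\ref{prop:necessaryForLimitOfSat}'', hence assumes $H_{S/I^{\sat}}=(1,n-2,n-1,n,\ldots)$) does not apply as stated. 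The paper's own proof likewise invokes Corollary~\ref{kri} without separating out this case; so your approach matches the paper's, but your attempt to rule the case out is incorrect and leaves a gap that the paper, too, does not explicitly close here---one needs the finer analysis from \cite[\S4.4.3]{JM} used later for $\mathcal{F}_2$.
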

\begin{proof}  Let $I=I_\Gamma\cap q^\perp$, where, of course, $I_\Gamma=I^{\rm sat}$.
Then the linear span of $N=V(I_\Gamma)\subset T_1$ contains $L$.
But then $L^2\subset N\cdot L$, and hence, by Corollary \ref{kri},
$q^{-1}(L^2)=0$, which means $L\subset Q^{-1}$. When $n=4$, $I_\Gamma$ has
Hilbert function $(1,2,3,4,4, \ldots )$ while $I_\Gamma\cap q^\perp$ has
Hilbert function $(1,4,4,4,4, \ldots )$, so $I=I_\Gamma\cap q^\perp$.
\end{proof}
A finite apolar scheme of minimal length is Gorenstein.  This property does not necessarily hold for unsaturated limit ideals.  
\begin{example}[Non-Gorenstein unsaturated limit ideal for $n=5$]\label{ex:nonGorenstein}
    Let $q = y_1y_4 + y_2y_3 + y_5^2$. Consider the family over $\mathbb{A}^1
    = \Spec \CC[t]$ given by the ideal
    \[
        \left(x_{4}x _{5},\,x_{3}x_{5},\,x_{1}x_{5},\,x_{4}^{2
        },\,x_{3}x_{4},\,x_{1}^2t + x_{2}x_{4},\,x_{1}x_{4}-x_{
        5}^{2},\,x_{3}^{2},\,x_{2}x_{3}-x_{5}^{2},\,
        x_{1}x_{3},\,x_{1}^{4}\right)
    \]
    It is contained in $q^\perp$ and the fiber over $t=\lambda\neq 0$ is
    saturated. The fiber over $t = 0$ is non-saturated and abstractly
    isomorphic to $\Spec \CC[\varepsilon_1, \varepsilon_2]/(\varepsilon_1^4,
    \varepsilon_1\varepsilon_2, \varepsilon_2^2)$, which in particular is not
    Gorenstein.
\end{example}

Recall that $\Satbar^H \subseteq \Hilb^H$ is the union of irreducible components of
$\Hilb^H$ and it is defined as the closure of the locus of saturated $[I]\in \Hilb^H$.
The following theorem allows us to relate smoothness
a point $[I]\in \VPSsbl$ and the same point $[I]\in \Satbar^H$. This is useful both theoretically
and in computations, see cases $n=4,5$ below.
\begin{proposition}\label{ref:smoothness}
    For every $\GL_n$-stable locally closed subscheme $\mathcal{Z}$ of
    $\Hilb^H$ the map
    \[
        \GL_n \times (\VPS\cap \mathcal{Z})\to \mathcal{Z};\quad (g,I)\mapsto g\cdot I
    \]
    is smooth and has $\binom{n+1}{2}$-dimensional fibers.
\end{proposition}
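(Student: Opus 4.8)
The statement is essentially a relative version of the orthogonalization theorem (Theorem~\ref{thm:orthogonalization}), upgraded from first-order deformations to a smoothness statement, and packaged with a dimension count of the fibers. The plan is to verify smoothness by the infinitesimal lifting criterion, using orthogonalization to produce the required lifts, and then to compute the fiber dimension by identifying the fibers with $\GL_n$-orbits modulo the stabilizer of an apolar ideal.

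First I would set up the lifting criterion. Let $A\onto A'$ be a small extension of finite local $k$-algebras with square-zero kernel $J$, and suppose we are given an $A'$-point of $\GL_n\times(\VPS\cap\mathcal Z)$ together with an $A$-point of $\mathcal Z$ extending its image in $\mathcal Z$. Concretely this is a pair $(g', [I'])$ with $g'\in\GL_n(A')$, $[I']\in(\VPS\cap\mathcal Z)(A')$, and an ideal $[I]\in\mathcal Z(A)$ with $I\mod J = g'\circ I'$. I want to lift $(g',[I'])$ to an $A$-point $(g,[J])$ of $\GL_n\times(\VPS\cap\mathcal Z)$ mapping to $[I]$; that is, $g\in\GL_n(A)$ lifting $g'$, $[J]\in(\VPS\cap\mathcal Z)(A)$ with $J\subseteq q^\perp$, and $g\circ J = I$. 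Rearranging, I need $g\in\GL_n(A)$ lifting $g'$ such that $(g^{-1}\circ I)_2\subseteq q^\perp$ (the higher graded pieces and membership in $\mathcal Z$ then come for free since $g^{-1}\circ I$ is $\GL_n(A)$-equivalent to $I\in\mathcal Z(A)$ and $\mathcal Z$ is $\GL_n$-stable, and $q^\perp$ is generated in degree two). First lift $g'$ arbitrarily to some $g_0\in\GL_n(A)$; then $I_0:=g_0^{-1}\circ I$ is a lift of $I'$ satisfying hypotheses (1)--(3) of Theorem~\ref{thm:orthogonalization} — hypothesis~(2), local freeness of $(S_A/I_0)_2$, holds because $[I]\in\mathcal Z(A)\subseteq\Hilb^H(A)$ forces the Hilbert function to be constant along the family. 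Orthogonalization then supplies $h\in\GL_n(A)$ with $h\equiv\Id\pmod J$ and $(h\circ I_0)_2\subseteq q^\perp$; taking $g:=g_0 h^{-1}$ (note $g\equiv g'\pmod J$) and $J:=g^{-1}\circ I = h\circ I_0$ gives the desired lift. This proves smoothness.

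For the fiber dimension, fix a $k$-point $[I]\in\mathcal Z$. Its fiber under the action map is $\{(g,[J]) : g\circ J = I\}$; projecting to the first coordinate identifies it with the set of $g\in\GL_n$ such that $g^{-1}\circ I\subseteq q^\perp$, i.e.\ with $\{g : (g^{-1}\circ I)_2\subseteq q^\perp\}$. If this set is nonempty, fix one such $g_1$ and replace $I$ by $g_1^{-1}\circ I\subseteq q^\perp$; the fiber then becomes $\{g\in\GL_n : g^{-1}\circ q^\perp \supseteq I\}$, wait — it becomes $\{g : (g\circ I)_2\subseteq q^\perp\}$ after renaming. Since $I_2\subseteq q^\perp_2=\ker(q\colon S_2\to k)$, the condition $(g\circ I)_2\subseteq q^\perp$ is equivalent to $q(g\circ I_2)=0$, i.e.\ $(g^{-1}\circ q)$ vanishes on $I_2$; as $g$ ranges over $\GL_n$, $g^{-1}\circ q$ ranges over the full-rank quadrics apolar to $I$. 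The fiber thus maps onto the variety of full-rank quadrics apolar to $\Gamma=V(I)$, which by Proposition~\ref{ref:charOfPossiblyApolar:prop}-type reasoning (the Gorenstein pairing is unique up to scalar when $\Gamma$ is linearly normal Gorenstein, and in the unsaturated case a direct check) is a single $k$-point up to scalar, and the fibers of $g\mapsto g^{-1}\circ q$ are cosets of the stabilizer $\{g : g\circ q = q\}\cong\mathrm{O}(q)$, of dimension $\binom{n}{2}$. Combining, $\dim(\text{fiber}) = \dim\mathrm{O}(q) + \dim\{\text{apolar full-rank quadrics}\} = \binom{n}{2} + n = \binom{n+1}{2}$.

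The main obstacle is the fiber-dimension bookkeeping, specifically pinning down the "variety of full-rank quadrics apolar to a fixed $[I]\in\VPS$" and seeing it is exactly $n$-dimensional (an affine cone over a point). For $[I]$ with $I_2 = q^\perp_2$ of the expected codimension this is immediate since $q^\perp_2$ determines $q$ up to scalar; one must check that every $[I]\in\VPS$ indeed has $I_2 = q^\perp_2$, i.e.\ $\dim I_2 = \binom{n}{2}$ — but this is built into the definition of $\Hilb^H$ with $H=(1,n,n,\dots)$, since $\dim S_2 = \binom{n+1}{2}$ and $H(2)=n$. Hence $I_2$ is a hyperplane in $q^\perp_2$... rather, $I_2$ has codimension $n$ in $S_2$ and codimension $n-1$ in $q^\perp_2$, so $I_2$ does \emph{not} by itself determine $q$; one genuinely needs that $q$ is the \emph{unique} (up to scalar) full-rank quadric with $I_2\subseteq q^\perp$, which for saturated $I$ follows from uniqueness of the Gorenstein bilinear form and in general follows because two full-rank quadrics $q,q'$ with $I_2\subseteq q^\perp\cap(q')^\perp$ would give $I_2\subseteq(\lambda q+\mu q')^\perp$ for all $\lambda,\mu$, producing a pencil of apolar quadrics, all necessarily full-rank on a Zariski-open set — and one argues this cannot happen for $[I]\in\VPS$ using that $S/I$ has the "square-free" Hilbert function forcing the apolar form to be essentially the $\varphi$ of Proposition~\ref{ref:charOfPossiblyApolar:prop}. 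Once this uniqueness is secured the dimension count is purely formal.
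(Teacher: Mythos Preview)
Your smoothness argument via the infinitesimal lifting criterion is correct and is exactly what the paper's one-line proof ``By Theorem~\ref{thm:orthogonalization}'' unpacks to. The verification that hypothesis~(2) of the orthogonalization theorem holds (because points of $\Hilb^H$ have constant Hilbert function) and the observation that membership in $\mathcal Z$ is automatic by $\GL_n$-stability are both right.

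The fiber-dimension argument, however, contains a genuine error. You correctly identify the fiber over $[I]$ (with $I\subseteq q^\perp$) as $\{g\in\GL_n : g\cdot q \text{ is apolar to } I_2\}$, and correctly factor this through the orbit map $g\mapsto g\cdot q$ with $\mathrm O(q)$-coset fibers of dimension $\binom{n}{2}$. But you then assert that the set of full-rank apolar quadrics is ``a single $k$-point up to scalar'', invoking uniqueness of the Gorenstein pairing. This is false: for a linearly normal Gorenstein $\Gamma$, the generators of $\omega_A\cong A$ are parametrized by the units $A^\times$, an $n$-dimensional variety, and each gives a \emph{different} nondegenerate apolar quadric. (Already for $n=2$ and $\Gamma=\{[1{:}0],[0{:}1]\}$, every $\alpha y_1^2+\beta y_2^2$ with $\alpha\beta\neq0$ is full-rank and apolar.) Your final formula $\binom{n}{2}+n$ is nonetheless correct, but the ``$+n$'' contradicts your own ``single point'' claim, and the entire last paragraph --- arguing for uniqueness --- is chasing a statement that is both false and unnecessary.

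The fix is immediate and purely linear-algebraic: the space of \emph{all} quadrics apolar to $I$ is $(I_2)^\perp\subseteq T_2$, of dimension $\dim T_2-\dim I_2=\binom{n+1}{2}-\binom{n}{2}=n$; the full-rank ones form a nonempty (it contains $q$) open subset, hence also $n$-dimensional. No Gorenstein input is needed. With this correction your fiber computation goes through cleanly.
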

\begin{proof}
    Smoothness of the map follows by Theorem~\ref{thm:orthogonalization} and infinitesimlar lifting criterion~\cite[\href{https://stacks.math.columbia.edu/tag/02H6}{Tag 02H6}]{stacks-project}. {For the fiber, by $\GL_n$-equivariance it is enough to compute the dimension of the fiber over any point $I\in \VPS\cap \mathcal{Z}$. The fiber consists of pairs $(g, g^{-1}I)$ such that $g^{-1}I\subset q^{\perp}$. The containment is equivalent to $I\subset (g\cdot q)^{\perp}$. Now, we have $H(2) = n$, so there is an $n$-dimensional space of quadrics $q'$ such that $I\subset (q')^{\perp}$. The fiber is isomorphic to the set of $g\in \GL_n$ which map $q$ into this space. The action of $\GL_n$ on quadrics is transitive, so the dimension of the fiber is $n + \binom{n}{2} = \binom{n+1}{2}$ as claimed.}
\end{proof}

\begin{corollary}[being a limit of saturated is independent of being in $\VPS$]\label{ref:saturableAndApolarImpliesSaturableInApolar:cor}
    We have $\VPSsbl = \VPS \cap \Satbar^H$ as {schemes}. For $n\leq
    13$, we have $\VPSsbl = \VPS \cap \Slip^H$ as schemes.
\end{corollary}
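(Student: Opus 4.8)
The plan is to prove the asserted equality of sets by a double inclusion, the inclusion $\VPSsbl \subseteq \VPS\cap\Satbar^H$ being purely formal and the reverse inclusion being the substantive one, for which I would use the orthogonalization output Proposition~\ref{ref:smoothness}. For the easy direction, recall that $\VPSsbl$ is by definition the closure of $\VPSgood$ inside $\VPS$, and since $\VPS$ is closed in $\Hilb^H$ this is the same as the closure taken in $\Hilb^H$. Now $\VPSgood \subseteq \Hilb^{H,\sat}$, and $\Satbar^H$ is by definition the closure of $\Hilb^{H,\sat}$, hence closed; taking closures gives $\VPSsbl \subseteq \Satbar^H$, and trivially $\VPSsbl\subseteq\VPS$, so $\VPSsbl \subseteq \VPS\cap\Satbar^H$.

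For the reverse inclusion I would apply Proposition~\ref{ref:smoothness} with $\mathcal Z = \Satbar^H$. This is legitimate: $\Satbar^H$ is closed in $\Hilb^H$, hence locally closed, and it is $\GL_n$-stable because a change of coordinates preserves saturation, so $\Hilb^{H,\sat}$ and therefore its closure $\Satbar^H$ are $\GL_n$-stable. We thus obtain a smooth morphism
\[
    \phi\colon \GL_n \times (\VPS\cap\Satbar^H) \longrightarrow \Satbar^H, \qquad (g,[I])\mapsto [g\circ I].
\]
The key elementary observation is that $\phi^{-1}(\Hilb^{H,\sat}) = \GL_n\times\VPSgood$: indeed $g\circ I$ is saturated if and only if $I$ is, and then $[I]\in \VPS\cap\Hilb^{H,\sat} = \VPSgood$. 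Now fix $[I]\in\VPS\cap\Satbar^H$. Since smooth morphisms are open, $\phi$ is open; for any open neighbourhood $O$ of $(\Id,[I])$ the set $\phi(O)$ is an open neighbourhood of $[I]$ in $\Satbar^H$, and because $\Hilb^{H,\sat}$ is dense in $\Satbar^H$ we get $\phi(O)\cap\Hilb^{H,\sat}\neq\emptyset$, hence $O\cap\phi^{-1}(\Hilb^{H,\sat}) = O\cap(\GL_n\times\VPSgood)\neq\emptyset$. As $O$ was arbitrary, $(\Id,[I])$ lies in the closure of $\GL_n\times\VPSgood$ inside $\GL_n\times(\VPS\cap\Satbar^H)$, which equals $\GL_n\times\bigl(\VPSsbl\cap\Satbar^H\bigr) = \GL_n\times\VPSsbl$ (using the easy inclusion already proved). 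Projecting to the second factor gives $[I]\in\VPSsbl$, so $\VPS\cap\Satbar^H\subseteq\VPSsbl$ and equality follows.

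The main obstacle is essentially bookkeeping rather than a deep difficulty: one must verify that Proposition~\ref{ref:smoothness} genuinely applies to the non-reduced, reducible scheme $\Satbar^H$ (i.e.\ that it is $\GL_n$-stable and locally closed), and one must track carefully which ambient space each closure is taken in; the topological step — that an open map pulls back the relation ``lies in the closure of a dense subset'' — is immediate once the smooth morphism $\phi$ is in hand. Finally, for the last sentence of the statement, I would simply invoke the fact recalled in the introduction that $\Satbar^H = \Slip^H$ whenever $n\leq 13$, which turns $\VPSsbl = \VPS\cap\Satbar^H$ into $\VPSsbl = \VPS\cap\Slip^H$.
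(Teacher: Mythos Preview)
Your proof is correct and follows the same approach as the paper: the easy inclusion is handled formally, the substantive inclusion is obtained by applying Proposition~\ref{ref:smoothness} with $\mathcal{Z}=\Satbar^H$, and the final sentence reduces to $\Satbar^H=\Slip^H$ for $n\leq 13$. The paper's own proof is extremely terse (it simply says ``Using Proposition~\ref{ref:smoothness} for $\mathcal{Z} = \Satbar^H$ we get the other inclusion on the level of sets''), whereas you have spelled out the topological mechanism---openness of smooth maps and the identification $\phi^{-1}(\Hilb^{H,\sat})=\GL_n\times\VPSgood$---that makes this work; this is exactly the intended argument.
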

\begin{proof}
    By definition, we have $\VPSsbl \subseteq \VPS \cap \Satbar^H$ as schemes. {To prove the other inclusion, pick a point $[I]\in \VPS\cap \Satbar^H$ and let $A = \hat{\OO}_{\Satbar^H, [I]}$ be the complete local ring of $[I]\in \Satbar^H$. Since $A$ is complete, by applying Theorem~\ref{thm:orthogonalization} to finite order truncations of the map $\phi\colon\Spec(A)\to \Satbar^H$ we obtain a pair of maps $g\colon \Spec(A)\to \GL_n$ and $\psi\colon\Spec(A)\to \VPS\cap \Satbar^H$ with $\phi = g\cdot \psi$. Pick any generic point $\eta\in\Spec(A)$. Since $\Satbar^H$ is the closure of saturated locus, the ideal $\phi(\eta)$ is saturated. We have $\phi(\eta) = g(\eta)\cdot \psi(\eta)$, so also the ideal $\psi(\eta)$ is saturated. This ideal is apolar to $Q$, thus $\psi(\eta)$ lies in $\VPSgood$. This holds for every generic point $\eta$, so $\psi$ factors through the closure of $\VPSgood$, that is, through $\VPSsbl$. This proves the other containment from the statement.}
    For $n\leq 13$ all Gorenstein algebras are smoothable, {hence the locus $\VPSgood$ is contained {scheme-theoretically} in $\Slip^H$ and so its closure $\VPSsbl$ is contained in $\Slip^H$. This yields (still for $n\leq 13$) a chain of inclusions of {schemes}
    \[
    \VPSsbl \subseteq \VPS \cap \Slip^H \subseteq \VPS\cap \Satbar^H = \VPSsbl,
    \]
    which then have to be equalities.
    }
\end{proof}
\begin{corollary}\label{unsaturatedlimits4}
Let $\Gamma$ be a scheme of length $4$ contained in a line in $Q^{-1}$ and let $I=I_\Gamma\cap q^\perp$, then $I$ is an unsaturated limit ideal in $\VPSsbl$.
\end{corollary}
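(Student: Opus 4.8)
Throughout, $n=4$, so $Q^{-1}\subset\PP^3$ is a smooth quadric surface carrying two rulings of lines. The plan is to identify the boundary $\VPSsbl\cap\VPSuns$ with the locus
\[
\mathcal W:=\{\,I_{\Gamma'}\cap q^\perp \ :\ \Gamma'\subset\PP^3 \text{ of length }4 \text{ contained in a line }L\subseteq Q^{-1}\,\},
\]
bootstrapping from the single instance in Example~\ref{ex:nonsaturatedlimit,n=4} together with the ruling-interchanging symmetry of $Q^{-1}$. First I would record that $[I]\in\VPS$ is unsaturated with Hilbert function $H$: since $q$ has full rank, $q^\perp_1=0$ and $q^\perp_d=S_d$ for $d\ge3$, so $I_1=0$, $I_d=(I_\Gamma)_d$ for $d\ge3$, and $\dim I_2=\dim\bigl((I_\Gamma)_2\cap q^\perp_2\bigr)=6$, because $q^\perp_2$ is a hyperplane in $S_2$ not containing $(I_\Gamma)_2$ (otherwise $\Gamma$ would be apolar to $Q$, impossible as an apolar scheme of a full rank quadric is linearly normal by Proposition~\ref{ref:charOfPossiblyApolar:prop}, whereas $\Gamma$ lies on a line). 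Hence $[I]\in\Hilb^H$, so $[I]\in\VPS$, and $I^{\sat}=I_\Gamma$ because the two ideals agree in all large degrees; the same computation shows that $I_\Gamma\cap q^\perp$ is the \emph{unique} ideal in $\VPS$ with saturation $I_\Gamma$ (any such $J$ satisfies $J\subseteq I_\Gamma\cap q^\perp$ and has Hilbert function $H$, hence is equal to it). In particular $[I]\in\VPS\cap\VPSuns$.

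Next I would study $\mathcal W$. The assignment $(L,\Gamma')\mapsto I_{\Gamma'}\cap q^\perp$ is a finite surjective morphism onto $\mathcal W$ from the $\Hilb_4(\PP^1)\cong\PP^4$-bundle over the Fano scheme of lines in $Q^{-1}$ (a disjoint union of two $\PP^1$'s), and it is injective since $(I_{\Gamma'}\cap q^\perp)^{\sat}=I_{\Gamma'}$ recovers $\Gamma'$, hence $L$. Thus $\mathcal W$ is closed in $\VPS$ and has exactly two irreducible components $\mathcal W_1,\mathcal W_2$, each of dimension $5$. By Corollary~\ref{linesinquadric} we have $\VPSsbl\cap\VPSuns\subseteq\mathcal W$. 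On the other hand $\VPSsbl\cap\VPSuns=\VPSsbl\setminus\VPSgood$ is a union of divisors in $\VPSsbl$ (by the proposition that each boundary component is a divisor), and $\VPSsbl$ is irreducible by Theorem~\ref{ref:reducibility:thm} (as $n\le13$) of dimension $\binom{4}{2}=6$; so every component of $\VPSsbl\cap\VPSuns$ is irreducible of dimension $5$, hence, being a connected subset of $\mathcal W_1\sqcup\mathcal W_2$, equals $\mathcal W_1$ or $\mathcal W_2$. It is nonempty: by Corollary~\ref{ref:saturableAndApolarImpliesSaturableInApolar:cor} the limit ideal $I_{\lim}$ of Example~\ref{ex:nonsaturatedlimit,n=4} lies in $\VPSsbl\cap\VPSuns$, and by the uniqueness above $I_{\lim}=I_{\Gamma_0}\cap q^\perp$ with $\Gamma_0=V(x_3,x_4,x_2^4)$ supported on the line $V(x_3,x_4)\subset Q^{-1}$, so $I_{\lim}\in\mathcal W$. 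Hence one component, say $\mathcal W_1$, satisfies $\mathcal W_1\subseteq\VPSsbl\cap\VPSuns$.

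Finally I would bring in the symmetry group. The group $\mathrm{O}(q)\subset\GL_4$ acts on $\PP^3$ preserving $Q$ up to scalar, hence preserving $q^\perp$, hence $\VPS$, $\VPSgood$, and therefore $\VPSsbl$ and $\VPSuns$; moreover it sends $I_{\Gamma'}\cap q^\perp$ to $I_{g\Gamma'}\cap q^\perp$, so it preserves $\mathcal W$, and since $\mathrm{PGO}_4\cong(\PGL_2\times\PGL_2)\rtimes\ZZ/2$ contains an element $g$ interchanging the two rulings of $Q^{-1}$ we get $g\mathcal W_1=\mathcal W_2$. Therefore $\mathcal W_2=g\mathcal W_1\subseteq g(\VPSsbl\cap\VPSuns)=\VPSsbl\cap\VPSuns$, so $\mathcal W=\mathcal W_1\cup\mathcal W_2\subseteq\VPSsbl\cap\VPSuns$. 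In particular $[I]\in\mathcal W\subseteq\VPSsbl$, and by the first step $[I]$ is unsaturated, as claimed.

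The crux is the identification $\VPSsbl\cap\VPSuns=\mathcal W$: it depends on the pure-codimension-one structure of the boundary together with $\dim\VPSsbl=6$ (which I would cite, or deduce from the $6$-dimensional family of polar simplices in $\VPSgood$), and on reading Example~\ref{ex:nonsaturatedlimit,n=4} through the uniqueness of the first step as producing a point of $\mathcal W$. A self-contained alternative, avoiding the global dimension input, is to construct directly, for a general $\Gamma\subset L\subset Q^{-1}$, a one-parameter family of spanning apolar schemes degenerating to $\Gamma$ — writing $q$ as a limit $\sum_i c_i(t)\,\ell_i(t)^2$ with $[\ell_i(t)]$ tending to the points of $\Gamma$ and the transverse directions spanning an isotropic plane of $q^{-1}$ on a complement of $\langle L\rangle$ — but then one must verify that the successive obstructions in $t$ vanish, which is precisely where the hypothesis $L\subseteq Q^{-1}$ is used.
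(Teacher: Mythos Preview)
Your argument is correct and takes a genuinely different route from the paper's. The paper's proof is two lines: it invokes \cite[Proposition~4.2]{JM} directly to conclude that $I=I_\Gamma\cap q^\perp$ lies in $\Slip^H$ (this is the \emph{sufficiency} counterpart to the necessary condition recorded here as Proposition~\ref{prop:necessaryForLimitOfSat}), and then applies Corollary~\ref{ref:saturableAndApolarImpliesSaturableInApolar:cor} to get $I\in\VPSsbl$. Your approach instead avoids that external sufficiency criterion entirely: you establish the containment $\VPSsbl\cap\VPSuns\subseteq\mathcal W$ from Corollary~\ref{linesinquadric}, use pure-codimension-one of the boundary together with $\dim\VPSsbl=6$ and irreducibility to force any component of the boundary to be a full $\mathcal W_i$, seed one component with the explicit limit from Example~\ref{ex:nonsaturatedlimit,n=4}, and then use the $\mathrm O(q)$-symmetry swapping the two rulings of $Q^{-1}$ to obtain the other. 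This is longer but has two payoffs: it is internal to the paper (no appeal to the sufficiency direction in \cite{JM}), and it simultaneously proves the stronger statement $\VPSsbl\cap\VPSuns=\mathcal W$, which the paper assembles only by combining Corollaries~\ref{linesinquadric} and~\ref{unsaturatedlimits4}. Two small remarks: your invocation of Corollary~\ref{ref:saturableAndApolarImpliesSaturableInApolar:cor} to place $I_{\lim}$ in $\VPSsbl$ is unnecessary, since $I_{\lim}$ is by construction a limit of points of $\VPSgood$; and your justification that $(I_\Gamma)_2\not\subseteq q^\perp_2$ via Proposition~\ref{ref:charOfPossiblyApolar:prop} is fine once one reads ``apolar'' in the geometric sense $[q]\in\langle v_2(\Gamma)\rangle$ used in the introduction (equivalently, one can note directly that $(I_\Gamma)_2=(I_L)_1\cdot S_1$ and that $q$ has full rank).
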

\begin{proof} $I$ is an unsaturated limit ideal by \cite[Proposition 4.2]{JM} and it is apolar to $Q$, hence lies in $\VPSsbl$ by Corollary \ref{ref:saturableAndApolarImpliesSaturableInApolar:cor}.
\end{proof}
\subsection{Unsaturated limit ideals in the case $n=5$}

With reference to \cite[\S4.4.2 and \S4.4.3]{JM}, we will describe $\VPSsbl\cap \VPSuns,$ when $n=5$.

For $[I]\in \VPS
\setminus \VPSgood$, by Proposition~\ref{prop:regularity}, the
length five scheme $\Gamma \subseteq \PP^4$ contains a length four subscheme
$\Gamma'$ contained in a line.
It follows that $\VPSuns$ is a union of two families $\mathcal{F}_1,
\mathcal{F}_{2}$, described as follows.

A general point of $\mathcal{F}_1$ is an
ideal $I$ such that $V(I^{\sat}) = \Gamma' \sqcup \{p\}$, where $p$ is a point
and $\Gamma'$ has length four and is contained in a line. For general such
$\Gamma'$ and $p$, the Hilbert function of $S/I^{\sat}$ is $(1,3,4,5,5,
\ldots)$, so $I = I^{\sat} \cap q^\perp$; the ideal $I$ is determined by
$I^{\sat}$ as in the case of $n=4$. The family $\mathcal{F}_1$ is irreducible and has dimension
$6+4+4$.

For the family $\mathcal{F}_{2}$ a general point is an
ideal $I$ such that $\Gamma = V(I^{\sat})$ is a length five scheme on a line. The ideal $I$ is obtained from $I^{\sat}$ as the
intersection $I = I^{\sat} \cap c^{\perp} \cap q^\perp$ where $c$ is a cubic form such that $H_{S/I}(2) = 5$. We do not know
whether the family $\mathcal{F}_2$ is irreducible or what is its dimension.
%Let $\mathcal{F}_{2'}\subseteq \mathcal{F}_{2}$ be obtained by taking the
%cubic $c$ to be a perfect cube.

Let us now consider the saturable elements of both families. It follows from
Corollaries~\ref{kri}-\ref{linesinquadric} and from~\cite[\S4.4.2]{JM} that $\mathcal{F}_1 \cap
\VPSsbl$ consists exactly of ideals $I$ such that the line $L = \langle \Gamma'\rangle$ is
contained in $Q^{-1}$ and $p$ is contained in the plane polar to $L$, here $p$
is such that $\Supp \Gamma = \{p\} \cup \Supp \Gamma'$ counting multiplicities. The set of unsaturated limit ideals in $\mathcal{F}_1$  has dimension $3+4+2$.
In the case $\mathcal{F}_2$, again by Corollary~\ref{linesinquadric} the line
$\langle \Gamma\rangle$ must be contained in $Q^{-1}$. Moreover
by~\cite[\S4.4.3]{JM}, the cube $c$
must be of the form $\ell^2\cdot \mu$, where $\ell$ is a linear form on the
line and $\mu$ is a linear form on the polar plane. The set of unsaturated limit ideals in $\mathcal{F}_2$ has
dimension $3 + 5 + 1$.
\begin{remark}\label{contraction}
When $n=4,5$ and $I$ is an unsaturated limit ideal, then the map $$\pi_G: {\VPSsbl}\setminus\VPSgood\to{\VPSsbl}_G;\quad [I]\mapsto [I_2]$$ is a forgetful map with positive dimensional fibers. The boundary ${\VPSsbl}\setminus \VPSgood$ is a divisor by Proposition \ref{boundarydivisors}, and has dimension $5$ and $9$ when $n=4$ and $n=5$ respectively. The ideal $I_2$ vanishes on the line that contains a subscheme of length four (or five) of $V(I^{\rm sat}).$    In cases $n=4$ and $n=5$ with $I\in \mathcal{F}_1$, the ideal $I_2$ depend only on a line, respectively a line and a point, so the family of ideals $I_2$ in the image of $\pi_G$ has dimension one and five respectively.  In case  $n=5$ and  $I\in \mathcal{F}_2$, the image $I_2\in{\VPSsbl}_G$, depend on the line and a special cubic form, so the family of ideals $I_2$ in the image of $\pi_G$ has dimension at most $6$.
\end{remark}

\section{The schemes $\VPSsbl$ and $\VPSsbl_G$ for $n\leq 5$}\label{globalproperties}

As above, we let $H = (1,n,n, \ldots)$.  
It follows from Proposition~\ref{prop:regularity} that for $n\leq 3$ the schemes
$\VPSsbl$ and $\VPSsbl_G$ are both equal to $\VPSgood$ and hence smooth. We show
they are both smooth also for $n=4,5$. For $n\geq 6$, already $\VPSgood$ is singular.

The idea of the proof is to prove that $\Slip^H$ is smooth in the unsaturated
limit ideals of ${\VPSsbl}$, then use Proposition~\ref{ref:smoothness} and Corollary~\ref{ref:saturableAndApolarImpliesSaturableInApolar:cor} for $\mathcal{Z} = \Satbar^H = \Slip^H$ to deduce the same for ${\VPSsbl}$ itself and
finally prove the smoothness for their projections in the Grassmannian.
%Let $H = (1,n,n, \ldots )$.
We begin by showing that $\Slip^H$ is smooth at
the unsaturated limit ideals of $\VPS$.
\begin{proposition}\label{prop:SlipSmooth}
    Let $n=4,5$ and consider a homogeneous ideal $[I]\in \Slip^H$ such that
    $H_{S/I^{\sat}} = (1,n-2, n-1, n, n,  \ldots )$. Then $[I]$ is a smooth point on $\Slip^H$.
\end{proposition}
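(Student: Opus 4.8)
The plan is to reduce the statement to a bound on the Zariski tangent space of $\Hilb^H$ at $[I]$. Since $n\le 13$, the scheme $\Slip^H$ is an irreducible component of $\Hilb^H$, of dimension equal to that of the smoothable family of $n$ points in $\PP^{n-1}$, namely $n(n-1)$. As $[I]$ lies on this $n(n-1)$-dimensional component we automatically have $\dim_k\Hom_S(I,S/I)_0\ge n(n-1)$, so it is enough to prove
\[
    \dim_k\Hom_S(I,S/I)_0\le n(n-1);
\]
then $\Hilb^H$ is regular at $[I]$, $\Slip^H$ is the unique component through $[I]$, and in particular $\Slip^H$ is smooth at $[I]$.

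To estimate this $\Hom$-space I would first put $I$ into normal form. Set $\Gamma=V(I^{\sat})$. By Proposition~\ref{prop:regularity}, $\Gamma$ is a length-$4$ curvilinear scheme on a line $L$ when $n=4$, and such a scheme together with a disjoint point when $n=5$; in both cases $\Gamma$ is a disjoint union of curvilinear schemes, hence a smooth point of $\Hilb_n(\PP^{n-1})$, and consequently $\dim_k\Hom_S(I^{\sat},S/I^{\sat})_0=n(n-1)$. Comparing Hilbert functions, $I$ and $I^{\sat}$ coincide in all degrees $\ge 3$, while $I_1=0\subsetneq(I^{\sat})_1$ (of dimension $2$) and $I_2$ is a hyperplane of $(I^{\sat})_2$; Proposition~\ref{prop:necessaryForLimitOfSat} adds the requirement $(I^{\sat}\cdot I_L)_2\subseteq I_2$. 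Using the $\GL_n$-action I normalize $L$, the scheme it carries, and the extra point, so that $I^{\sat}$ becomes explicit and $I$ is pinned down by the choice of a hyperplane $I_2$ of $(I^{\sat})_2$ containing the fixed subspace $(I^{\sat}\cdot I_L)_2$; this leaves a finite-dimensional, and up to the residual symmetry essentially finite, family of explicit ideals. The further conditions cutting out inside this family the locus that actually lies in $\Slip^H$ are supplied by~\cite[\S4.4.2 and \S4.4.3]{JM}.

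On these normal forms I would compute $\Hom_S(I,S/I)_0$ using the two short exact sequences
\[
    0\to I\to I^{\sat}\to M\to 0,\qquad 0\to M\to S/I\to S/I^{\sat}\to 0,
\]
where $M:=I^{\sat}/I$ has Hilbert function $(0,2,1,0,\dots)$, i.e.\ $M\cong k(-1)^{2}\oplus k(-2)$ as a graded vector space and is supported at the irrelevant ideal. Applying $\Hom_S(-,S/I)$ to the first sequence and $\Hom_S(I^{\sat},-)$ to the second expresses $\dim_k\Hom_S(I,S/I)_0$ through the degree-$0$ parts of $\Hom_S(M,S/I)$, $\Ext^1_S(M,S/I)$, $\Hom_S(I^{\sat},M)$, $\Ext^1_S(I^{\sat},M)$ and the already-known $\Hom_S(I^{\sat},S/I^{\sat})$, each of which can be read off from the explicit generators. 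The alternative, matching the paper's computational appendices, is to evaluate $\dim_k\Hom_S(I,S/I)_0$ outright on each normal form. Either way the value to be confirmed is $n(n-1)$, that is $12$ for $n=4$ and $20$ for $n=5$.

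The main difficulty is twofold. First, the list of normal forms has to be exhaustive, and the containment $I^{\sat}\cdot I_L\subseteq I$ is only shown to be \emph{necessary} for membership in $\Slip^H$; completeness must therefore be imported from~\cite{JM}, or else deduced by noting that the (irreducible) family of such $[I]$ degenerates within itself to a single most-special ideal, at which the tangent-space bound, once established, propagates by upper semicontinuity of $\dim_k\Hom_S(-,-)_0$. Second, the case $n=5$ is markedly heavier than $n=4$: $\Gamma$ no longer sits on a line, the module $M$ and the auxiliary $\Ext^1$ groups contribute more terms, and keeping the total from exceeding $20$ calls for careful control of $\Ext^1_S(M,S/I)_0$ and of its image under the connecting homomorphism. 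This last point is where I expect the real work to lie, and where I would lean most on the computations of~\cite{JM}.
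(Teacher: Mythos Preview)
Your strategy has a genuine gap: the inequality
\[
    \dim_k \Hom_S(I,S/I)_0 \le n(n-1)
\]
that you set out to prove is \emph{false} at these points. The paper notes this explicitly right after Corollary~\ref{ref:VSPborderSmooth:cor}: ``$\Hilb^H$ can be singular at the points as in the statement''. Concretely, the paper's own estimate gives
\[
    \dim_k T_{[I]}\Hilb^H \le (n-1)n - (2n-4) + \binom{n}{2} + 1,
\]
which is $15$ for $n=4$ and $25$ for $n=5$, and these bounds are in general attained. So you cannot hope to show that $\Hilb^H$ is regular at $[I]$; the extra directions really are there, and your reduction to a bound on $T_{[I]}\Hilb^H$ collapses.

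A related slip feeds into this: you assert $\dim_k\Hom_S(I^{\sat},S/I^{\sat})_0=n(n-1)$ because $\Gamma$ is a smooth point of $\Hilb_n(\PP^{n-1})$. But $\Hom_S(I^{\sat},S/I^{\sat})_0$ is the tangent space to the \emph{multigraded} Hilbert scheme $\Hilb^{H_{S/I^{\sat}}}$, which is identified with the locally closed stratum of $\Hilb_n(\PP^{n-1})$ where the Hilbert function equals $(1,n-2,n-1,n,\ldots)$, not with an open subset of $\Hilb_n(\PP^{n-1})$. Its dimension is $(n-1)(n-4)+2(n-2)+4=(n-1)n-2n+4$, not $n(n-1)$.

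What the paper does instead is bound $T_{[I]}\Slip^H$ directly. The key extra input is that the fibre $\varphi^{-1}([I^{\sat}])$ of $\varphi\colon\Hilb^H\to\Hilb_n(\PP^{n-1})$, which is a $\binom{n}{2}$-dimensional projective space parametrising the choice of the hyperplane $I_2\subset (I^{\sat})_2$, meets $\Slip^H$ only in the Grassmannian $\Gr{1}{(I^{\sat})_2/(I^{\sat}_1\cdot (I_L)_1)}$ of dimension $\binom{n}{2}-(2n-5)$. This is exactly where the necessary condition $I^{\sat}\cdot I_L\subseteq I$ of Proposition~\ref{prop:necessaryForLimitOfSat} enters, but now used scheme-theoretically to kill $2n-5$ tangent directions inside the fibre. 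Subtracting these gives
\[
    \dim_k T_{[I]}\Slip^H \le (n-1)n + \tfrac{1}{2}(n-4)(n-5),
\]
which equals $n(n-1)$ precisely for $n=4,5$. Your two short exact sequences are the right starting point, but the argument must be completed by this comparison of fibre versus fibre-restricted-to-$\Slip^H$, not by a global bound on $T_{[I]}\Hilb^H$.
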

\begin{proof}
    We will make an upper bound on the dimension of the tangent space to
    $\Slip^H$ at $[I]$. Unfortunately, this space has no functorial
    interpretation, hence we begin with the tangent space $T_{[I]}\Hilb^H
    \simeq \Hom_S(I, S/I)_{0}$.
    The short exact sequence $0\to I^{\sat}/I\to S/I\to S/I^{\sat}$ yields
    \[
        0\to \Hom_S(I, I^{\sat}/I)_0\to \Hom_S(I, S/I)_0\to
        \Hom_S(I, S/I^{\sat})_0.
    \]
    We have $(I^{\sat}/I)_3 = 0$, while $(I^{\sat}/I)_2$ is one-dimensional, so  
    $$\dim_{k}\Hom_S(I, I^{\sat}/I)_0=\dim_{k} I_2 =
    \binom{n}{2},$$
    and hence $$\dim_{k} T_{[I]}\Hilb^H\leq \dim_k\Hom_S(I, S/I^{\sat})_0+\binom{n}{2}.$$
    The short exact $0\to I\to I^{\sat}\to I^{\sat}/I\to 0$ yields
    \[
        0 \to \Hom_S(I^{\sat},
        S/I^{\sat})_0\to \Hom_S(I, S/I^{\sat})_0 \to \Ext^1(I^{\sat}/I,
        S/I^{\sat})_0.
    \]
    Using Proposition~\ref{prop:regularity} and \cite[Example~4.1]{JM}, we obtain
    that
    \[
        \dim_{k} \Ext^1(I^{\sat}/I, S/I^{\sat})_0 = 1.
    \]
    Since $n-2\leq 3$,
    by~\cite[Corollary~3.16]{JM} we obtain that the point $[I^{\sat}]\in
    \Hilb^{H_{S/I^{\sat}}}$ is smooth. The saturated locus of this Hilbert
    scheme maps isomorphically to its image in $\Hvanilla$,
    see~\cite[Proposition~3.9]{JM}, hence has dimension 
    $$\dim_k\Hom_S(I^{\sat},
        S/I^{\sat})_0=(n-1)(n-4) + 2(n-2)
    + 4 = (n-1)n - 2n+4,$$ where $(n-1)(n-4)$ comes from a choice of $n-4$
    points, $2(n-2)$ a choice of a line and $4$ from a choice of four points
    on it.
    In total we obtain a bound
    \begin{equation}\label{eq:tangentBound}
        \dim_{k} T_{[I]}\Hilb^H \leq (n-1)n - (2n-4) + \binom{n}{2} + 1.
    \end{equation}
    We now pass to $T_{[I]}\Slip^H$. The inclusion
    $\Hom_S(I, I^{\sat}/I)_{0} \subseteq \Hom_S(I, S/I)_0$ corresponds to the
    tangent map for the embedding of $\varphi^{-1}([I^{\sat}])\subseteq \Hilb^H$,
    where $\varphi\colon \Hilb^H\to \Hvanilla$ is the natural map.

    By Proposition~\ref{prop:necessaryForLimitOfSat} we see that the intersection
    $\varphi^{-1}([I^{\sat}]) \cap \Slip^H$ consists of $I$ such that
    $$I^{\sat}_1 \cdot (I_{L})_1 \subseteq I_2,$$ where $L$ is the line in $V(I^{\sat}_2)$. This is true even
    scheme-theoretically and shows that the intersection above is the
    Grassmannian $\Gr{1}{I^{\sat}_2/(I^{\sat}_1 \cdot (I_{L})_1)}$, which has
    dimension $\dim_{k} I^{\sat}_2 - 1 - \dim_{k} I^{\sat}_1 \cdot (I_{L})_1
    = \binom{n}{2} - (2n-5)$. Arguing as above, the
    estimate~\eqref{eq:tangentBound} yields
    \[
        \dim_{k} T_{[I]} \Slip^H \leq (n-1)n - (2n-4) + \binom{n}{2} + 1 -
        (2n-5) = (n-1)n + \frac{1}{2}(n-4)(n-5).
    \]
    The variety $\Slip^H$ is birational to the smoothable component of
    $\Hvanilla$, hence has dimension $(n-1)n$. This proves that
    $[I]\in \Slip^H$ is smooth for $n=4,5$.
\end{proof}

\begin{proposition}\label{prop:SlipSmoothSecondCase}
    Let $n=5$ and consider a homogeneous ideal $[I]\in \Slip^H$ such that
    $H_{S/I^{\sat}} = (1, 2, 3, 4, 5, 5, \ldots )$. Then $[I]$ is a smooth point on $\Slip^H$.
\end{proposition}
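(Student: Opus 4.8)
The plan is to bound $\dim_k T_{[I]}\Slip^H$ from above by $\dim \Slip^H = (n-1)n = 20$, following step by step the argument of Proposition~\ref{prop:SlipSmooth}, with the auxiliary modules adapted to the present Hilbert function. Here $\Gamma = V(I^{\sat})$ is a length $5$ subscheme of a line $L \subseteq \PP^4$, the module $I^{\sat}/I$ has Hilbert function $(0,3,2,1,0,0,\ldots)$, and by~\cite[\S4.4.3]{JM} (the analogue of Proposition~\ref{prop:necessaryForLimitOfSat}; compare the description of the family $\mathcal{F}_2$ in Section~\ref{section:unsat}) a saturable ideal $I$ with $I^{\sat} = I_\Gamma$ arises from a cubic $c = \ell^2\mu$ with $\ell$ a linear form on $L$, and satisfies $I^{\sat}\cdot I_L \subseteq I$. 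In particular $I^{\sat}/I$ is a module over the homogeneous coordinate ring $S_L = S/I_L \cong k[s,t]$ of the line.

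The first step consists of the same two exact sequences as before. From $0 \to I^{\sat}/I \to S/I \to S/I^{\sat} \to 0$ we obtain
\[
    0 \to \Hom_S(I, I^{\sat}/I)_0 \to \Hom_S(I, S/I)_0 \to \Hom_S(I, S/I^{\sat})_0,
\]
and from $0 \to I \to I^{\sat} \to I^{\sat}/I \to 0$,
\[
    0 \to \Hom_S(I^{\sat}, S/I^{\sat})_0 \to \Hom_S(I, S/I^{\sat})_0 \to \Ext^1_S(I^{\sat}/I, S/I^{\sat})_0.
\]
Since $H_{S/I^{\sat}}(1) = 2 \le 3$, the point $[I^{\sat}]$ is smooth on $\Hilb^{H_{S/I^{\sat}}}$ by~\cite[Corollary~3.16]{JM}, and its saturated locus maps isomorphically onto the locus of length $5$ schemes spanning a line in $\PP^4$ (see~\cite[Proposition~3.9]{JM}) — a line in $\PP^4$ together with a degree $5$ divisor on it — which has dimension $6 + 5 = 11$; hence $\dim_k\Hom_S(I^{\sat}, S/I^{\sat})_0 = 11$. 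Writing $e := \dim_k\Ext^1_S(I^{\sat}/I, S/I^{\sat})_0$, which I would compute from the minimal free resolution of the $S_L$-module $I^{\sat}/I$ together with the Koszul change-of-rings spectral sequence for $S \onto S_L$ (or read off directly from~\cite[\S4.4.3]{JM}), we get $\dim_k\Hom_S(I, S/I^{\sat})_0 \le 11 + e$.

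Next, pass to $\Slip^H$ exactly as in Proposition~\ref{prop:SlipSmooth}. The inclusion $\Hom_S(I, I^{\sat}/I)_0 \subseteq \Hom_S(I, S/I)_0 = T_{[I]}\Hilb^H$ is the tangent map of the embedding of the fibre $\varphi^{-1}([I^{\sat}]) \subseteq \Hilb^H$ of the natural map $\varphi\colon \Hilb^H \to \Hvanilla$, and the description above identifies $\varphi^{-1}([I^{\sat}]) \cap \Slip^H$ — scheme-theoretically — with the smooth variety parametrizing cubics $c = \ell^2\mu$, $\ell$ on $L$, whose associated ideal has Hilbert function $H$; call its dimension $f$. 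Then
\[
    T_{[I]}\Slip^H \cap \Hom_S(I, I^{\sat}/I)_0 = T_{[I]}\!\left(\Slip^H \cap \varphi^{-1}([I^{\sat}])\right)
\]
has dimension $f$, while the quotient of $T_{[I]}\Slip^H$ by this subspace injects, via the first exact sequence above, into $\Hom_S(I, S/I^{\sat})_0$ and so has dimension at most $11 + e$. Therefore $\dim_k T_{[I]}\Slip^H \le 11 + e + f$, and since $\Slip^H$ is irreducible of dimension $20$, the proposition reduces to the numerical identity $e + f = 9$.

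Verifying $e + f = 9$ is the crux, and it splits into two independent computations. The value of $f$ requires a precise understanding of $\varphi^{-1}([I^{\sat}]) \cap \Slip^H$: which cubics $c = \ell^2\mu$ give an ideal of Hilbert function $H$ that is saturable, and when two such cubics yield the same ideal — this, together with the smoothness of that fibre which the argument needs, is exactly the content of~\cite[\S4.4.3]{JM}. The value of $e$ is subtler than in Proposition~\ref{prop:SlipSmooth}, where $I^{\sat}/I$ was concentrated in a single degree and $\Ext^1$ was visibly one-dimensional; here $I^{\sat}/I$ occupies three degrees, so one must exploit its $S_L$-module structure (via the spectral sequence above) to pin $e$ down. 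Granting these two computations and the identity $e + f = 9$, smoothness of $[I] \in \Slip^H$ follows as in the previous proposition.
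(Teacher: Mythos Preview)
Your approach is the paper's. The paper simply fills in the two numbers you leave open, both by citing~\cite[\S4.4.3]{JM}: the fibre $\varphi^{-1}([I^{\sat}])$ is smooth and rational of dimension $14$, and its scheme-theoretic intersection with $\Slip^H$ is smooth and rational of dimension $8$, so $f=8$; the paper also asserts that $\Hom_S(I,S/I^{\sat})_0$ is at most $12$-dimensional, i.e.\ $e\le 1$. Thus $e+f\le 9$ and your inequality $\dim_k T_{[I]}\Slip^H\le 11+e+f$ gives the required bound $20$. The paper packages the final step as $26-14+8=20$, using the ambient fibre dimension $14$ explicitly, but this is the same linear algebra as yours. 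One caution: your description of $\varphi^{-1}([I^{\sat}])\cap\Slip^H$ as ``cubics $c=\ell^2\mu$'' is only part of the parameterization in~\cite[\S4.4.3]{JM}; on its own it accounts for fewer than $8$ moduli, so when you carry out the computation you will need the full description there rather than just the cubic.
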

\begin{proof}
    The strategy of the proof is the same as in
    Proposition~\ref{prop:SlipSmooth}. As in that proposition, we argue that
    the tangent space at $[I^{\sat}]\in \Hilb^{(1,2,3,4, \ldots ,5)}$ is
    $11$-dimensional because the locus of saturated ideals with this Hilbert
    function is parameterized by $6$-dimensional choice of line and
    $5$-dimensional choice of five points. It follows that $T_{[I]}\Hilb^H$ is
    an extension of the at most $12$-dimensional space $V := \Hom_S(I, S/I^{\sat})$ by
    $\Hom_S(I, I^{\sat}/I)$. The latter space is the tangent space to the
    fiber $\pi^{-1}([I^{\sat}])$ of $\pi\colon \Hilb^H\to \Hvanilla$. This fiber is smooth and
    rational of dimension $14$, its parameterization is given
    in~\cite[\S4.4.3]{JM}. Also by~\cite[\S4.4.3]{JM} the intersection
    $\pi^{-1}([I])\cap \Slip^H$ is smooth and rational of dimension $8$. It
    follows that inside the $26$-dimensional space $T_{[I]}\Hilb^H$ the
    intersection of
    $T_{[I]} \Slip^H$ with $T_{[I]}\pi^{-1}([I^{\sat}])$ is $8$-dimensional.
    The latter space is $14$-dimensional, hence the dimension of $T_{[I]}\Slip^H$ is at most
    $26 - 14 + 8 =20$, which coincides with the dimension of $\Slip^H$, so equality holds and $|I]$ is a smooth point on $\Slip^H$.
\end{proof}

\begin{corollary}\label{ref:VSPborderSmooth:cor}
    For $n=4,5$ the variety $\VPSsbl$ is smooth.
\end{corollary}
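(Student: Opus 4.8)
The plan is to combine the three preceding results with the orthogonalization machinery to transfer smoothness from $\Slip^H$ to $\VPSsbl$. First I would recall that for $n\leq 13$ (so in particular for $n=4,5$) all Gorenstein algebras are smoothable, so $\Satbar^H = \Slip^H$ by Corollary~\ref{ref:saturableAndApolarImpliesSaturableInApolar:cor}; moreover that same corollary gives $\VPSsbl = \VPS\cap \Slip^H$ as sets, so a point of $\VPSsbl$ is exactly a point of $\VPS$ lying on $\Slip^H$. Now apply Proposition~\ref{ref:smoothness} with $\mathcal{Z} = \Slip^H$ (this is $\GL_n$-stable since the apolarity/saturability conditions are $\GL_n$-equivariant): the action map $\GL_n\times(\VPS\cap \Slip^H) = \GL_n\times \VPSsbl \to \Slip^H$ is smooth with fibers of dimension $\binom{n+1}{2}$.

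Next I would show the source of this smooth map is smooth wherever $\Slip^H$ is. Smoothness of the map means that locally, up to smooth factors, $\GL_n\times \VPSsbl$ looks like $\Slip^H$; since $\GL_n$ is smooth, smoothness of $\GL_n\times\VPSsbl$ at a point is equivalent to smoothness of $\VPSsbl$ at the projected point, and a smooth morphism pulls back smooth points of the target to smooth points of the source (the source of a smooth morphism is regular at any point mapping to a regular point of the target, by~\cite[{\href{https://stacks.math.columbia.edu/tag/01V8}{Tag 01V8}}]{stacks-project}). Hence it suffices to check that $\Slip^H$ is smooth at every point in the image of $\GL_n\times \VPSsbl$, i.e.\ at every $[I]\in \Slip^H$ of the form $g\circ I_0$ with $[I_0]\in \VPSsbl$.

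So I would reduce to: $\Slip^H$ is smooth at every unsaturated limit ideal $[I]\in \VPSsbl$ (the saturated points being automatically smooth, as they correspond to Gorenstein algebras of degree $n\leq 13$, which are unobstructed in this range). By Proposition~\ref{prop:regularity}, for $n=4,5$ any unsaturated limit ideal $[I]\in \VPS$ has $H_{S/I^{\sat}}$ equal either to $(1,n-2,n-1,n,n,\ldots)$ or, for $n=5$, to $(1,2,3,4,5,5,\ldots)$. In the first case Proposition~\ref{prop:SlipSmooth} shows $[I]$ is a smooth point of $\Slip^H$; in the second case Proposition~\ref{prop:SlipSmoothSecondCase} does the same. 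This exhausts all possibilities, completing the argument. The one point requiring care — and the step I expect to be the main obstacle to a fully clean writeup rather than to the logic itself — is the bookkeeping that $\Slip^H$ has the expected dimension $(n-1)n$ and that the numerical tangent-space bounds in the two preceding propositions are exactly matched, so that the upper bounds become equalities and smoothness (not just a dimension count) follows; but this is already carried out in Propositions~\ref{prop:SlipSmooth} and~\ref{prop:SlipSmoothSecondCase}, so here it only needs to be invoked.
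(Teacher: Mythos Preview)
Your proposal is correct and follows essentially the same route as the paper: use Proposition~\ref{ref:smoothness} with $\mathcal{Z}=\Slip^H$ to reduce to smoothness of $\Slip^H$ at the relevant points, then invoke Propositions~\ref{prop:SlipSmooth} and~\ref{prop:SlipSmoothSecondCase} for the unsaturated limits and the unobstructedness of the Gorenstein locus for the saturated ones (the paper phrases the latter via Proposition~\ref{smoothMap} rather than via $\Slip^H$, but the content is the same). One phrasing nit: your parenthetical ``Gorenstein algebras of degree $n\leq 13$, which are unobstructed in this range'' is misleading, since Gorenstein algebras of degree $\geq 6$ can be obstructed; what you actually need and use is only $n=4,5$.
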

\begin{proof}
    Being smooth is preserved by smooth maps. It follows that the good part
    $\VPSgood$ is smooth by Proposition~\ref{smoothMap}. The variety
    $\VPSsbl$ is smooth at unsaturated limit ideals  by
    Proposition~\ref{ref:smoothness} {with Corollary~\ref{ref:saturableAndApolarImpliesSaturableInApolar:cor}} and
    Propositions~\ref{prop:SlipSmooth}-\ref{prop:SlipSmoothSecondCase}.
\end{proof}
Some natural simplified versions of Proposition~\ref{prop:SlipSmooth} are
false: for example $\Hilb^H$ can be singular at the points as in
the statement and
$\Slip^H$ can be singular away from the locus in the Proposition; both pathologies occur for $n=4$.
It would be interesting to know whether Proposition~\ref{prop:SlipSmooth}
holds for higher $n$.

\begin{remark}\label{ref:VSPinHilb:remark}
   {\rm We do not know whether $\VSPn\subset \Hvanilla$ is smooth even for $n=4,5$. The map $\varphi\colon \Slip^H\to
    \Hvanilla$ restricts to a map $\VPSsbl\to \VSPn$ which is an isomorphism
    on $\VPSgood$. For $n=4$ this map is also bijective on points, because for an
    unsaturated
    limit ideal $I$ we have $I = I^{\sat} \cap q^\perp$. This shows
    that for $n=4$ the variety $\VPSsbl$ is the normalization of $\VSPn$.}
\end{remark}

Now we prove that also the Grassmannian compactification
$$\VPSsbl_G\subset \Gr{\binom{n}{2}}{q^\perp_2}\subset \Gr{\binom{n}{2}}{S_2}$$ is smooth for $n=4,5$.
One cannot hope to have Proposition~\ref{ref:smoothness} for $\VPSsbl_G$ and
$\Gr{\binom{n}{2}}{S_2}$ because the latter lacks structure. But there is an
analogue of the Hilbert scheme in this setup. We define the \emph{syzygetic
locus} $\Syz\subseteq \Gr{\binom{n}{2}}{S_2}$ as the closed subscheme given by the
determinantal equations which on a point $[V]\in \Gr{\binom{n}{2}}{S_2}$ boil
down to $H_{S/(V)}(3) \geq n$, where $(V)\subset S$ is the ideal generated by $V\subset S_2$.
The tangent space to $[V]\in \Syz$ at a point where $H_{S/(V)}(3) = n$ is given by
\[
    \Hom_S\left((V) + S_{\geq 4},\, \frac{S}{(V)+S_{\geq 4}}\right)_0.
\]
{The name \emph{compactification} is justified by the following.
\begin{proposition}\label{prop:goodlocusInGrassmannian}
    The map $\pi_G\colon \Hgood\to \Syz$, given by $\pi_G(I) = I + S_{\geq 4}$ is an open immersion.
    In particular, the map $$\pi_G\colon\VPSgood\to \VPSsbl_G$$ is an isomorphism onto $\VPSgood_G$.
\end{proposition}
\begin{proof}
    A point of $\Hgood$ corresponds to a saturated ideal $I$ such that the quotient by $I$ has Hilbert function $H$. It follows that the regularity of the quotient is one and the regularity of $I$ is two. By~\cite[Proposition~3.1]{erman_Murphys_law_for_punctual_Hilb} the complete local rings of $[I]\in \Hgood$ and $\pi_G([I])\in \Syz$ are isomorphic. It follows~\cite[\href{https://stacks.math.columbia.edu/tag/039M}{Tag 039M}]{stacks-project}  that $\pi_G$ is \'etale at $[I]$. From the regularity it also follows that $I$ is generated by $I_2$, so $\pi_G$ is injective on points. It follows~ \cite[\href{https://stacks.math.columbia.edu/tag/025G}{Tag 025G}]{stacks-project} that this map is an open immersion. The result on $\VPSgood$ follows by intersecting with the locus apolar to $Q$.
\end{proof}}

\begin{proposition}\label{prop:GrassSlipSmooth}
    Let $n=4$, let $[I]\in \VPSsbl$ be an unsaturated limit ideal and let $[I_2]=\pi_G([I])\in \VPSsbl_G$. Then $[I_2]$ is a smooth
    point of the image of $\Slip^H$ in $\Gr{6}{S_2}$.
\end{proposition}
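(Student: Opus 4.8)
The strategy parallels Proposition~\ref{prop:SlipSmooth}: bound the dimension of the tangent space to $\Syz_4$ at $[I_2]$ and compare it with $\dim \Slip^H = 12$ (for $n=4$), which bounds $\dim \Syz_4$ from below along the image of $\Slip^H$. Since $n=4$, we have $\binom{n}{2}=6$ and we work inside $\Gr{6}{S_2}$. By Proposition~\ref{prop:regularity}, $H_{S/I^{\sat}}=(1,2,3,4,4,\ldots)$ and $V(I_2^{\sat})$ is a line $L$; by Corollary~\ref{linesinquadric}, $I = I^{\sat}\cap q^\perp$ and $I_2=(I^{\sat})_2$, so $[I_2]$ records exactly the $6$-dimensional space of quadrics through the length-$4$ subscheme $\Gamma'\subseteq L$. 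The point of passing to the Grassmannian is that the fibre of $\pi_G|_{\Slip^H}$ over $[I_2]$ is positive-dimensional (it includes the freedom in choosing a cubic, i.e.\ the Grassmannian $\Gr{1}{I_2^{\sat}/(I^{\sat}_1\cdot (I_L)_1)}$ from the proof of Proposition~\ref{prop:SlipSmooth}), and quotienting by this fibre sharpens the tangent estimate.

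First I would identify the tangent space to $\Syz_4$ at $[I_2]$ with $\Hom_S((I_2)+S_{\geq 4},\, S/((I_2)+S_{\geq 4}))_0$, as recorded in the excerpt, where $(I_2)$ denotes the ideal generated by the $6$ quadrics. Writing $J := (I_2)+S_{\geq 4}$, note $J$ is an ideal with $H_{S/J}=(1,4,4,4,0,0,\ldots)$ and $J^{\sat}=I^{\sat}$ (the saturation only sees degrees $\geq 3$ and there $J$ agrees with $I$ up to the top-degree truncation, while $I^{\sat}$ has Hilbert function stabilising at $4$). I would then run the same two short exact sequence arguments as in Proposition~\ref{prop:SlipSmooth}: from $0\to I^{\sat}/J\to S/J\to S/I^{\sat}\to 0$ and $0\to J\to I^{\sat}\to I^{\sat}/J\to 0$ one gets
\[
    \dim_k T_{[I_2]}\Syz_4 \leq \dim_k\Hom_S(I^{\sat}, S/I^{\sat})_0 + \dim_k\Hom_S(J, I^{\sat}/J)_0 + \dim_k \Ext^1_S(I^{\sat}/J, S/I^{\sat})_0.
\]
Here $\dim_k\Hom_S(I^{\sat}, S/I^{\sat})_0 = 8$ (the dimension of $\VPSgood_G$, equivalently of the locus of length-$4$ schemes on a line: $2$ for the line plus $4$ for four points plus $2$ for the residual freedom matching the count in Proposition~\ref{prop:SlipSmooth} with $n=4$), and $\Ext^1_S(I^{\sat}/J, S/I^{\sat})_0$ should again be $1$ by the local computation of~\cite[Example~4.1]{JM}. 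The new and crucial point is that $\Hom_S(J, I^{\sat}/J)_0$ — which is the tangent space to the fibre of $\pi_G$ over $[I_2]$ inside $\Syz_4$ — is now only the tangent space to $\Gr{1}{I_2^{\sat}/(I^{\sat}_1\cdot (I_L)_1)}$, of dimension $\binom{4}{2}-1-(2\cdot 4 - 5) = 6 - 1 - 3 = 2$, rather than the $\binom{4}{2}=6$ that appeared for $\Hilb^H$. This yields $\dim_k T_{[I_2]}\Syz_4 \leq 8 + 2 + 1 = 11$.

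Finally, I would argue that $\dim \Syz_4 \geq 11$ at $[I_2]$ because the image of $\Slip^H$ under $\pi_G$ has dimension $12 - 1 = 11$: the generic fibre of $\pi_G|_{\Slip^H}$ over its image is the $1$-dimensional $\Gr{1}{\cdots}$ (the cubic choice), and $\Slip^H$ is irreducible of dimension $12$. Combining with the tangent bound gives $\dim_k T_{[I_2]}\Syz_4 = 11 = \dim_{[I_2]}\pi_G(\Slip^H)$, so $[I_2]$ is a smooth point of $\pi_G(\Slip^H)$, which is what is asserted.

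\textbf{Main obstacle.} The delicate step is the bookkeeping that identifies $\Hom_S(J, I^{\sat}/J)_0$ scheme-theoretically with the tangent space to $\Gr{1}{I_2^{\sat}/(I^{\sat}_1\cdot (I_L)_1)}$: one must verify that the module $I^{\sat}/J$ is concentrated in the right degrees (it is a one-dimensional space in degree $2$, since $(I^{\sat})_2/(I_2)_2$ measures the extra quadric and the higher degrees are killed by the $S_{\geq 4}$ truncation on the $J$ side matched against the stabilisation of $I^{\sat}$), and that Proposition~\ref{prop:necessaryForLimitOfSat} applies verbatim at the level of generated ideals to force $I^{\sat}_1\cdot (I_L)_1\subseteq I_2$, cutting down the fibre exactly as in Proposition~\ref{prop:SlipSmooth}. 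A secondary point is confirming that the lower-bound dimension count for $\pi_G(\Slip^H)$ is correct — that $\pi_G$ does not drop dimension by more than the expected $1$ — which follows from the explicit description in~\cite[\S4.4]{JM} together with Corollary~\ref{linesinquadric}, but should be stated carefully.
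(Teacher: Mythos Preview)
Your approach has a fundamental dimension error that propagates through the whole argument. You assert that the generic fibre of $\pi_G|_{\Slip^H}$ is the one-dimensional Grassmannian of cubic choices, and hence that $\dim \pi_G(\Slip^H) = 12 - 1 = 11$. This is wrong: a \emph{general} point of $\Slip^H$ is a saturated ideal of four points in linearly general position, and such an ideal is generated by its quadrics, so $I_2$ determines $I$ and $\pi_G$ is generically injective. Thus $\dim \pi_G(\Slip^H) = 12$, not $11$. The ``cubic choice'' fibre only appears over the unsaturated boundary; it does not govern the generic fibre. Relatedly, your claim that $I_2 = (I^{\sat})_2$ is false: $(I^{\sat})_2$ has dimension $\binom{5}{2}-3 = 7$ while $I_2$ has dimension $6$, so $I_2$ is a genuine hyperplane in $(I^{\sat})_2$ (cut out by $q$). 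Consequently $[I_2]$ does \emph{not} simply record the quadrics through $\Gamma$, and your identification of $I^{\sat}/J$ and of $\Hom_S(J, I^{\sat}/J)_0$ collapses.

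Since $\pi_G(\Slip^H)$ is $12$-dimensional through $[I_2]$, any valid tangent bound at $[I_2]$ must be at least $12$; your bound of $11$ on $T_{[I_2]}\Syzfour$ is therefore impossible. In fact $\Syzfour$ is \emph{not} smooth at $[I_2]$: it has two components through that point, so the strategy of bounding the tangent space to $\Syzfour$ and matching it with one dimension cannot work directly --- one has to separate the components and show that $\pi_G(\Slip^H)$ coincides with one of them. The paper does this not by a tangent-space estimate but computationally: it uses a torus action preserving $q$ to degenerate every unsaturated limit ideal to a single one (supported at a point of a line in $Q^{-1}$), and then invokes the \texttt{VersalDeformations} package to verify that locally $\Syzfour$ is the reduced union of two smooth components of dimensions $12$ and $10$; the $12$-dimensional one is then identified with $\pi_G(\Slip^H)$ by irreducibility and dimension.
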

\begin{proof}
    Every unsaturated limit ideal $I$ defines a scheme $\Gamma=V(I)$ of length four on a line on a quadric by
    Proposition~\ref{prop:necessaryForLimitOfSat} and
    Corollary~\ref{linesinquadric}. The ideal $I$ itself is determined by $\Gamma$
    as $I=I_\Gamma\cap q^\perp$.
    Using a torus action not changing the quadric, we may degenerate
    further so that $\Gamma$ is supported only at a single point.
    There is then, up to coordinate change, only one unsaturated limit ideal. If $q = y_1y_3 +
    y_2y_4$, then we may take it as $I := (x_2^4, x_3, x_4)\cap q^\perp$.

    Using the package \emph{VersalDeformations}~\cite{Ilten}, see~Appendix~\ref{sec:computerPart},
    we check that near the point $[I_2]\in \Syz$ the scheme $\Syz$ is reduced
    and it has two irreducible components $\mathcal{Z}_1$, $\mathcal{Z}_2$ passing through $[I_2]$; the
    dimensions are {$12$ and $10$}, respectively. Moreover, both these components
    are smooth at $[I_2]$. The image of {$\Slip^H$} under $\pi_G\colon \Hilb^H \to
    \Gr{6}{S_2}$ is an integral {$12$}-dimensional variety, hence it
    has to coincide with the larger component: $\pi_G({\Slip^H}) =
    \mathcal{Z}_1$. In particular, {$[I_2]$ is its smooth point}.
\end{proof}

\begin{proposition}\label{prop:GrassSlipSmoothFive}
    Let $n=5$ and let $[I]\in \VPSsbl \cap \VPSuns$ be an unsaturated limit ideal. Then its image $[I_2]=\pi_G([I])\in \VPSsbl_G$ is a smooth
    point of the image of $\Slip^H$ in $\Gr{10}{S_2}$.
\end{proposition}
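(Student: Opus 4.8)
The plan is to mirror the strategy of Proposition~\ref{prop:GrassSlipSmooth}, but now for $n=5$, where by Proposition~\ref{prop:regularity} there are two possible Hilbert functions for $S/I^{\sat}$, namely $(1,3,4,5,5,\ldots)$ and $(1,2,3,4,5,5,\ldots)$, corresponding to the families $\mathcal{F}_1$ and $\mathcal{F}_2$ discussed in Section~\ref{section:unsat}. First I would reduce to finitely many (up to coordinate change, actually just one or two) normal forms: by Corollary~\ref{linesinquadric} the scheme $\Gamma=V(I)$ contains a length $4$ subscheme on a line $L\subseteq Q^{-1}$, and $I$ is recovered from $\Gamma$ by intersecting with $q^\perp$ (and, in the $\mathcal{F}_2$ case, also with a cube $c^\perp$ of the special form $\ell^2\mu$ described at the end of Section~\ref{section:unsat}). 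Using a torus action fixing $q$ one degenerates $\Gamma$ so that it is supported at a single point of $L$; this leaves, up to $\GL_5$, one ideal in each family. Concretely, for $q = y_1y_4+y_2y_3+y_5^2$ one can take the $\mathcal{F}_1$-representative to be $(x_2^4, x_4, x_5, x_1)\cap q^\perp$ (four infinitely near points on the line $\{x_4=x_5=x_1=0\}$ together with the point $\{x_1=x_2=x_3=0\}$ degenerated onto the line, suitably arranged so $L\subset Q^{-1}$ and the extra point lies in the polar plane) and the $\mathcal{F}_2$-representative to be $I_\Gamma\cap \ell^2\mu{}^\perp\cap q^\perp$ for $\Gamma$ a length-$5$ scheme on a line in $Q^{-1}$.

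Second, exactly as in Proposition~\ref{prop:GrassSlipSmooth}, I would invoke the \emph{VersalDeformations} package \cite{Ilten}, see Appendix~\ref{sec:computerPart}, to compute the local structure of the syzygetic locus $\Syz \subseteq \Gr{10}{S_2}$ at the point $[I_2]$, using the tangent space description $\Hom_S((V)+S_{\geq 4}, S/((V)+S_{\geq 4}))_0$ recalled just before the proposition. The computation should show that near $[I_2]$ the scheme $\Syz$ is reduced with exactly two irreducible components $\mathcal{Z}_1, \mathcal{Z}_2$ through $[I_2]$, one of which is smooth of dimension $\dim \VPSsbl_G = \binom{5}{2}=10$ (and is therefore forced to equal $\pi_G(\VPSsbl)$ by integrality and a dimension count) and the other of which accounts for the extra degenerations. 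I would then conclude that $[I_2]$ is a smooth point of the image of $\Slip^H$ in $\Gr{10}{S_2}$, since $\pi_G(\Slip^H)$ is the larger (indeed equal-dimensional) component $\mathcal{Z}_1$.

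There is one wrinkle relative to $n=4$: for $n=5$ there are two distinct normal forms to handle (one per family $\mathcal{F}_i$), so the computer verification must be run twice, and one must check that in each case the smooth $10$-dimensional component through $[I_2]$ is the image of $\Slip^H$ and not a spurious component. To settle which component is $\pi_G(\VPSsbl)$, I would use that $\VPSsbl$ is irreducible (Proposition~\ref{connectednessOfVPSgood} gives connectedness; for $n\le 13$ and in particular $n=5$ the good locus is irreducible by Theorem~\ref{ref:reducibility:thm} and the boundary is a divisor) and that $\dim \VPSsbl_G = \dim \VPSsbl = 10$ via $\pi_G$ being generically finite on the good locus (Remark~\ref{GoodIsomorphism}); an integral $10$-dimensional variety sitting inside a reduced local model with components of dimensions $10$ and $< 10$ (or $10$ and $10$ but with the other one not meeting $\VPSsbl_G$) must be the $10$-dimensional component, which is smooth.

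The main obstacle I expect is the computer-algebra step: the ambient Grassmannian $\Gr{10}{S_2}$ has $S_2$ of dimension $15$, so one is working with $10$-planes in a $15$-space and computing $\Ext^1$-type obstruction spaces and a versal deformation of a point on the determinantal locus $\Syz$; carrying out the versal-deformation calculation to high enough order to certify reducedness and to identify the two components (and their dimensions and smoothness) is the delicate and computationally heavy part. The reduction to normal forms and the formal deduction of smoothness of $[I_2]$ from the local model are routine given Section~\ref{section:unsat} and the $n=4$ template; the weight of the proof is entirely in the explicit computation, which is relegated to the appendix.
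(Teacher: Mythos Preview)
Your overall plan follows the paper's template, but two concrete gaps would make the argument fail as written.

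First, you conflate $\pi_G(\VPSsbl)$ with $\pi_G(\Slip^H)$. The statement concerns smoothness of the latter, which has dimension $n(n-1)=20$, not $\binom{5}{2}=10$; your proposed representative $(x_2^4,x_4,x_5,x_1)\cap q^\perp$ in fact has length $4$, not $5$. More seriously, for $n=5$ the computation in Appendix~\ref{sec:computerPart} shows that \emph{both} local components $\mathcal{Z}_1,\mathcal{Z}_2$ of $\Syz$ through $[I_2]$ have dimension $20$, so your dimension count cannot single out $\pi_G(\Slip^H)$ the way it does for $n=4$. The paper sidesteps this entirely: since the versal computation certifies that both components are smooth of dimension $20=\dim\pi_G(\Slip^H)$, the irreducible image $\pi_G(\Slip^H)$ must locally coincide with one of them and is therefore smooth at $[I_2]$; no identification is needed.

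Second, the reduction to finitely many normal forms is substantially more delicate than ``a torus action fixing $q$''. The paper exploits that the singular locus of $\pi_G(\Slip^H)$ is closed and $\GL_5$-invariant (not merely $O(q)$-invariant), and uses torus degenerations that do \emph{not} preserve $q$, e.g.\ $t\cdot[y_1{:}\cdots{:}y_5]=[ty_1{:}ty_2{:}ty_3{:}ty_4{:}t^2y_5]$, interleaved with explicit $O(q)$-coordinate changes chosen to move the fifth point (respectively the linear form $\ell$) off a bad locus before taking the limit. In the $\mathcal{F}_2$ case the ideal $I$ is not determined by $\Gamma$ alone, so the reduction must simultaneously normalize the cube $c=\ell^2\mu$; the paper does this via the description of $I_3^\perp$ from \cite[\S4.4.3]{JM}, arriving at $I=I_\Gamma\cap(y_5y_3^2)^\perp\cap q^\perp$ with $\Gamma$ supported at a single point. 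Without these steps one does not arrive at the two explicit ideals on which the computer check is actually run.
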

\begin{proof}
    The argument is analogous to Proposition~\ref{prop:GrassSlipSmooth},
    although reducing to finitely many possible limits requires much more work
    (we need finitely many of them to compute the tangent space dimensions).
    By Corollary~\ref{linesinquadric} and
    Proposition~\ref{prop:regularity},
    an unsaturated limit ideal $I$ corresponds to a scheme
    $\Gamma=V(I^{\sat})$ of length five with a subscheme $\Gamma_0\subset
    \Gamma$ of length four on a line $L\subseteq Q^{-1}$ and a possibly
    embedded fifth point. Up to coordinate change, we may
    assume that $q = y_1y_3 + y_2y_4 + y_5^2$ and the line $L$ is $x_1 = x_4 = x_5
    = 0$. We have $q^{-1}=4x_1x_3 + 4x_2x_4 + x_5^2$.
    Consider two cases for $\Gamma = V(I^{\sat})$:
    \begin{enumerate}
        \item\label{it:curvilinear} the scheme $\Gamma$ is contained in $L$,
        \item\label{it:notcurvilinear} the scheme $\Gamma$ is not contained in $L$.
    \end{enumerate}
    We begin with the case~\ref{it:notcurvilinear}. By Corollary~\ref{kri} the fifth point
    lies on the plane $\Pi = (x_1 = x_4 = 0)$.
    Consider a coordinate change
    \begin{align*}
        x_5 &\mapsto x_5 + \lambda_4x_4 +
        \lambda_1 x_1,\\
        x_3&\mapsto x_3 - \frac{1}{4}\lambda_1^2x_1 - \frac{1}{4}\lambda_1\lambda_4 x_4
        - \frac{1}{2}\lambda_1x_5,\\
        x_2&\mapsto x_2 - \frac{1}{4}\lambda_4^2x_4-\frac{1}{4}\lambda_1\lambda_4
        x_1- \frac{1}{2}\lambda_4x_5.
    \end{align*}
    It preserves $Q^{-1}$ and $L$ and maps a point $[0:0:0:0:1]\in \Pi$ to
    $[\lambda_1:0:0:\lambda_4:1]$. Using such an action, we may assume that
    the fifth point is not $[0:0:0:0:1]$.
    Consider the torus action by
    \[
        t\circ[y_1:y_2:y_3:y_4:y_5] = [ty_1:ty_2:ty_3:ty_4:t^2y_5].
    \]
    For $t\to 0$ the fifth point converges to a point of $L$.
    The set of singular points of the image is closed, so we may reduce to
    the case of the fifth point on a line; either the point is embedded or we
    reduce to case~\ref{it:curvilinear}. We consider the
    case~\ref{it:curvilinear} below, so assume that the point is embedded.

    For every $\lambda\in \CC$ the
    coordinate change $x_2\mapsto x_2 + \lambda x_3$, $x_1 \mapsto x_1 - \lambda
    x_4$ maps $Q^{-1}$, $L$ and $\Pi$ to themselves.
    Using such an action for general $\lambda$, we may assume that $\Gamma$ is
    not supported at $[0:1:0:0:0]$.
    Consider the torus action by
    \[
        t\circ[y_1:y_2:y_3:y_4:y_5] = [y_1:ty_2:y_3:t^{-1}y_4:y_5].
    \]
    Again it preserves $Q^{-1}$, $L$ and $\Pi$. The limit with $t\to 0$ of any
    point $$[0:y_2:y_3:0:0]\in L \setminus \{[0:1:0:0:0]\}\;{\rm is}\;[0:0:1:0:0].$$ Hence, we reduce
    to the case of $\Gamma$ supported at $[0:0:1:0:0]$ having an embedded
    point, so that $\Gamma_0=(x_1=x_2^4=x_4=x_5=0)$ and $\Gamma$ spans $\Pi$. Here $I$ is determined by $I =
    I(\Gamma) \cap q^{\perp}$. We check its smoothness
    in~\S\ref{sec:computerPart}. (This is the case from
    Example~\ref{ex:nonGorenstein}.)
    
    The case~\ref{it:curvilinear} is similar, but slightly more algebraic,
    as in this case the coordinate changes need to take into account the
    structure of $I$ rather than $I^{\sat}$. By~\cite[Proposition~4.4(3)]{JM},
    the space $I^{\perp}_3$ is spanned by $y_2^3, y_2^2y_3, y_2y_3^2, y_3^2$
    and $y_5\cdot \ell^2$, where $\ell\in \CC[y_2, y_3]_1$.
    Making the coordinate change as above, we may assume that $\Gamma =
    V(I^{\sat})$ is not supported at $[0:1:0:0:0]$ and that $\ell$ is not
    proportional to $y_2$. Then a torus degeneration as above reduces us to
    the case $y_5y_3^2$ and $\Gamma$ supported at $[0:0:1:0:0]$, thus $I$ is
    determined as $I = I(\Gamma) \cap (y_5y_3^2, q)^{\perp}$. Again, we check its smoothness
    in~\S\ref{sec:computerPart}.
\end{proof}

\begin{remark}
    Actually, according to the calculation in~\S\ref{sec:computerPart}, near
    $[I_2]$ the variety $\pi_G(\Hilb^H)$ is equal to $\Syz$ for $n=4,5$.
\end{remark}

\begin{proposition}\label{G-smooth}
    Let $\pi_G\colon \Hilb^H\to \Syz$ be the natural map.
    The map $$\GL_n\times \VPSsbl_G\to \pi_G(\Slip^H);\quad (g,I_2)\mapsto g\cdot I_2$$ is smooth, so in
    particular $\VPSsbl_G$ is smooth for $n=4, 5$.
\end{proposition}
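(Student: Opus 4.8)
The plan is to follow the strategy of Corollary~\ref{ref:VSPborderSmooth:cor}, transported to the Grassmannian side: first prove that the action map $\GL_n\times\VPSsbl_G\to\pi_G(\Slip^H)$ is smooth, and then deduce smoothness of $\VPSsbl_G$ from smoothness of its target along the relevant locus. The crucial observation is that Theorem~\ref{thm:orthogonalization}, and hence Proposition~\ref{ref:smoothness}, only involve the degree-$2$ part of the ideals in question, and that their sole hypothesis beyond degree-$2$ data, namely that $(S_A/I)_2$ be a free $A$-module, is automatic for the pullback of the tautological quotient bundle on a Grassmannian (a finitely generated projective module over a local ring is free). Therefore the proof of Proposition~\ref{ref:smoothness} applies verbatim with $\Hilb^H$ replaced by $\Syz$: for every $\GL_n$-stable locally closed subscheme $\mathcal{W}\subseteq\Syz$, the action map $\GL_n\times\{[V]\in\mathcal{W}\mid V\subseteq q^\perp_2\}\to\mathcal{W}$ is smooth with $\binom{n+1}{2}$-dimensional fibers.

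I would apply this with $\mathcal{W}=\overline{\pi_G(\Slip^H)}$, which is $\GL_n$-stable as the closure of a $\GL_n$-stable set. Because $q$ has degree $2$ we have $q^\perp_d=S_d$ for all $d\geq 3$, so for $I\in\Hilb^H$ the condition $I\subseteq q^\perp$ is equivalent to $I_2\subseteq q^\perp_2$; hence $\{[V]\in\pi_G(\Slip^H)\mid V\subseteq q^\perp_2\}=\pi_G(\VPS\cap\Slip^H)=\VPSsbl_G$, the middle step using Corollary~\ref{ref:saturableAndApolarImpliesSaturableInApolar:cor}. Thus the smoothness just obtained is exactly the smoothness of $\GL_n\times\VPSsbl_G\to\pi_G(\Slip^H)$ asserted in the proposition.

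To get the final clause I would argue as follows. Since $\GL_n$ is smooth, $\VPSsbl_G$ is smooth if and only if $\GL_n\times\VPSsbl_G$ is, and left translation on the $\GL_n$ factor reduces the latter to smoothness at points $(e,[V])$; because the action map is smooth, it suffices to show $\pi_G(\Slip^H)$ is smooth at each $[V]\in\VPSsbl_G$. Set-theoretically $\VPSsbl=\VPSgood\cup(\VPSsbl\cap\VPSuns)$, and for $n\leq 5$ the second piece consists of unsaturated limit ideals by Proposition~\ref{prop:regularity}, so $[V]$ is either $\pi_G([I_\Gamma])$ with $[I_\Gamma]\in\VPSgood$ or $\pi_G([I])$ with $[I]$ an unsaturated limit ideal. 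In the first case $\Gamma=V(I_\Gamma)\subseteq\PP^{n-1}$ is linearly normal of degree $n$, so $H_{S/I_\Gamma}$ already reaches $n$ in degree $1$; hence $I_\Gamma$ is $2$-regular and, since $\Gamma$ spans, generated in degree $2$. Consequently the differential of $\pi_G$ at $[I_\Gamma]$, which is restriction of $S$-homomorphisms $I_\Gamma\to S/I_\Gamma$ to $(I_\Gamma)_2$, is injective, so $\pi_G$ is a locally closed immersion near $[I_\Gamma]$; as $\Slip^H$ is smooth at $[I_\Gamma]$ by Theorem~\ref{ref:introSmoothMapWhole:thm} and $\pi_G(\Slip^H)$ is irreducible of the same dimension, $\pi_G(\Slip^H)$ is smooth at $[V]$. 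In the second case the required smoothness of $\pi_G(\Slip^H)$ at $[V]$ is precisely Proposition~\ref{prop:GrassSlipSmooth} for $n=4$ and Proposition~\ref{prop:GrassSlipSmoothFive} for $n=5$.

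I expect the genuine obstacle to be this last case. For an unsaturated limit ideal $\pi_G$ is no longer an immersion — the defining ideal acquires generators in degrees $3$ and $4$, compare Example~\ref{ex:nonsaturatedlimit,n=4} — so one cannot transport smoothness of $\Slip^H$ formally and must instead reduce, via torus actions preserving $Q$, to finitely many limit ideals up to coordinate change and then compute the tangent space to $\Syz$ at those points. That reduction (considerably more involved for $n=5$) together with the machine-assisted verification is exactly what Propositions~\ref{prop:GrassSlipSmooth} and~\ref{prop:GrassSlipSmoothFive} provide. The remaining ingredients — the orthogonalization transfer of Proposition~\ref{ref:smoothness}, the identification $\{V\subseteq q^\perp_2\}=\VPSsbl_G$, and the left-translation reduction — are routine.
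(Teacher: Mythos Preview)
Your proposal is correct and takes essentially the same approach as the paper: both invoke Theorem~\ref{thm:orthogonalization} (which uses only degree-$2$ data) to obtain smoothness of the action map $\GL_n\times\VPSsbl_G\to\pi_G(\Slip^H)$, and then establish smoothness of the target by splitting into the good locus (where $\pi_G$ is an immersion because the ideals are $2$-regular) and the unsaturated-limit locus handled by Propositions~\ref{prop:GrassSlipSmooth} and~\ref{prop:GrassSlipSmoothFive}. Your write-up supplies more detail than the paper's terse proof---in particular the explicit identification $\{[V]\in\pi_G(\Slip^H)\mid V\subseteq q^\perp_2\}=\VPSsbl_G$ and the $2$-regularity argument on $\VPSgood$---but the structure and the key inputs are identical.
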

\begin{proof}
    We have $\pi_G(\VPSsbl) = \VPSsbl_G$ and the map $\pi_G$ sends $I$ to $I_2$,
    so the smoothness of the map follows by
    Theorem~\ref{thm:orthogonalization}.
    {For $n\leq 5$ the locus $\VPSgood$ is smooth so $\pi_G(\VPSgood)$ is smooth by Proposition~\ref{prop:goodlocusInGrassmannian}.}
    {Moreover, the scheme} $\VPSsbl$ is smooth at the unsaturated limit ideals by
    Propositions~\ref{prop:GrassSlipSmooth}-\ref{prop:GrassSlipSmoothFive} {and by the smoothness of the map $\GL_n\times \VPSsbl_G\to \pi_G(\Slip^H)$. Hence $\VPSsbl_G$ is smooth.}
\end{proof}

When $n=1$ and $n=2$ the variety $\VPSsbl_G$ is $\PP^1$, respectively a $3$-fold
general linear section of $\Gr{2}{5}$, cf.~\cite{Mukai}. 
   In particular, in both cases, the Grassmannian subscheme $\VPSsbl_G$ is a Fano variety of index $2$.   The same holds for $n=4,5$.

\begin{corollary}\label{ref:Fano:cor}
   $\VPSsbl_G$ is a Fano variety of index $2$, when $n=4,5$.
\end{corollary}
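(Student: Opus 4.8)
The plan is to reduce the statement to an identification of the anticanonical bundle of the smooth projective variety $X := \VPSsbl_G$ with twice the restriction of the Pl\"ucker bundle under $\pi_G$. For $n \le 3$ the statement is classical (see Proposition~\ref{prop:regularity} and~\cite{Mukai}), so only $n = 4, 5$ require argument; throughout, $X$ is smooth, projective and irreducible of dimension $\binom{n}{2}$ by Proposition~\ref{G-smooth}.

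First I would control the boundary. By Remark~\ref{GoodIsomorphism} the open locus $\VPSgood_G \subseteq X$ is isomorphic to $\VPSgood$, and $X \setminus \VPSgood_G$ is the $\pi_G$-image of the locus of unsaturated limit ideals — a point of $\VPSgood_G$ cannot also be such an image, since for an unsaturated limit $I$ every quadric of $I_2$ vanishes on the line $L \subseteq V(I)$, whereas for a linearly normal finite $\Gamma$ the quadrics through $\Gamma$ have finite base locus. Using Proposition~\ref{prop:regularity} and the descriptions in Section~\ref{section:unsat}, I would check that this boundary has codimension at least $2$ in $X$: for $n = 4$ the image $I_2 = (I_L)_2 \cap q^\perp_2$ depends only on the line $L \subseteq Q^{-1}$ (Corollary~\ref{linesinquadric}), so the boundary is a curve; for $n = 5$ one goes through the families $\mathcal F_1, \mathcal F_2$ and observes that their $\pi_G$-images depend on fewer than $8$ parameters. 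Since $X$ is smooth and normal, this gives $\Pic X \cong \Pic \VPSgood_G$, so $K_X$ is recovered from $K_{\VPSgood}$.

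The core step is then $-K_{\VPSgood} \cong \OO(2)$, where $\OO(1)$ is the $\pi_G$-pullback of the Pl\"ucker bundle. Here the point is that this is a local, embedding-independent computation and so is unaffected by the corrections of the present paper: over the universal family $\mathcal U \to \VPSgood$ one has $\pi_{*}\OO_{\mathcal U}(1) \cong S_1 \otimes \OO_{\VPSgood}$ by linear normality (Proposition~\ref{ref:charOfPossiblyApolar:prop}), the local Gorenstein structure equips $\mathcal A := \pi_{*}\OO_{\mathcal U}$ with a symmetric $\OO$-bilinear pairing into a line bundle, and a Chern-class bookkeeping of the tangent sheaf $T_{\VPSgood} \hookrightarrow \Hom(I_2, q^\perp_2/I_2)$ against these data yields $\det T_{\VPSgood} \cong \OO(2)$; this is exactly the computation of~\cite{ranestad_schreyer_VSP}, and alternatively it can be done by comparing canonical bundles along the smooth map $\GL_n \times \VPSgood \to \HgoodGor$ of Theorem~\ref{ref:introSmoothMap:thm}. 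Combined with the previous paragraph, $-K_X \cong \OO_X(2)$ on all of $X$. As $\OO_X(1)$ is very ample, $-K_X$ is ample and $X$ is Fano; and the index is exactly $2$ because $\OO_X(1)$ is primitive in $\Pic X$ — for $n = 4$ this is Proposition~\ref{ref:linearSection:prop}, and for $n = 5$ it follows from the existence of a line on $X$, for instance a line on the quadric $TQ^{-1} \cong Q^{-1} \subseteq X$ from Proposition~\ref{ref:linearSection:prop}, along which $\OO_X(1)$ has degree $1$.

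The main obstacle is the canonical-bundle identification: since $X$ is not visibly a homogeneous space or a linear section, $-K_X$ cannot simply be read off, and the Chern-class calculation on $\VPSgood$ must be done carefully — in particular one must keep track of the difference between $\OO(1)$ and its twists by determinants of the auxiliary bundles — after which it still has to be checked that the resulting equality of line bundles propagates across the codimension $\ge 2$ boundary. A secondary, purely technical point (only for $n = 5$) is to make the dimension count on the $\pi_G$-images of $\mathcal F_1$ and $\mathcal F_2$ rigorous enough to conclude codimension $\ge 2$, and to verify that the Gauss-map embedding of $TQ^{-1}$ carries lines of $Q^{-1}$ to lines of $X$.
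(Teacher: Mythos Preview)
Your approach differs substantially from the paper's, and the obstacle you flag at the end is real and is precisely what the paper's argument sidesteps.

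The paper does not attempt a direct computation of $\det T_{\VPSgood}$. Instead it uses the special Pl\"ucker hyperplanes
\[
    H_h = \{[I_2]\in \VPSsbl_G \ |\ V(I_2)\cap h \neq \emptyset\},
\]
shows (quoting \cite[Proposition~5.11]{RS}) that the complement of $H_h$ in $\VPSsbl_G$ is an affine space, and then argues that $H_h$ is irreducible using lines connecting points in $\VPSgood_G$ (\cite[Lemma~6.2]{RS}); this gives $\Pic(\VPSsbl_G)\cong\ZZ$ with $H_h$ as generator. The index is then read off either from the original computation in \cite[Theorem~6.1]{RS} or, for $n=4$, from adjunction along the boundary curves $C_i$ (Remark~\ref{index2}): $\deg K_{C_i} = -2 = \deg N_{C_i/X} - a\,(H\cdot C_i) = 10 - 6a$ forces $a=2$. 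No Chern-class bookkeeping on the universal family is needed.

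Your route --- extend $K_{\VPSgood}$ across a codimension~$\ge 2$ boundary after identifying it with $\OO(-2)$ --- is in principle sound, but the identification itself is the entire content and you do not carry it out. The citation to \cite{ranestad_schreyer_VSP} is misplaced: the relevant canonical-bundle/index computation for quadrics is in \cite{RS}, not in the 2000 paper, and even there it is done via the $H_h$ structure rather than via the tangent-sheaf Chern classes you describe. Your alternative suggestion, comparing canonical bundles along the smooth map $\GL_n\times\VPSgood\to\HgoodGor$, would at best relate $K_{\VPSgood}$ to $K_{\HgoodGor}$ and to the orbit map of $\GL_n$ on quadrics, which still leaves a nontrivial identification to be made. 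Finally, your primitivity argument for $n=5$ (a line on $TQ^{-1}$ mapping to a line on $X$) is not justified: the Gauss map need not be linear in Pl\"ucker coordinates, so lines on $Q^{-1}$ need not become lines in the Grassmannian, and the paper does not claim this.

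In short: your codimension-$2$ reduction is correct and useful, but from there the paper's affine-complement/$H_h$ argument is both cleaner and complete, whereas your Chern-class step remains a genuine gap.
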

\begin{proof}
We follow the argument of \cite[Theorem 6.1]{RS}.
Consider the set
$$H_h=\{[I_2]\in \VPSsbl_G\ |\ V(I_2)\cap h\not= \emptyset\}$$
where $h\subset \PP(T_1)$ is a hyperplane.  
In \cite[Lemma 4.9]{RS} the set $H_h$ is defined for schemes $\Gamma\in
{\rm VPS}(Q,n)$, but the definition extends naturally.  Similarly, the proof of the lemma extends to show that $H_h$ is the restriction of a Pl\"ucker divisor on $\Gr{\binom{n}{2}}{S_2}$.
Notice that the divisors $H_h$ contains the image of all unsaturated limit
ideals, when $n=4,5$, since $V(I_2)$ in these cases contains a
line that necessarily intersects the hyperplane $h$.  By \cite[Proposition
5.11]{RS}, the complement of $H_h$ is isomorphic to an affine space, while $H_h$
is very ample by Proposition \ref{G-smooth}. The image $BL\subset \VPSsbl_G$ of
the set of all unsaturated limit ideals is not a divisor, so the argument of
the proof of \cite[Theorem 6.1]{RS} applies: Since the complement of $H_h$ is
isomorphic to an affine space, the Picard group of $\VPSsbl_G$ is $\ZZ$ as soon as the restriction $H_h$ of the special Pl\"ucker divisor is irreducible. So assume $H_h=H_1+H_2$. 

Any apolar scheme $\Gamma$ lies in the complement of some $h$, so the subset $BL$ is the base locus of the special divisors $H_h$.   But $BL$ is not a divisor, so  $H_1$ and $H_2$ must both move with base locus contained in $BL$. 

Now $H_h\cdot l=1$ for every line $l\subset \VPSsbl_G\setminus BL$, so only one of the two components can have positive intersection with $l$. In particular, one of the components, say $H_2$ has intersection $H_2\cdot l=0$ and contains every line $l$ that it intersects. 
By, \cite[Lemma 6.2]{RS}, any two smooth apolar schemes $\Gamma, \Gamma'\in
\VPSsbl_G\setminus BL$ are connected by a sequence of lines, so $H_2$ would
contain all of $\VPSsbl_G$.  Therefore $H_h$ is irreducible.

The computation of the Fano-index in \cite[Theorem 6.1]{RS} when $n=4,5$,
applies similarly to $\VPSsbl_G$, see also Remark~\ref{index2}.

\end{proof}

\subsection{The scheme $\VPSsbl_G$ for $n=4$}
In the case $n=4$, we give a more precise description of the Grassmannian compactification, starting with the image $BL\subset \VPSsbl_G$
 of the family of unsaturated limit ideals in $\VPSsbl$ by the map
$\pi:\VPSsbl\to \Syz$. By Corollaries \ref{linesinquadric} and
\ref{unsaturatedlimits4} each unsaturated limit ideal is contained in the
ideal of a line in $Q^{-1}$, and the degree two part of such an ideal depends
only on the line: it is the space of those quadrics in the ideal of the line that lie in $q^\perp$. Therefore $$BL=\VPSsbl_G\cap\VPSuns_G=C_1\cup C_2$$ is the disjoint union of two rational curves $C_1$ and $C_2$, one for each pencil of lines in $Q^{-1}$.

\begin{lemma}\label{degree6lem} Let $n=4$. The two curves $C_1$ and $C_2$ are rational normal curves of degree $6$ in 
$\VPSsbl_G\subset \Gr{6}{q^\perp_2}$.
\end{lemma}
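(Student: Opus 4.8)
The plan is to exhibit $C_i$ explicitly as the image of a morphism $\psi_i\colon\PP^1\to\VPSsbl_G$ and to identify $\psi_i$ with the degree-$6$ Veronese embedding of $\PP^1$ followed by a linear embedding. Recall from the discussion preceding the lemma that, for a line $L$ in the $i$-th ruling of $Q^{-1}$, all the unsaturated limit ideals supported on $L$ have the same $\pi_G$-image $(I_L)_2\cap q^\perp_2\in\Gr{6}{q^\perp_2}$, so $C_i$ is precisely the image of the morphism $L\mapsto(I_L)_2\cap q^\perp_2$ as $L$ runs over the ruling. Moreover $O(q)\subseteq\GL(S_1)$ acts on the whole situation — on $S$, on $q^\perp$, on the Pl\"ucker-polarized Grassmannian $\Gr{6}{q^\perp_2}$, on $\VPSsbl_G$ and (dually on $\PP(T_1)$) on $Q^{-1}\cong\PP^1\times\PP^1$ — and contains an element inducing the swap of the two factors of $Q^{-1}$, hence of the two rulings; so it suffices to treat a single ruling.

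First I would fix coordinates with $q=y_1y_4+y_2y_3$, so that $q^{-1}=x_1x_4+x_2x_3$ up to scalar and $Q^{-1}=\{v_1v_4+v_2v_3=0\}\subset\PP(T_1)$, and parametrize one ruling by $[s:t]\in\PP^1$, arranging that the line $L_{[s:t]}$ has $(I_{L_{[s:t]}})_1=\langle tx_1-sx_3,\ tx_2+sx_4\rangle$. With respect to the basis $x_1^2,x_2^2,x_3^2,x_4^2,x_1x_2,x_1x_3,x_2x_4,x_3x_4,x_1x_4-x_2x_3$ of $q^\perp_2$, the subspace $(I_{L_{[s:t]}})_2\cap q^\perp_2$ is the kernel of a $3\times 9$ matrix $M([s:t])$, namely the matrix of the composite $q^\perp_2\into S_2\onto S_2/(I_{L_{[s:t]}})_2\cong\kk[z_0,z_1]_2$. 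The one computation to carry out is $\Sym^2$ of the substitution $x_1\mapsto sz_0,\ x_2\mapsto sz_1,\ x_3\mapsto tz_0,\ x_4\mapsto -tz_1$; its point is that the three rows of $M([s:t])$ are supported on a partition of the nine basis vectors into three triples, and within each triple the three nonzero entries are, up to nonzero scalars, $s^2$, $st$, $t^2$.

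Given this block structure, the conclusion follows at once. A $3\times 3$ minor of $M([s:t])$ vanishes unless its three columns meet each triple in exactly one index, and in that case it equals, up to a nonzero scalar, a product of one element of $\{s^2,st,t^2\}$ per triple. Each of the $27$ such products is a monomial in $\kk[s,t]_6$, and among them all of $s^6,s^5t,s^4t^2,s^3t^3,s^2t^4,st^5,t^6$ occur; hence they span the $7$-dimensional space $\kk[s,t]_6$. Since the Pl\"ucker coordinates of the $6$-plane $\ker M([s:t])$ are, up to sign and re-indexing, exactly the $3\times 3$ minors of $M([s:t])$, the morphism $\psi_i$ is the degree-$6$ Veronese embedding $\PP^1\into\PP^6=\PP(\kk[s,t]_6^{*})$ followed by a linear embedding into the Pl\"ucker space of $\Gr{6}{q^\perp_2}$. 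Therefore $C_i$ is a rational normal curve of degree $6$, spanning a $\PP^6$; the other ruling gives an isomorphic curve by the symmetry noted above.

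The only real work is bookkeeping — pinning down a parametrization of a ruling of $Q^{-1}$ and the corresponding linear forms $(I_{L_{[s:t]}})_1$, then confirming the block shape of $M([s:t])$; there is nothing conceptually difficult. As a coordinate-free check on the degree, $\psi_i$ factors as $\PP^1\xrightarrow{\ 2\ }\Gr{2}{T_1}\xrightarrow{\ 3\ }\Gr{3}{T_2}=\Gr{7}{S_2}\xrightarrow{\ \mathrm{lin}\ }\Gr{6}{q^\perp_2}$, where the first map sends $[s:t]$ to the plane of $T_1$ spanned by $L_{[s:t]}$ (a conic), the second is $V\mapsto\Sym^2 V$ (given by cubic forms in Pl\"ucker coordinates), and the third is intersection with the hyperplane $q^\perp_2\subset S_2$ (linear in Pl\"ucker coordinates), so the degree is $2\cdot 3\cdot 1=6$.
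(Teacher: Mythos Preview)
Your proof is correct and takes a genuinely different route from the paper's. The paper argues on the dual side: it passes to $\Gr{3}{(q^\perp_2)^*}$, identifies $\PP(I_L^\perp)$ with the linear span of the conic $v_2(L)$ inside the $2$-uple Veronese $v_2(Q^{-1})$, and then shows that the $\PP^2$-scroll swept out by these spans as $L$ moves in one ruling is a rational normal scroll of degree $6$ (using linear normality of $v_2(Q^{-1})$ for the upper bound and a three-line span argument for the lower); the degree of the curve in the Grassmannian then equals the degree of the scroll. Your approach is a direct coordinate computation: you write down the $3\times9$ presentation matrix $M([s:t])$ of $\ker\bigl(q^\perp_2\to (S/I_L)_2\bigr)$, observe its block-diagonal shape with each block carrying $s^2,st,t^2$ up to scalar, and read off that the $3\times3$ minors span $\kk[s,t]_6$, so the Pl\"ucker map is the $6$-uple Veronese followed by a linear embedding. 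This is more elementary and entirely self-contained, at the cost of being specific to $n=4$; the paper's scroll picture is more conceptual and makes the linear normality (hence the ``spans a $\PP^6$'') transparent without computing minors. Your coordinate-free tailpiece factoring through $V\mapsto\Sym^2V$ is essentially the dual of the paper's ``span of $v_2(L)$'' step, so the two arguments match up nicely; as you note, that tailpiece alone gives only the degree, and you correctly rely on the explicit minor computation to get the full span.
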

\begin{proof} For each line $L\subset Q^{-1}$, we let $I_L$ be the intersection of its ideal with the space of quadrics in the apolar ideal $q^\perp$.  
Thus $I_L$ is $6$-dimensional in $q^\perp$, and its orthogonal is a $3$-dimensional subspace $I_L^\perp\subset (q^\perp_2)^*$.
We proceed by arguing in $\Gr{3}{(q^\perp_2)^*}$.  Projectively  the subspace $\PP(I_L^\perp)$ is the span of the conic  $v_2(L)\subset v_2(Q^{-1})$ under the veronese embedding $v_2: \PP(T_1)\to \PP(T_2)$. The rational $\PP^2$-scroll $X$ formed by these spans as $L$ moves in one pencil of lines in $Q^{-1}$ is a rational normal scroll: For each triple of lines in one pencil in $Q^{-1}$, the corresponding conics on $X$ is not contained in any hyperplane, so the degree of $X$ is at least $6$, the minimal degree of a $3$-fold in $\PP((q^\perp_2)^*)$.  On the other hand, $v_2(Q^{-1})$ is linearly normal, hence also $X$, so $X$ has degree $6$.  The Grassmannian embedding of a rational normal scroll of planes $X$ in the Grassmannian $\Gr{3}{(q^\perp_2)^*}$ of planes is a rational normal curve and has the same degree as $X$.  The corresponding embedding in $\Gr{6}{q^\perp_2}$ of the  pencil of spaces $\PP(I_L)$ dual to $\PP(I_L^\perp)$ is projectively equivalent and therefore also have degree $6$.
\end{proof}

\begin{lemma}\label{ref:normalBundle:lem}
    For each rational normal curve $C_i, i=1,2$ in $\VPSsbl_G$, the
    tangent bundle
    of $\VPSsbl_G$ restricted to $C_i$ is $\OO(2)^{\oplus 6}$ and the normal bundle is $\OO(2)^{\oplus 5}$.
\end{lemma}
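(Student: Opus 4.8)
The plan is to compute the normal bundle $N := N_{C_i/\VPSsbl_G}$, as this already yields the tangent bundle: since $C_i\cong\PP^1$, the normal bundle sequence $0\to TC_i\to T\VPSsbl_G|_{C_i}\to N\to 0$ has $TC_i=\OO(2)$, and if $N=\OO(2)^{\oplus 5}$ then $\Ext^1_{\PP^1}(\OO(2)^{\oplus 5},\OO(2))=H^1(\PP^1,\OO)^{\oplus 5}=0$, so the sequence splits and $T\VPSsbl_G|_{C_i}=\OO(2)^{\oplus 6}$. Conversely $T\VPSsbl_G|_{C_i}=\OO(2)^{\oplus 6}$ forces $N=\OO(2)^{\oplus 5}$, because a rank-$5$ quotient of $\OO(2)^{\oplus 6}$ has every summand of degree $\geq 2$ while the degrees sum to $12-2=10$. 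So it suffices to determine $N$.

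First I would realise $\pi_G\colon\VPSsbl\to\VPSsbl_G$ as a blow-up. By Corollary~\ref{ref:VSPborderSmooth:cor} and Proposition~\ref{G-smooth} both $\VPSsbl$ and $\VPSsbl_G$ are smooth projective of dimension $6$, the morphism $\pi_G$ is projective, and by Remark~\ref{GoodIsomorphism} it restricts to an isomorphism over the dense open $\VPSgood_G$; hence $\pi_G$ is birational with exceptional locus $\VPSsbl\setminus\VPSgood=\VPSsbl\cap\VPSuns=E_1\sqcup E_2$, where $E_j$ is the component lying over $C_j$. By Corollaries~\ref{linesinquadric} and~\ref{unsaturatedlimits4}, a point of $E_i$ is an ideal $I=I_\Gamma\cap q^\perp$ with $\Gamma$ of length $4$ on a line $L$ of the $i$-th ruling of $Q^{-1}$, and $\pi_G$ sends it to $[L]\in C_i$; thus $\pi_G|_{E_i}\colon E_i\to C_i$ has all fibres $\cong\Hilb_4(L)\cong\PP^4$ and $E_i$ is isomorphic over $C_i$ to the relative Hilbert scheme $\Hilb_4(\mathcal{L}_i/C_i)$ of the universal line $\mathcal{L}_i\to C_i$ of that ruling. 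Since $Q^{-1}\cong\PP^1\times\PP^1$, the universal line of each ruling is the \emph{trivial} $\PP^1$-bundle, so $\mathcal{L}_i\cong C_i\times\PP^1$ and hence $E_i\cong C_i\times\PP^4$ is the trivial $\PP^4$-bundle over $C_i$.

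Now $\codim_{\VPSsbl_G}(C_i)=5$ and all exceptional fibres of $\pi_G$ have dimension $4=5-1$, so by a standard criterion for identifying blow-downs --- a proper birational morphism of smooth varieties whose exceptional locus is a disjoint union of irreducible divisors, each contracted onto a smooth subvariety with all fibres of the expected dimension $\codim-1$, is the blow-up of that subvariety --- the map $\pi_G$ is the blow-up of $\VPSsbl_G$ along $C_1\sqcup C_2$, and $E_i$ is the projectivised (co)normal bundle of $C_i$. Comparing with the previous paragraph, this projectivised bundle over $\PP^1=C_i$ is trivial, hence $N$ (equivalently $N^\vee$) equals $\OO(c)^{\oplus 5}$ for a single integer $c$. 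Finally $5c=\deg N=\deg(T\VPSsbl_G|_{C_i})-\deg(TC_i)=(-K_{\VPSsbl_G}\cdot C_i)-2$; by Corollary~\ref{ref:Fano:cor} (and its proof) $\VPSsbl_G$ is Fano of index $2$ with Picard group generated by the restricted Pl\"ucker class, so $-K_{\VPSsbl_G}\cdot C_i=2\deg C_i=2\cdot 6=12$ by Lemma~\ref{degree6lem}; thus $5c=10$, $c=2$, $N=\OO(2)^{\oplus 5}$, and $T\VPSsbl_G|_{C_i}=\OO(2)^{\oplus 6}$.

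The delicate step is the blow-down: one must invoke a precise form of the contraction criterion and verify its hypotheses --- that each $E_i$ is Cartier (immediate from smoothness of $\VPSsbl$), that $E_1$ and $E_2$ are disjoint (since $C_1\cap C_2=\emptyset$), and the equidimensionality of $\pi_G$ along each $C_i$. If one prefers to bypass it, there is a representation-theoretic alternative: $C_i$ is a closed orbit $\cong\PP^1$ for $SO(q)\cong SO_4$ acting on $\VPSsbl_G$, so $T\VPSsbl_G|_{C_i}$ splits as $\bigoplus_j\OO(a_j)$ with the $a_j$ determined by the weights of a maximal torus $T\subset SO_4$ on the fibre $T_{[I_{L_0}]}\VPSsbl_G$ over the unique $T$-fixed point $[I_{L_0}]$ ($L_0$ a $T$-invariant line of the ruling); one then checks that every such weight equals twice the weight of $T$ on $T_{[I_{L_0}]}C_i$, i.e.\ that each $a_j=2$. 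These weights are accessible from the explicit unsaturated limit ideal together with the orthogonalisation identification of tangent directions of $\VPSsbl_G$ with those in $\Syz$ (Propositions~\ref{smoothMap} and~\ref{ref:smoothness}, as in the proof of Proposition~\ref{prop:GrassSlipSmooth}), reducing the lemma to a finite weight computation.
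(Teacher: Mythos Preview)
Your primary argument is genuinely different from the paper's, which carries out precisely the ``representation-theoretic alternative'' you sketch at the end: the paper exhibits an explicit $\SL_2$ fixing $q$ and transitive on $C_i$, computes the standard-torus weights on the $9$-dimensional space $T_p\VPS_G$ at the fixed point $p$, shows they are all equal to $2$, and reads off $T\VPSsbl_G|_{C_i}=\OO(2)^{\oplus 6}$ (hence the normal bundle) from the equivariant splitting on $\PP^1$.

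Your blow-up route has two gaps. First, the contraction criterion you invoke is not standard in the form stated: knowing merely that every fibre has dimension $\codim-1$ does not force a proper birational morphism of smooth varieties to be the blow-up; the Fujiki--Nakano type statement requires $\OO_{E_i}(E_i)|_{\text{fibre}}\cong\OO(-1)$. This can in fact be recovered here: once $E_i$ is a smooth divisor fibred in $\PP^4$'s over $C_i$, adjunction on a fibre $F$ gives $\OO_{\PP^4}(-5)=K_F=(a+1)\,E_i|_F$ with $a\geq 1$ the discrepancy, so $a+1\mid 5$ forces $a=4$ and $E_i|_F=\OO(-1)$; but that step is absent. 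Second, you assert a \emph{scheme}-theoretic isomorphism $E_i\simeq \Hilb_4(\mathcal{L}_i/C_i)\simeq C_i\times\PP^4$, whereas you have only produced a bijection on closed points; you still need the natural morphism $C_i\times\PP^4\to\VPSsbl$ to be a closed immersion (e.g.\ by checking it is unramified), or some independent reason why $E_i$ is smooth. Both gaps are repairable, after which your degree computation via Corollary~\ref{ref:Fano:cor} and Lemma~\ref{degree6lem} correctly yields $c=2$; the paper's weight computation simply sidesteps both issues.
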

\begin{proof}
    Fix coordinates so that $Q = y_1y_4 + y_2y_3$ and $C$ is the rational
    normal curve containing the point $p = [(x_3, x_4)\cap q^\perp]$.
    Explicitly, we have
    \begin{equation}\label{eq:ideal}
        (x_3, x_4)\cap q^\perp = (x_1x_3, x_2x_3 - x_1x_4, x_3^2, x_2x_4, x_3x_4,
        x_4^2).
    \end{equation}
    Consider the $\SL_2$-action on $k[x_1, \ldots ,x_4]$ where a matrix
    $[a_{ij}]\in \SL_2$ acts by
    \[
        \begin{pmatrix}
            a_{00} & 0 & a_{10} & 0\\
            0 & a_{00} & 0 & -a_{10}\\
            a_{01} & 0 & a_{11} & 0\\
            0 & -a_{01} & 0 & a_{11}
        \end{pmatrix}
    \]
    The quadric $Q$ is invariant under this action and so $\SL_2$ acts on
    $\VPS_G$ and on $C$. Actually, the action on $C$ is transitive; the
    point $p$ is sent to $(x_1a_{01} + x_{3}a_{11}, -x_2a_{01}+x_4a_{11})\cap
    q^\perp$; thus the induced map $\SL_2\to \Aut(C)  \simeq \PGL_2$ is the
    usual one. The action above induces $\SL_2$-linearizations on tangent bundle and normal bundle from the statement.
    The action of the standard torus of $\SL_2$ decomposes the degree two part
    of~\eqref{eq:ideal} into weight spaces
    \[
        \langle x_1x_3, x_2x_3 - x_1x_4, x_2x_4 \rangle,\quad \mbox{and}\quad
        \langle x_3^2, x_3x_4, x_4^2\rangle
    \]
    of weights $0$ and $-2$, respectively.
    The tangent space $T_{p}\VPS_G$ is equal to
    \[
        \Hom_{S}\left( \left((x_3, x_4)\cap q^\perp\right) + S_{\geq 4},\ \frac{q^\perp}{(x_3,
            x_4)\cap q^\perp + S_{\geq 4}} \right)_0,
    \]
    which is $9$-dimensional. Let $W = \langle x_1^2, x_1x_2, x_2^2\rangle\subset q^\perp$, it is a weight space of weight two.
    Any linear map $\rho_2$ from the weight zero space in $(x_3, x_4)\cap q^\perp$ to  
    $W$ has weight two.  To see that $\rho_2$ lifts to a tangent vector $\rho$ it is enough to check that it induces a linear map $\rho_3$ in degree three that is zero on the linear syzygies of $(x_3, x_4)\cap q^\perp$, since the linear syzygies generate all syzygies of $(x_3, x_4)\cap q^\perp$. Since $\rho_2$ has weight two,  it induces, in degree three, a map $\rho_3$ of the weight one space $\langle x_1,x_2 \rangle\cdot\langle x_1x_3, x_2x_3 - x_1x_4, x_2x_4 \rangle$  to a weight three space of cubics.  
  
    But any linear syzygy of $(x_3, x_4)\cap q^\perp$ has weight $\pm 1$ so extending $\rho_2$ to the zero map on the weight $(-2)$ space $\langle x_3^2, x_3x_4, x_4^2\rangle$, means that  $\rho_3$ is zero on the syzygies.  We conclude that $\rho_2$ lifts to a tangent vector $\rho$.
    
    The space of maps $\rho_2$ is
    $9$-dimensional, hence equal to $T_{p}\VPS_G$, the tangent space to $\VPS_G$ at $p$. It
    follows that the weights of the action of the standard torus of $\SL_2$ on
    $T_{p}\VPS_G$ are all equal to $2$.
    The fiber of the tangent bundle from the statement is a subrepresentation
    of $T_{p}\VPS_G$ for this torus, so it also has all weights equal to $2$.
    The fiber of the normal bundle is its quotient representation, so the same
    holds here. By the classification of vector bundles on $\mathbb{P}^1$, we
    find that the tangent bundle is $\OO(2)^{\oplus 6}$, while the normal
    bundle is $\OO(2)^{\oplus 5}$.
\end{proof}

\begin{remark}\label{degreecomp} {\rm The degree and normal bundle of the curves $C_1$ and $C_2$ is also found in the Macaulay2 computation \cite{Mac2S}.
}
\end{remark}
\begin{remark}\label{index2} {\rm The normal bundle of the two curves $C_1\cup C_2=\VPSsbl_G\cap \VPSuns_G$ may be used to compute the index of the latter variety as a Fano variety. The Picard group is generated by $H$, so the canonical divisor is  $K=-aH$ for some integer $a$.   Since the normal bundle  in $\VPSsbl_G$ of each of the two rational curves $C_i$ in $BL$ has degree $10$ while the degree of
each curve is $6$, the adjunction formula implies $-2={\rm deg}K_{C_i}= 10-{\rm deg}(aH\cap C_i)$, so $a=2$ and $\VPSsbl_G$ has Fano index $2$.}

\end{remark}
\begin{proposition}\label{ref:linearSection:prop} Let $n=4$. The subscheme
    $\VPSsbl_G$ inside $\Gr{6}{q^{\perp}_2}$ is an arithmetically Cohen-Macaulay variety and a smooth linear section
$$\VPSsbl_G=\PP^{38}\cap \Gr{6}{q^{\perp}_2},$$
of degree $362$.
The image of the set of unsaturated limit ideals $\VPSsbl_G\cap \VPSuns_G$ form a union $C_1\cup C_2$ of two disjoint rational normal curves each of degree $6$.

A general codimension $6$ linear space containing $C_1\cup C_2$ intersects the locus $\VPSgood_G$ in $310$ points.
\end{proposition}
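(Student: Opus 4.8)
The assertions about $C_1$ and $C_2$ and about smoothness need no further argument: that $\VPSsbl_G\cap\VPSuns_G=C_1\cup C_2$ with the $C_i$ two disjoint rational normal curves of degree $6$ is Lemma~\ref{degree6lem} together with the discussion preceding it, and smoothness of $\VPSsbl_G$ is Proposition~\ref{G-smooth}. For the remaining assertions---arithmetic Cohen--Macaulayness, the equality $\VPSsbl_G=\PP^{38}\cap\Gr{6}{q^{\perp}_2}$, the linear span being a $\PP^{38}$, and the degree $362$---the plan is to compute the homogeneous ideal of $\VPSsbl_G$ in Pl\"ucker coordinates. I would parametrise the dense locus $\VPSgood_G$ by polar simplices: fix $q$ of full rank and send a tuple $[\ell_1],\dots,[\ell_4]$ with $q=\ell_1^2+\cdots+\ell_4^2$ to the degree-two part $(I_\Gamma)_2\subset q^{\perp}_2$ of the ideal of $\Gamma=\{[\ell_1],\dots,[\ell_4]\}$, recorded by its Pl\"ucker coordinates in $\Gr{6}{q^{\perp}_2}\subset\PP(\wedge^{6}q^{\perp}_2)=\PP^{83}$ (here $\dim q^{\perp}_2=9$ and $\binom{9}{6}=84$). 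Since $O(q)$ acts simply transitively on ordered polar simplices, this is a rational map from a $6$-dimensional variety with dense image in $\VPSgood_G$, and since $\pi_G$ is proper the closure of the image is $\VPSsbl_G=\overline{\VPSgood_G}$. From the homogeneous ideal of this closure, computed over $\QQ$ (hence valid in all but finitely many characteristics), one reads off: that the linear forms it contains span a $45$-dimensional space, so the linear span of $\VPSsbl_G$ is $\PP^{83-45}=\PP^{38}$; that the ideal is generated by these $45$ linear forms together with the Pl\"ucker quadrics, which yields $\VPSsbl_G=\PP^{38}\cap\Gr{6}{q^{\perp}_2}$ scheme-theoretically; that the minimal free resolution of the coordinate ring has projective dimension equal to the codimension $38-6=32$, so by Auslander--Buchsbaum the ring is Cohen--Macaulay; and that the degree is $362$. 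These are the computations of Appendices~\ref{sec:computerPart} and~\ref{vsp4package}; cf.\ Remark~\ref{degreecomp}.

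For the final count I would use a residual (excess) intersection calculation on $X:=\VPSsbl_G$, which by Theorem~\ref{ref:reducibility:thm} is irreducible, of dimension $6$ and degree $362$, and satisfies $X\setminus(C_1\cup C_2)=\VPSgood_G$. A general codimension-$6$ linear space $\Lambda\cong\PP^{32}$ in $\PP^{38}$ containing $C_1\cup C_2$ meets $X$ in $C_1\cup C_2$ together with a finite set $F$; a Bertini-type argument---using that $C_1\sqcup C_2$ is exactly the boundary $X\cap\VPSuns_G$ and that $X$ is smooth---shows that for general $\Lambda$ the set $F$ is reduced and lies in $\VPSgood_G$. By the residual intersection formula,
\[
    362=\deg X=\#F+\sum_{i=1,2}\int_{C_i}c\!\left(N_{\Lambda/\PP^{38}}|_{C_i}\right)\cap s(C_i,X),
\]
where $N_{\Lambda/\PP^{38}}|_{C_i}\simeq\OO_{\PP^1}(6)^{\oplus 6}$ because $C_i\subset\Lambda$ is a degree-$6$ rational normal curve, and $s(C_i,X)=c(N_{C_i/X})^{-1}\cap[C_i]$ with $N_{C_i/X}\simeq\OO_{\PP^1}(2)^{\oplus 5}$ by Lemma~\ref{ref:normalBundle:lem}. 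On $\PP^1$ one has $c(\OO(6)^{\oplus 6})=1+36[\mathrm{pt}]$ and $c(\OO(2)^{\oplus 5})^{-1}=1-10[\mathrm{pt}]$, so the dimension-zero part of $c(N_{\Lambda/\PP^{38}}|_{C_i})\cap s(C_i,X)$ equals $(36-10)[\mathrm{pt}]$; hence each summand is $26$ and $\#F=362-52=310$.

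The main obstacle is the computational part. The homogeneous ideal of $\VPSsbl_G$ lives in a polynomial ring in $84$ variables, and certifying arithmetic Cohen--Macaulayness requires producing a minimal free resolution of length $32$ (after eliminating the $45$ linear forms, inside the $39$-variable ring of $\PP^{38}$), which is a heavy computation best carried out over a finite field. The secondary, more conceptual point is the Bertini argument that for a general $\Lambda\supseteq C_1\cup C_2$ the residual set $F$ is finite, reduced, and contained in $\VPSgood_G$, so that $C_1$ and $C_2$ are the only excess components and absorb exactly their Segre contributions $26$ each; this should be routine given that $C_1\sqcup C_2=\VPSsbl_G\cap\VPSuns_G$.
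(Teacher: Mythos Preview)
Your excess-intersection computation is correct and matches the paper's: each $C_i$ contributes $36-10=26$, for a total of $52$. The smoothness and the description of $C_1\cup C_2$ are indeed handled by the cited results.

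The substantive difference is in the logical flow and the computational strategy. You propose to compute the homogeneous ideal of $\VPSsbl_G$ directly as the closure of the parametrised image, then read off that it is generated by $45$ linear forms plus the Pl\"ucker quadrics, and verify Cohen--Macaulayness from a length-$32$ free resolution. The paper explicitly reports that this direct route was \emph{not} computationally feasible: in Appendix~\ref{vsp4package} the authors say they ``could not check directly that the homogeneous ideal of the image of the $6$-dimensional family is generated by quadrics.'' So the step where you say ``the ideal is generated by these $45$ linear forms together with the Pl\"ucker quadrics, which yields $\VPSsbl_G=\PP^{38}\cap\Gr{6}{q^{\perp}_2}$'' is precisely the step the paper could not carry out by brute force.

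Instead, the paper works in the reverse direction. It computes only the \emph{linear span} of $\VPSsbl_G$ (this much is accessible), sets $X:=\PP^{38}\cap\Gr{6}{q^{\perp}_2}$, and shows that $X$ is arithmetically Cohen--Macaulay of degree $362$ --- not via a free resolution, but by exhibiting a carefully chosen sparse regular sequence of linear forms and checking saturatedness at each step. At this point one only knows $\VPSsbl_G\subseteq X$. The equality is then forced by a degree comparison: the number $310$ is taken as \emph{input} from~\cite[Theorem~6.3]{RS} (six general special Pl\"ucker hyperplanes $H_h$ meet $\VPSgood_G$ in $310$ polar simplices), and the excess contribution $52$ along $C_1\cup C_2$ gives $\deg\VPSsbl_G=310+52=362=\deg X$. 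Since $X$ is Cohen--Macaulay it is unmixed, so $\VPSsbl_G$ being a $6$-dimensional component of the same degree forces $\VPSsbl_G=X$.

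Thus where you treat $310$ as the output of the excess formula (derived from $362$), the paper treats $310$ as the input (from~\cite{RS}) that pins down $362$ and hence the linear-section equality. Your approach is logically sound but runs into a genuine computational wall; the paper's indirection via~\cite{RS} and the regular-sequence test for Cohen--Macaulayness is what makes the argument actually go through.
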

\begin{proof}
It remains to prove the linear section and Cohen-Macaulay properties and the degree computations.  This involves the Macaulay2 computation \cite{Mac2S}, see Theorem \ref{appendix:B1} in Appendix \ref{vsp4package}. 
It proves that the linear section $X:=\PP^{38}\cap \Gr{6}{q^{\perp}_2}$, where $\PP^{38}$ is the linear span of $\VPSsbl_G$, is a $6$-dimensional arithmetically Cohen-Macaulay variety of degree $362$.

Now, we argue that the degree of $X$ coincides with the degree of $\VPSsbl_G$, so that $\VPSsbl_G$ is the only component of maximal dimension in $X$.  But, since $X$ is arithmetically Cohen-Macaulay, every component is maximal, so $X$ and $\VPSsbl_G$ coincide if their degree does.

The degree argument uses excess intersection.  Let $L$ be a general linear space of codimension $6$ that contain the two curves $C_1\cup C_2$.  The excess intersection of the two curves in the intersection $L\cap \VPSsbl_G$ is computed by
the formula \cite[Prop 9.1.1.(2)]{Fulton}:
$${\rm deg}(L\cdot \VPSsbl_G)^{C_i}={\rm deg}(c_1(N_L))-c_1(N_{C_i})=6(H\cdot C_i)-10=6^2-10,$$
where $N_L$ is the normal bundle of $L$ restricted to $C_i$ and $H$ is the class of a hyperplane.
So the excess intersection along $C_1\cup C_2$ has degree $2(6^2-10)=52$. 
By the below remark, the linear space $L$ intersects $$\VPSgood_G=\VPSsbl_G\setminus (C_1\cup C_2)$$ in $310$ points. So the intersection of  $\VPSsbl_G$ with a general linear space of codimension $6$ is $310+52=362$, which coincides with the computation of the degree of $X$.
\end{proof}

\begin{remark}\label{excess}  {\rm
In \cite{RS} special Pl\"ucker hyperplanes are considered in a computation of $\VPSsbl_G$. They already appeared in the proof of  Corollary \ref{ref:Fano:cor}.
For a hyperplane $h=V(l)\subset \PP(T_1)$, the set
$$H_h=\{[I_2]\in \VPSsbl_G\ |\ V(I_2)\cap h\not= \emptyset\}$$
is a  Pl\"ucker hyperplane section of  $\VPSsbl_G$.
Since $V(I_2)$ contains a line for every unsaturated limit ideal $I$, every divisor $H_h$ contains the image of all unsaturated limit
ideals.
According to \cite[Theorem 6.3]{RS}, when $n=4$, the intersection of six general hyperplanes $H_h$ contains $310$ ideals of
polar simplices, i.e. points on $\VPSgood_G$.  That theorem claims that this is also the degree of $\VPSgood_G$, based on the false assumption that this variety is closed. The locus $$\VPSsbl_G\setminus \VPSgood_G=C_1\cup C_2$$ which is contained in every hyperplane $H_h$.
The intersection of six general hyperplane sections $H_h$ therefore contains $C_1\cup C_2$ in addition to the $310$ points on $\VPSgood_G$. So to compute the degree of $\VPSsbl_G$ using the formula in \cite[Theorem 6.3]{RS}, becomes the excess intersection problem of finding the contribution of $C_1\cup C_2$ to the degree of the intersection.
}
\end{remark}

\appendix
\section{Computer code}\label{sec:computerPart}

In this section we exhibit the computer code used to prove smoothness of
specific points in $\VPSsbl_G$ for $n=4,5$. The computations are performed
over $\QQ$. 
See
package~\cite{Ilten} for details.
\begin{verbatim}
loadPackage("VersalDeformations", Reload=>true);
computeComponents = I -> (
    -- check that Syz and Hilb agree
    assert(hilbertFunction(3, I) == dim ring I);
    J = ideal select(flatten entries mingens I, x->sum degree x<=2);
    -- the ideal I_2 + S_{\geq 4}
    Jtr = ideal mingens(J + (ideal gens ring J)^4);
    tg = normalMatrix(0, Jtr);
    reducedtg = CT^1(0, Jtr);
    ob = CT^2(0, Jtr);
    (F,R,G,C) = versalDeformation(mingens Jtr,reducedtg,ob);
    IG = ideal mingens ideal sum G;
    pd = primaryDecomposition IG;
    assert(#pd == 2);
    -- actually, those are linear spaces
    assert(dim ideal singularLocus pd_0 == -1);
    assert(dim ideal singularLocus pd_1 == -1);
    use ring I; -- reset the default ring
    return (rank source tg - codim pd_0, 
                            rank source tg - codim pd_1);
);
\end{verbatim}
The code for $n=4$ is as follows.
\begin{verbatim}
QQ[x_0 .. x_3];
Q = x_0*x_2 + x_1*x_3;
Qperp = inverseSystem(Q);
I = intersect(Qperp, ideal(x_1^4, x_2, x_3)); --case n=4
computeComponents(I) == (12,10) -- true
\end{verbatim}
The code for $n=5$ is very similar.
\begin{verbatim}
QQ[z_0 .. z_4];
Qz = z_0*z_2 + z_1*z_3 + z_4^2;
Qzperp = inverseSystem(Qz);
I = ideal(z_3*z_4, z_2*z_4, z_0*z_4, z_3^2, z_2*z_3, z_1*z_3,
    z_0*z_3-z_4^2, z_2^2, z_1*z_2 - z_4^2, z_0*z_2, z_0^4);
computeComponents(I) == (20,20) -- true
I = intersect(ideal(z_0, z_3, z_4, z_1^5),
    inverseSystem(z_4*z_2^2), inverseSystem(Qz));
computeComponents(I) == (20,20) -- true
\end{verbatim}
The code for $n=4$ is as follows.
\begin{verbatim}
QQ[x_1 .. x_4];
Q = x_1*x_3 + x_2*x_4;
Qperp = inverseSystem(Q);
I = intersect(Qperp, ideal(x_2^4, x_3, x_4)); --case n=4
computeComponents(I) == (12,10) -- true
\end{verbatim}
The code for $n=5$ is very similar.
\begin{verbatim}
QQ[x_1 .. x_5];
Qz = x_1*x_3 + x_2*x_4 + x_5^2;
Qzperp = inverseSystem(Qz);
I = ideal(x_4*x_5, x_2*x_5, x_1*x_5, x_4^2, x_1*x_4,x_3*x_4, 
    2*x_2*x_4-x_5^2, x_1^2,x_1*x_2, 2*x_1*x_3 - x_5^2,  x_2^4);
computeComponents(I) == (20,20) -- true
I = intersect(ideal(x_1, x_4, x_5, x_2^5),
    inverseSystem(x_5*x_3^2), inverseSystem(Qz));
computeComponents(I) == (20,20) -- true
\end{verbatim}

\section{Remarks on the package vsp4.m2}\label{vsp4package}

In our package we prove computationally the following theorem.
The computations are performed over $\QQ$.

 \begin{theorem}\label{appendix:B1} Consider $\VPSsbl$ with $H=(1,4,4,...)$. The Grassmannian model  
 $$\VPSsbl_G \subset \Gr{6}{q^{\perp}_2} \subset \PP^{\binom{9}{6}-1}$$
 is smooth. Then $\VPSsbl_G$ is the intersection of $\Gr{6}{q^{\perp}_2}$ with the linear span of
 $\VPSsbl_G$ inside  $\PP^{\binom{9}{6}-1}$.
 It is an arithmetically Cohen-Macaulay variety of degree $362$ with h-vector
 $(1,32,148,148,32,1)$.
 \end{theorem}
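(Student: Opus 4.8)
The statement is computational in nature; moreover smoothness of $\VPSsbl_G$ is already established independently in Proposition~\ref{G-smooth} (together with Proposition~\ref{prop:GrassSlipSmooth} at the boundary), so the genuinely new content is the realization of $\VPSsbl_G$ as a linear section of $\Gr{6}{q^{\perp}_2}$, its arithmetic Cohen--Macaulayness, its degree $362$ and its $h$-vector. The plan is to pin down the homogeneous ideal of the linear span $\Lambda := \langle \VPSsbl_G\rangle$ explicitly, to set $X := \Lambda \cap \Gr{6}{q^{\perp}_2}$ and study $X$ as an abstract projective scheme via a minimal free resolution of its coordinate ring, and finally to identify $X$ with $\VPSsbl_G$.

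Concretely, fix $q = y_1y_4 + y_2y_3$ as in Lemma~\ref{ref:normalBundle:lem}. First I would assemble a large supply of $\QQ$-points of $\VPSgood_G$ as images under $\pi_G$ of random reduced length-$4$ subschemes $\Gamma\subseteq\PP^3$ apolar to $Q$ (random polar simplices), together with the two rational normal sextic curves $C_1, C_2$ with $C_1\cup C_2 = \VPSsbl_G\cap\VPSuns_G$ from Lemma~\ref{degree6lem}; linear interpolation through these data then produces $\Lambda$, which turns out to be a $\PP^{38}$ inside $\PP^{\binom{9}{6}-1} = \PP^{83}$. Next I would compute a minimal free resolution of the homogeneous coordinate ring of $X = \Lambda\cap\Gr{6}{q^{\perp}_2}$: from it one reads that $\codim X = 32$ and that the resolution has length $32$, so $X$ is arithmetically Cohen--Macaulay; that $\dim X = 6$; and that the numerator of the Hilbert series is $1 + 32t + 148t^2 + 148t^3 + 32t^4 + t^5$, so the $h$-vector is $(1,32,148,148,32,1)$ and $\deg X = \sum_i h_i = 362$.

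To see that $X$ equals $\VPSsbl_G$, not merely contains it, note that $X$ is arithmetically Cohen--Macaulay, hence equidimensional of dimension $6$; since $\VPSsbl_G$ is an irreducible $6$-dimensional closed subscheme of $X$, it is one of the top-dimensional components of $X$, and it suffices to match degrees. The computation gives $\deg X = 362$, while $\deg\VPSsbl_G = 362$ as well: a general codimension-$6$ linear space through $C_1\cup C_2$ meets $\VPSgood_G$ in $310$ points by~\cite[Theorem~6.3]{RS}, and the excess-intersection contribution of $C_1\cup C_2$, computed from their normal bundle $\OO(2)^{\oplus 5}$ of Lemma~\ref{ref:normalBundle:lem} via~\cite[Prop.~9.1.1]{Fulton}, equals $2(6^2 - 10) = 52$, with $310 + 52 = 362$. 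Hence $\VPSsbl_G$ accounts for all of $\deg X$, so $X = \VPSsbl_G$; this is the linear-section statement, and the Cohen--Macaulay property, the degree and the $h$-vector carry over. (The resulting coordinate ring is then a Cohen--Macaulay domain with symmetric $h$-vector, hence Gorenstein by Stanley's criterion, consistently with $\VPSsbl_G$ being Fano of index $2$, Corollary~\ref{ref:Fano:cor}.) Smoothness need not be re-derived, being Proposition~\ref{G-smooth}, but it can be double-checked on this explicit model by a Jacobian computation, whose scope one restricts using $\GL_4$-homogeneity on the dense good locus together with the finitely many boundary normal forms already handled in Appendix~\ref{sec:computerPart}.

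The main obstacle is the sheer size of the computation: $\Gr{6}{q^{\perp}_2}$ sits in $\PP^{83}$ with its quadratic Plücker relations, and resolving $X$ even inside the span $\PP^{38}$ is heavy over $\QQ$. The practical remedies are to run the Hilbert-function and resolution computations modulo a large prime and then certify the lift, so that the conclusion holds in all but finitely many characteristics (as for the package~\cite{Mac2S} and in the spirit of~\cite{Ilten}); to exploit the $\SL_2$- and $\GL_4$-equivariance used elsewhere to cut down random linear sections; and to read $\dim X$, $\deg X$ and the $h$-vector off a general artinian reduction of the coordinate ring---a $0$-dimensional scheme of length $362$---rather than off the full ideal. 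Verifying the arithmetic Cohen--Macaulay property, namely that the resolution terminates at homological degree $32$ with the predicted Betti numbers, is the part that dominates the running time.
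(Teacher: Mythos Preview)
Your high-level strategy coincides with the paper's: determine the linear span $\Lambda=\PP^{38}$, set $X=\Lambda\cap\Gr{6}{q^{\perp}_2}$, verify that $X$ is arithmetically Cohen--Macaulay of degree $362$ with $h$-vector $(1,32,148,148,32,1)$, and then identify $X$ with $\VPSsbl_G$ by the excess-intersection count $310+52=362$ exactly as in Proposition~\ref{ref:linearSection:prop}. The degree-matching step and the appeal to equidimensionality of ACM schemes are identical to the paper's argument.

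The substantive divergence is in the two computational tactics. First, for producing $\Lambda$ and the family: you interpolate through random polar simplices, whereas the paper works from the unfolding (versal deformation) of a single boundary ideal, obtaining an explicit two-component base space and a $6\times 9$ matrix giving the map to the Grassmannian; this structured parameterization is what lets them check smoothness outside $C_1\cup C_2$ directly and read off the normal bundles. Second, and more importantly, for the ACM property: you propose computing the full minimal free resolution in codimension $32$ and invoking Auslander--Buchsbaum. The paper reports that this direct route (even checking that the ideal of the image is generated by the $380$ quadrics) was out of reach, and instead establishes ACM by exhibiting a carefully chosen \emph{sparse} sequence $\ell_1,\ldots,\ell_6$ of linear forms such that each partial cut $J+(\ell_1,\ldots,\ell_i)$ remains saturated, thereby producing a regular sequence of length $7$ on the homogeneous coordinate ring. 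Your own remedy of passing to an artinian reduction recovers the degree and $h$-vector but does not by itself certify ACM; the paper's saturation test is precisely the missing ingredient that replaces the length-$32$ resolution you flag as the bottleneck.
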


The proof is computational.
The first step consists in computing the
unfolding~\cite{Hauser_Sings} of the ideal
$\left({x}_{0}^{2},{x}_{0}{x}_{1},{x}_{1}^{2},{x}_{0}{
     x}_{2}-{x}_{1}{x}_{3},{x}_{1}{x}_{2},{x}_{0}{x}_{3},{x
     }_{2}^{4}\right)$
and the computation of the flatness relations via a Gr\"obner basis computation. As it turns out the base space of the family
consists of two components of dimension $8$ and $6$ respectively. The $8$-dimensional family consists of lines in $\PP^3$ together with 4 points. The general element of the 6-dimensional family consists of 4 distinct points not on a line, they vary as sets of points apolar to $q$.
The intersection of these families consists of 4 points on a line on the inverse quadric $Q^{-1}$. So the intersection has two components corresponding to the two family of lines on $Q^{-1}$. They are of dimension $5$.

In the next step we compute the image of these families in the Grassmannian  $\Gr{6}{q^{\perp}_2}$. It is computational
accessible to compute the linear equations in $\PP^{\binom{9}{6}-1}$. 
To see that image of the 8-dimensional family coincides with the $3$-uple embedding of $\Gr{2}{4} \subset \PP^5$ is straightforward. 

Analyzing the image of the $6$-dimensional family is computationally difficult.  
The linear span is a $\PP^{38}$ and the 1050 Pl\"ucker quadrics restrict to $380$ independent quadrics on this $\PP^{38}$.
Moreover, the quadrics define a saturate ideal $J$ of dimension $7$ and degree $362$. So $J$ corresponds to a 6-dimensional  scheme $X$ of degree $362$. The 6-th difference function of the Hilbert function of $X$ takes values $(1,32,148,148,32,1)$.

This lead us to conjecture that $X$ is an arithmetically Gorenstein subscheme of $\PP^{38} \subset \PP^{\binom{9}{6}-1}$.
Note that $\binom{31+2}{2} = 148 +380$. Thus if $X$ is arithmetically
Gorenstein, its homogeneous ideal in $\PP^{38}$ is  generated by these 380 quadrics. However we could not check directly that the homogeneous ideal of the image of the 6-dimensional family is generated by quadrics.

Instead we establish that $X$ is arithmetically Cohen-Macaulay by testing that for a carefully chosen sparse sequence 
$\ell_1,\ldots,\ell_6$ of linear forms the ideals $J+(\ell_1,\ldots,\ell_i)$ are still saturated for $i=1 \ldots,6$.
Since $\dim (J+(\ell_1,\ldots,\ell_6))=1$ (which we check computationally) the
ideal  $J+(\ell_1,\ldots,\ell_6)$ defines the homogeneous ideal  of a zero dimensional subscheme of a $\PP^{32}$ of degree $362$. Thus there exists a final linear form $\ell_7$ such that $\ell_1,\ldots,\ell_7$ are a regular sequence for the homogeneous coordinate ring of $X$. 

This proves that $X$ is arithmetically Cohen-Macaulay and therefore unmixed.

In Proposition \ref{ref:linearSection:prop} above, we apply excess intersection theory to show that the two varieties $X$ and $\VPSsbl_G$ have the same degree, hence must coincide.

By the Macaulay2 computation, the image of the intersection of the two families consists of two rational normal curves $C_1,C_2$ of degree $6$, as above.  Furthermore it checks that $X$ is smooth along the normal curves $C_i \cong \PP^1$ and  that 
their normal bundles are isomorphic to $\oplus^5_{1} \OO_{\PP^1}(2)$, in accordance with Lemmas \ref{degree6lem} and \ref{ref:normalBundle:lem}.
Finally, by inspecting the $6 \times 9$ matrix which defines the map of the 6-dimensional family into the Grassmannian we see that $\VPSsbl_G$ is smooth outside $C_1 \cup C_2$.


\begin{thebibliography}{HJNY21}

\bibitem[Art76]{artin_deform_of_sings}
Michael Artin.
\newblock {\em Deformations of singularities}.
\newblock Notes by C.S. Seshadri and Allen Tannenbaum. Tata Institute of
  Fundamental Research, Bombay, India, 1976.

\bibitem[BB14]{BB10}
Weronika Buczy\'{n}ska and Jaros{\l}aw Buczy\'{n}ski.
\newblock Secant varieties to high degree veronese reembeddings, catalecticant
  matrices and smoothable gorenstein schemes.
\newblock {\em Duke Math. J.}, 23(1):63--69, 2014.

\bibitem[BB21]{BB}
Weronika Buczy\'{n}ska and Jaros{\l}aw Buczy\'{n}ski.
\newblock Apolarity, border rank, and multigraded {H}ilbert scheme.
\newblock {\em Duke Math. J.}, 170(16):3659--3702, 2021.

\bibitem[BM18]{Bolognesi_Massarenti}
Michele Bolognesi and Alex Massarenti.
\newblock Varieties of sums of powers and moduli spaces of {$(1,7)$}-polarized
  abelian surfaces.
\newblock {\em J. Geom. Phys.}, 125:23--32, 2018.

\bibitem[CCO17]{Carlina_Catalisano_Oneto}
Enrico Carlini, Maria~Virginia Catalisano, and Alessandro Oneto.
\newblock Waring loci and the {S}trassen conjecture.
\newblock {\em Adv. Math.}, 314:630--662, 2017.

\bibitem[CJN15]{cjn13}
Gianfranco Casnati, Joachim Jelisiejew, and Roberto Notari.
\newblock Irreducibility of the {G}orenstein loci of {H}ilbert schemes via ray
  families.
\newblock {\em Algebra Number Theory}, 9(7):1525--1570, 2015.

\bibitem[CN09]{casnati_notari_irreducibility_Gorenstein_degree_9}
Gianfranco Casnati and Roberto Notari.
\newblock On the {G}orenstein locus of some punctual {H}ilbert schemes.
\newblock {\em J. Pure Appl. Algebra}, 213(11):2055--2074, 2009.

\bibitem[Erm12]{erman_Murphys_law_for_punctual_Hilb}
Daniel Erman.
\newblock Murphy's law for {H}ilbert function strata in the {H}ilbert scheme of
  points.
\newblock {\em Math. Res. Lett.}, 19(6):1277--1281, 2012.

\bibitem[Ful98]{Fulton}
William Fulton.
\newblock {\em Intersection theory}, volume~2 of {\em Ergebnisse der Mathematik
  und ihrer Grenzgebiete. 3. Folge. A Series of Modern Surveys in Mathematics
  [Results in Mathematics and Related Areas. 3rd Series. A Series of Modern
  Surveys in Mathematics]}.
\newblock Springer-Verlag, Berlin, second edition, 1998.

\bibitem[GRV18]{Gallet_Ranestad_Villamizar}
Matteo Gallet, Kristian Ranestad, and Nelly Villamizar.
\newblock Varieties of apolar subschemes of toric surfaces.
\newblock {\em Ark. Mat.}, 56(1):73--99, 2018.

\bibitem[GS]{M2}
Daniel Grayson and Michael Stillman.
\newblock Macaulay2, a software system for research in algebraic geometry, ver.
  1.21.
\newblock Available at \url{https://macaulay2.com}.

\bibitem[Hau83]{Hauser_Sings}
Herwig Hauser.
\newblock An algorithm of construction of the semiuniversal deformation of an
  isolated singularity.
\newblock In {\em Singularities, {P}art 1 ({A}rcata, {C}alif., 1981)},
  volume~40 of {\em Proc. Sympos. Pure Math.}, pages 567--573. Amer. Math.
  Soc., Providence, R.I., 1983.

\bibitem[HJNY21]{hoyois2021hermitian}
Marc Hoyois, Joachim Jelisiejew, Denis Nardin, and Maria Yakerson.
\newblock Hermitian k-theory via oriented gorenstein algebras, 2021.

\bibitem[HMV20]{Huang_Michalek_Ventura}
Hang Huang, Mateusz Micha{\l}ek, and Emanuele Ventura.
\newblock Vanishing {H}essian, wild forms and their border {VSP}.
\newblock {\em Math. Ann.}, 378(3-4):1505--1532, 2020.

\bibitem[HS04]{Haiman_Sturmfels__multigraded}
Mark Haiman and Bernd Sturmfels.
\newblock Multigraded {H}ilbert schemes.
\newblock {\em J. Algebraic Geom.}, 13(4):725--769, 2004.

\bibitem[Ilt12]{Ilten}
Nathan~Owen Ilten.
\newblock Versal deformations and local {H}ilbert schemes.
\newblock {\em J. Softw. Algebra Geom.}, 4:12--16, 2012.

\bibitem[JM22]{JM}
Joachim Jelisiejew and Tomasz Ma{\'{n}}dziuk.
\newblock Limits of saturated ideals.
\newblock 2022.
\newblock arXiv:2210.13579.

\bibitem[Muk92]{Mukai}
Shigeru Mukai.
\newblock Fano 3-folds.
\newblock {\em London Math. Soc., Lect. Notes Ser.}, 179:255--263, 1992.

\bibitem[RS00]{ranestad_schreyer_VSP}
Kristian Ranestad and Frank-Olaf Schreyer.
\newblock Varieties of sums of powers.
\newblock {\em J. Reine Angew. Math.}, 525:147--181, 2000.

\bibitem[RS13]{RS}
Kristian Ranestad and Frank-Olaf Schreyer.
\newblock The variety of polar simplices.
\newblock {\em Doc. Math.}, 18:469--505, 2013.

\bibitem[RV17]{Ranestad_Voisin}
Kristian Ranestad and Claire Voisin.
\newblock Variety of power sums and divisors in the moduli space of cubic
  fourfolds.
\newblock {\em Doc. Math.}, 22:455--504, 2017.

\bibitem[Sch23]{Mac2S}
Frank-Olaf Schreyer.
\newblock vsp4, a {M}acaulay2 package for the computation of the variety of
  polar simplices of a quadric in {$P^3$}.
\newblock
  \url{https://www.math.uni-sb.de/ag/schreyer/index.php/computeralgebra}, 2023.

\bibitem[SSS22]{Skjelnes_Stahl}
Roy~Mikael Skjelnes and Gustav S{\ae}d\'{e}n~St{\aa}hl.
\newblock Explicit projective embeddings of standard opens of the {H}ilbert
  scheme of points.
\newblock {\em J. Algebra}, 590:254--276, 2022.

\bibitem[{Sta}23]{stacks-project}
The {Stacks project authors}.
\newblock The stacks project.
\newblock \url{https://stacks.math.columbia.edu}, 2023.

\end{thebibliography}
\end{document}